\definecolor{collnk}{rgb}{.2,0.2,.6}
\definecolor{colcit}{rgb}{.1,0.5,.2}
\renewcommand{\a}{\alpha}
\renewcommand{\b}{\beta}
\newcommand{\C}{\mathbb{C}}
\newcommand{\R}{\mathbb{R}}
\newcommand{\N}{\mathbb{N}}
\newcommand{\D}{\mathbb{D}}
\newcommand{\Z}{\mathbb{Z}}
\newcommand{\Int}{\int_{-\infty}^\infty}
\DeclareMathOperator{\Log}{Log}
\DeclareMathOperator{\Arctan}{Arctan}
\DeclareMathOperator{\Rea}{Re}
 \DeclareMathOperator{\Ima}{Im}
\renewcommand{\Int}{\int_{-\infty}^\infty}
\newtheorem{thm}{Theorem}[section]
\newtheorem{lem}[thm]{Lemma}
\newtheorem{prop}[thm]{Proposition}
\newtheorem{rem}[thm]{Remark}
\newtheorem{cor}[thm]{Corollary}
\title{A family of entire functions connecting the Bessel function $J_1$ and the Lambert $W$ function}
\author{Christian Berg, Eugenio Massa and Ana P. Peron}
\begin{document}

\maketitle


\begin{abstract}
Motivated by the problem of determining the values of $\a>0$ for which $f_\a(x)=e^\a - (1+1/x)^{\a x},\  x>0$ is a completely monotonic function, we combine Fourier analysis with complex analysis to find a family $\varphi_\a$, $\a>0$, of entire functions such that $f_\a(x) =\int_0^\infty e^{-sx}\varphi_\a(s)\,ds, \ x>0.$

We show that each function $\varphi_\a$ has an expansion in power series, whose coefficients are determined in terms of  Bell polynomials. This expansion leads to  several properties of the functions $\varphi_\a$, which turn out to be related to the well known  Bessel function $J_1$ and the Lambert $W$ function. 

On the other hand, by numerically evaluating the series expansion, we are able to show the
 behavior of $\varphi_\a$ as $\a$ increases from $0$ to $\infty$ and to obtain a very precise approximation of the largest $\a>0$ such that  $\varphi_\a(s)\geq0,\, s>0$, or equivalently, such that $f_\a$ is completely monotonic. 

\par \bigskip \noindent
{\it AMS Subject Classification:} 26A48, 30E20, 42A38, 33F05.
\\{\it Keywords:}  completely monotonic function, complex analysis, Fourier analysis, Stieltjes moment sequence, Bell polynomials.

\end{abstract}

\medskip
\section{Introduction and main results}\label{sec:intro}
A completely monotonic function is an infinitely differentiable function $f:\left]0,\infty\right[\to\R$ such that
$$
(-1)^nf^{(n)}(x)\ge 0,\quad x>0,\;n=0,1,\ldots,
$$
and a Bernstein function is an infinitely differentiable function $f:\left]0,\infty\right[\to\R$ such that
$f(x)\ge 0$ for $x>0$ and $f'$ is completely monotonic. Both classes of functions are treated in
\cite{B:F} and \cite{S:S:V}. The only completely monotonic functions, which are also Bernstein functions,  are the non-negative constant functions.

Let $\a>0, \b\in\R$. In \cite[p. 457]{A:B} it was proved that $(1+\a/x)^{x+\b}-e^\a$ is completely monotonic if and only if $\a\le 2\b$. This was sharpened in \cite{Q:N:C} to a proof that $(1+\a/x)^{x+\b}$ is logarithmically completely monotonic if and only if $\a\le 2\b$. Monotonicity properties of $(1+\a/x)^{x+\b}$ when $\a<0$ has been examined in
\cite{Q:L:G} and \cite{G:Q}.

For $\a>0$ define
\begin{equation*} 
f_\a(x)=e^\a - h_\a(x),\quad h_\a(x)=(1+1/x)^{\a x},\quad x>0.
\end{equation*}

In \cite[p. 458]{A:B} it was left as an open problem to determine the values of $\a>0$ for which
$e^\a-(1+\a/x)^x$ is completely monotonic or equivalently 
 $f_\a$ is completely monotonic. It was proved that $f_\a$ is completely monotonic for $0<\a\le 1$, and the question was, if $f_\a$ is completely monotonic for some $\a>1$. In \cite{B} the problem was given the equivalent formulation of determining the set of values  $\a>0$ such that $h_\a$ is a Bernstein function. It was noticed in \cite{B} that  $h_1$ is  a Bernstein function, because $f_1$ is completely monotonic, but $h_3$ is not a Bernstein function. Because of the fact that if $f$ is a Bernstein function, then so is $f^c$ for $0<c<1$, and the fact that the set of Bernstein functions is closed under pointwise convergence, the set in question is of the form $]0,\a^*]$, where $\a^*$ is an unknown number in the interval $[1,3[$. From graphs it looked probable that $\a^*>2$. 

In \cite{S:K:J} it was established numerically that $\a^*\approx 2.29965\,6443.$ This was done looking at monotonicity properties of high order derivatives of $f_\a$. More precisely, defining
$$
f(x,\a,n):=(-1)^nf_\a^{(n)}(x),\quad n=0,1,\ldots
$$
and letting $\a_n,x_n$ be determined as the "smallest positive solutions" to 
$$
f(x_n,\a_n,n+1)=f(x_n,\a_n,n+2)=0,
$$
then $\a_n$ decreases to $\a^*$. The estimate for $\a^*$ is then obtained from approximate values of $\a_n$ for
certain $n$ up to $n=10^5$.

In this paper we shall combine Fourier analysis with complex analysis to find a family of entire functions
 $\varphi_\a, \a>0$ such that
\begin{equation}\label{eq:Lap}
f_\a(x)=\int_0^\infty e^{-sx}\varphi_\a(s)\,ds, \quad x>0.
\end{equation} 
By a theorem of Bernstein, cf. \cite[p.160]{W}, this formula shows that $f_\a$ is completely monotonic if and only if $\varphi_\a(s)\ge 0$ for all $s>0$ and therefore $\a^*$ is determined as  the largest $\a>0$ such that $\varphi_\a(s)\ge 0$ for all $s>0$.

It turns out that our calculations leading to \eqref{eq:Lap} are valid for all complex $\a$, and for such $\a$   
we define
$$
f_\a(z)=e^\a-h_\a(z),\;
$$
\begin{equation}\label{eq:f1z}
 h_\a(z)=(1+1/z)^{\a z}:=\exp(\a z\Log(1+1/z)),\quad z\in\mathcal A,
\end{equation}
where $\mathcal A:=\C\setminus]-\infty,0]$ denotes the cut plane, and $\Log$ is the principal logarithm defined in $\mathcal A$.

The functions $\varphi_\a$ are given as contour integrals in the following theorem:
\begin{thm}\label{thm:intro1} Let $c>1, r>0$ be fixed,  and let $C(r,c)$ denote the rectangle with corners $-c\pm ir, \pm ir$ considered as a closed contour with positive orientation. Then for $\a\in\C$
\begin{equation}\label{eq:phi1}
\varphi_\a(s):=\frac{1}{2\pi i}\int_{C(r,c)}f_\a(z)e^{sz}\,dz,\quad s\in\C
\end{equation} 
is an entire function, which is independent of $c>1, r>0$, and \eqref{eq:Lap} holds for all $\a\in\C$.
Moreover $\varphi_\a(s)$ is bounded for $s\in [0,\infty[$ and tends to 0 for $s\to\infty$.
\end{thm}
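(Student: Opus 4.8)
The plan is to treat the four assertions---holomorphy in $s$, independence of the contour, the Laplace representation \eqref{eq:Lap}, and the asymptotic behaviour---in turn, the third being the heart of the matter. First I would record that $h_\a$, and hence $f_\a$, is holomorphic on $\C\setminus[-1,0]$: writing $1+1/z=(z+1)/z$, the principal logarithm $\Log(1+1/z)$ is singular exactly when $(z+1)/z\in\,]-\infty,0]$, i.e.\ for $z\in[-1,0[$, while near the origin $z\Log(1+1/z)\to0$ shows that the singularity at $z=0$ is removable with $f_\a(0)=e^\a-1$. Consequently the rectangle $C(r,c)$, whose left edge lies at $\Rea z=-c<-1$ and which therefore meets the cut $[-1,0]$ only at the single point $z=0$, carries a continuous, bounded integrand. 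Since $(z,s)\mapsto f_\a(z)e^{sz}$ is continuous on $C(r,c)\times\C$ and entire in $s$ for each fixed $z$, differentiation under the integral sign (or Morera's theorem combined with Fubini) shows that $\varphi_\a$ in \eqref{eq:phi1} is entire.

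For independence of $c$ and $r$ I would invoke Cauchy's theorem. Comparing $C(r,c_1)$ and $C(r,c_2)$ with $c_1<c_2$, the common right edge on the imaginary axis cancels, and the remaining contour bounds the rectangle $\{-c_2\le\Rea z\le-c_1,\ |\Ima z|\le r\}$, which lies strictly left of $-1$ and so inside the domain of holomorphy of $f_\a(z)e^{sz}$; the integral over it vanishes. Comparing $C(r_1,c)$ and $C(r_2,c)$ with $r_1<r_2$, the central part $[-ir_1,ir_1]$ of the right edge again cancels and the difference reduces to integrals over two horizontal strips with $|\Ima z|\ge r_1>0$, which avoid the cut entirely. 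In every case the enclosed region misses $[-1,0]$, so all the integrals coincide.

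The main work is \eqref{eq:Lap}. For $x>0$ and $z$ on $C(r,c)$ one has $\Rea(x-z)=x-\Rea z\ge x>0$, so $\int_0^\infty e^{-s(x-z)}\,ds=1/(x-z)$; since $|f_\a(z)e^{sz-sx}|\le\|f_\a\|_{\infty,C}\,e^{-sx}$ on $C(r,c)$, Fubini's theorem gives
\begin{equation*}
\int_0^\infty e^{-sx}\varphi_\a(s)\,ds=\frac{1}{2\pi i}\int_{C(r,c)}\frac{f_\a(z)}{x-z}\,dz.
\end{equation*}
It then remains to evaluate this Cauchy-type integral. The pole $z=x$ lies outside $C(r,c)$ while the cut $[-1,0]$ lies inside, so the naive Cauchy formula does not apply; instead I would enclose both $C(r,c)$ and the point $x$ in a large circle $\Gamma_R$ and apply the residue theorem to the annular region between them, where $f_\a(z)/(x-z)$ is holomorphic apart from the simple pole at $z=x$ of residue $-f_\a(x)$. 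The decisive ingredient is the behaviour of $f_\a$ at infinity: expanding $\a z\Log(1+1/z)=\a-\a/(2z)+O(z^{-2})$ yields $f_\a(z)=\tfrac12\a e^\a z^{-1}+O(z^{-2})$ uniformly as $|z|\to\infty$, whence $\int_{\Gamma_R}f_\a(z)/(x-z)\,dz=O(1/R)\to0$. Letting $R\to\infty$ leaves $\frac{1}{2\pi i}\int_{C(r,c)}f_\a(z)/(x-z)\,dz=f_\a(x)$, which is \eqref{eq:Lap}. I expect this contour-at-infinity step, resting on the uniform estimate $f_\a(z)=O(1/z)$, to be the principal obstacle.

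Finally, for $s\ge0$ the inequality $|e^{sz}|=e^{s\Rea z}\le1$ on $C(r,c)$ gives $|\varphi_\a(s)|\le(2\pi)^{-1}\int_{C(r,c)}|f_\a(z)|\,|dz|$, a bound independent of $s$, proving boundedness on $[0,\infty[$. For the decay as $s\to\infty$ I would split $C(r,c)$ into its four edges. On the left edge $|e^{sz}|=e^{-sc}$ decays exponentially; on the top and bottom edges $\int_{-c}^0|f_\a(t\pm ir)|e^{st}\,dt\le\|f_\a\|_{\infty,C}(1-e^{-sc})/s\to0$ by the concentration of $e^{st}$ near $t=0$; and on the right edge $z=iy$ the factor $e^{isy}$ oscillates, so $\int_{-r}^r f_\a(iy)e^{isy}\,dy\to0$ by the Riemann--Lebesgue lemma, $f_\a(i\,\cdot)$ being continuous hence integrable on $[-r,r]$. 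Summing the four contributions gives $\varphi_\a(s)\to0$.
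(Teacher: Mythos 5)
Your proposal proves all four assertions, and three of them --- holomorphy in $s$, independence of the contour, and the boundedness/decay on $[0,\infty[$ --- follow essentially the paper's own lines: the paper also dismisses entireness and contour independence with Cauchy's theorem (Theorem~\ref{thm:cut}), and its proof of decay in Theorem~\ref{thm:main1} is exactly your four-edge decomposition, with Riemann--Lebesgue on the right edge, the factor $e^{-sc}$ on the left edge, and $\int_{-c}^0 e^{st}\,dt\le 1/s$ on the horizontal edges. Where you genuinely diverge is the proof of \eqref{eq:Lap}. The paper never evaluates the Cauchy-type integral $\frac{1}{2\pi i}\int_{C(r,c)}f_\alpha(z)(x-z)^{-1}\,dz$. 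Instead it first derives the power series \eqref{eq:ent1} of $\varphi_\alpha$ (Theorem~\ref{thm:intro2}, resting on the Bell-polynomial coefficients $p_n(\alpha)$ and the Laurent expansion of Corollary~\ref{thm:laurent-f}), proves the bounds of Proposition~\ref{thm:p-bounds}-$(ii,\,iii)$, integrates the series term by term against $e^{-sx}$ by dominated convergence, recognizes the result as the Laurent series of $f_\alpha(x)$ for $x>\max(1,|\alpha|)$, and concludes by the identity theorem; the Fourier--Plancherel apparatus (Lemma~\ref{thm:L2}, Lemma~\ref{thm:s<0}, Theorem~\ref{thm:cut}) serves to identify $\varphi_\alpha$ with $G_\alpha$. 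Your Fubini-plus-residue argument is shorter and more self-contained: it needs only the uniform estimate $f_\alpha(z)=O(1/|z|)$ of \eqref{eq:f-infty}, and it bypasses both the polynomial machinery and the $L^2$-theory. What the paper's longer route buys is its by-products: the power series \eqref{eq:ent1}, which is the computational backbone of everything that follows, and the identification of $\varphi_\alpha$ with the inverse Plancherel transform.

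One step needs repair. You claim the singularity of $f_\alpha$ at $z=0$ is \emph{removable}; it is not. The point $z=0$ is a branch point: the boundary values of $f_\alpha$ from above and below the cut differ at points $-\epsilon$ with $\epsilon>0$ arbitrarily small (this jump is exactly what is computed in the proof of Theorem~\ref{thm:Sta1}, where it equals $\mp(x/(1-x))^{\alpha x}\sin(\alpha\pi x)\neq 0$ for real $\alpha$), so $f_\alpha$ admits no holomorphic extension to any neighborhood of $0$; what is true, by \eqref{eq:fa(0)}, is that $f_\alpha$ extends \emph{continuously} to $z=0$ from $\C\setminus[-1,0]$. This matters because the right edge of $C(r,c)$ passes through $z=0$, so your residue theorem is applied to a region whose boundary contains a point where the integrand is not holomorphic. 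The repair is routine: for $0<\delta<x$ let $C_\delta$ be the rectangle with corners $-c\pm ir$, $\delta\pm ir$, which lies entirely in $\C\setminus[-1,0]$ and still encircles the cut; apply the residue theorem between $C_\delta$ and $\Gamma_R$ and let $R\to\infty$ to get $\frac{1}{2\pi i}\int_{C_\delta}f_\alpha(z)(x-z)^{-1}\,dz=f_\alpha(x)$; then let $\delta\to 0^+$, using that $f_\alpha$ (continuously extended at $0$) is uniformly continuous on the compact set $\{0\le\Rea z\le x/2,\,|\Ima z|\le r\}$, so the integrals over $C_\delta$ converge to the integral over $C(r,c)$. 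With this modification your argument is complete; note that your Fubini step, the contour-independence argument, and the decay estimates are unaffected, since they use only continuity and boundedness of $f_\alpha$ on the contour, or Cauchy's theorem on regions that avoid the cut altogether.
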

Theorem~\ref{thm:intro1} is contained in Theorem~\ref{thm:cut} and in Theorem~\ref{thm:main1}. In particular, the formula \eqref{eq:Lap} is proved in Theorem~\ref{thm:main1}. 
 
The power series of the entire functions $\varphi_\a$ are given in the following theorem, depending on a remarkable sequence of polynomials:

\begin{thm}\label{thm:intro2} Let $(p_n)_{n\ge 0}$ denote the sequence of polynomials  defined by
\begin{equation}\label{eq:poly1} 
p_0(\a)=1,\quad p_1(\a)=\frac{\a}{2},\quad p_2(\a)=\frac{\a}{3}+\frac{\a^2}{8},\ldots,
\end{equation}
and in general
\begin{equation}\label{eq:poly-rec}
p_{n+1}(\a)=\frac{\a}{n+1}\sum_{k=0}^n \frac{k+1}{k+2} p_{n-k}(\a),\quad n\ge 0.
\end{equation}

For $\a\in \C$ 
\begin{equation}\label{eq:ent1}
\varphi_\a(s)=e^\a\sum_{n=0}^\infty(-1)^np_{n+1}(\a)\frac{s^n}{n!},\quad s\in\C.
\end{equation}
 In particular
 \begin{equation}\label{eq:phia(0)}
 \varphi_\a(0)=e^\a \a/2.
 \end{equation}
\end{thm}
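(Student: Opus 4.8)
The plan is to read off the Taylor coefficients of the entire function $\varphi_\a$ at the origin directly from the contour representation \eqref{eq:phi1}. Since $C(r,c)$ is a compact contour on which $f_\a$ is bounded, I would differentiate under the integral sign $n$ times in $s$, obtaining $\varphi_\a^{(n)}(0) = \frac{1}{2\pi i}\int_{C(r,c)} f_\a(z) z^n\, dz$. Writing $f_\a = e^\a - h_\a$, the constant term $e^\a$ contributes nothing because $z^n$ is entire and $C(r,c)$ is closed, so $\varphi_\a^{(n)}(0) = -\frac{1}{2\pi i}\int_{C(r,c)} h_\a(z) z^n\, dz$. Thus everything reduces to extracting a single Laurent coefficient of $h_\a$ at infinity.

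Next I would produce that Laurent expansion. Expanding the principal logarithm in \eqref{eq:f1z} as a series in $1/z$ gives $z\Log(1+1/z) = \sum_{j\ge 0}\frac{(-1)^j}{j+1}z^{-j}$ for $|z|>1$, so that $h_\a(z) = e^\a G(1/z)$, where $G(u) = \exp(\a\sum_{j\ge 1}\frac{(-1)^j}{j+1}u^j) = \sum_{m\ge 0} a_m u^m$ is analytic near $u=0$ with $a_0 = 1$. Using that $f_\a$ (equivalently $h_\a$) is analytic off the segment $[-1,0]$, which $C(r,c)$ encloses, I would deform $C(r,c)$ to a circle $|z| = \rho$ with $\rho > c$ and integrate the uniformly convergent series term by term; only the $z^{-1}$ term survives, yielding $\varphi_\a^{(n)}(0) = -e^\a a_{n+1}$.

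It then remains to identify $a_m$ with the polynomials $p_m$. The clean route is to set $p_m := (-1)^m a_m$ and to derive a recurrence for $a_m$ from the first-order differential equation $G' = G\cdot\a\sum_{j\ge 1}\frac{(-1)^j j}{j+1}u^{j-1}$ satisfied by the generating function, i.e. from its logarithmic derivative. Equating coefficients of $u^m$ gives $a_{m+1} = \frac{\a}{m+1}\sum_{i=0}^m\frac{(-1)^{i+1}(i+1)}{i+2}a_{m-i}$, and substituting $a_m = (-1)^m p_m$ collapses all signs to a common factor $(-1)^{m+1}$, turning this into exactly the recurrence \eqref{eq:poly-rec}; the value $a_0=1$ gives $p_0=1$, matching \eqref{eq:poly1}. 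Combining with $\varphi_\a^{(n)}(0) = -e^\a a_{n+1} = e^\a(-1)^n p_{n+1}(\a)$ and summing the Taylor series produces \eqref{eq:ent1}, while the case $n=0$ gives $\varphi_\a(0) = e^\a p_1(\a) = e^\a\a/2$, which is \eqref{eq:phia(0)}.

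The main obstacle I anticipate is the justification of the contour deformation rather than the algebra: one must confirm that $h_\a$ is analytic throughout the region between $C(r,c)$ and the circle (in particular that it continues analytically across the negative real axis beyond $-1$) and must handle the fact that the right edge of $C(r,c)$ touches the endpoint $z=0$ of the branch cut, where $h_\a$ nevertheless stays bounded, tending to $1$. Once analyticity off $[-1,0]$ and this boundedness are established, the deformation and term-by-term integration are routine, and the remaining passage from the $a_m$-recurrence to \eqref{eq:poly-rec} is a matter of bookkeeping of signs.
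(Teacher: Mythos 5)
Your proposal is correct, and its analytic skeleton is exactly the paper's: the Taylor coefficients of $\varphi_\a$ are read off as $\frac{1}{2\pi i}\int_{C(r,c)}f_\a(z)z^n\,dz$ (the paper expands $e^{sz}$ and integrates term by term, which amounts to your differentiation under the integral), the contour is then replaced by a large circle using holomorphy of $f_\a$ off $[-1,0]$, and term-by-term integration of the Laurent series at infinity picks out the coefficient of $z^{-(n+1)}$, giving $e^\a(-1)^np_{n+1}(\a)$. The one genuine point of divergence is how the Laurent coefficients are identified with the polynomials defined by \eqref{eq:poly1}--\eqref{eq:poly-rec}: the paper (Proposition~\ref{thm:power-g}) writes $\exp\bigl(\sum a_k z^k/k!\bigr)$ in terms of exponential Bell partition polynomials with $a_k=(-1)^k\a k!/(k+1)$ and imports the known recursion for $B_{n+1}$ from \cite{Ch}, whereas you derive the recurrence directly from the logarithmic-derivative ODE $G'=\a L'G$ of the generating function and match initial conditions. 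Your route is more self-contained and elementary (the Bell-polynomial recursion is, in effect, this same ODE argument, so nothing is lost logically); the paper's route buys the explicit representation \eqref{eq:pnBn} of $p_n$ as a Bell polynomial, which is reused later in Remark~\ref{thm:referee} to obtain the Stirling-number formula \eqref{eq:referee1} for the coefficients $c_{n,k}$. A final remark in your favor: you explicitly flag that the right edge of $C(r,c)$ passes through $z=0$, the endpoint of the cut, where $f_\a$ is only continuous (with limit $e^\a-1$), so the deformation needs a small limiting/indentation argument; the paper's proof invokes Cauchy's theorem without comment on this point, so your version is, if anything, more careful.
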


Some properties of the polynomials $p_n$ are given in Proposition \ref{thm:p-bounds}, 
while  Theorem~\ref{thm:intro2} is proved in Section \ref{sec:fa}.

\medskip
It follows from \eqref{eq:ent1} that $(\a,s)\to \varphi_\a(s)$ is an entire function on $\C^2$, and $s\mapsto\varphi_\a(s)$ is not identically zero when $\a\neq 0$,  so it has at most countably many zeros $s\in\C$ which are all isolated. Furthermore when $\a>0$ then $\varphi_\a(s)>0$ for $s\le 0$. 

More results that can be deduced from \eqref{eq:ent1} are contained in the following.

\begin{thm}\label{thm:final} Consider the entire  function  $\C^2\to \C:(\a,s)\mapsto \varphi_\a(s)$.
	\begin{itemize}
	\item[(i)]	\begin{equation*} 
\lim_{\a\to 0}\frac{\varphi_\a(s)}{\a e^\a}=\sum_{n=0}^\infty \frac{(-1)^n}{n+2}\frac{s^n}{n!}
=\left\{ \begin{array}{ll} \displaystyle \frac{1-(1+s)e^{-s}}{s^2},\,& \mbox{when}\ s\neq 0, \\
\displaystyle\frac12,\,&\mbox{when}\ s=0,
\end{array}\right.  
\end{equation*}
uniformly for $s$ in compact subsets of the complex plane.

The limit function has no real zeros but infinitely many complex zeros $s=\xi_k=-W(k,-1/e)-1, k\in \Z \setminus\{-1,0\}$, where W(k,z) is the $k$'th branch of the Lambert $W$ function, see \cite{CGHJK}.
We have
$$\xi_1=\overline {\xi_{-2}}\approx 2.08884-7.46148 i,\qquad \xi_2=\overline {\xi_{-3}}\approx 2.66406-13.87905 i.$$
	\item[(ii)] Given $n\in\N$, $\varphi_\a$ has at least $n$ non-real zeros, when $|\a|$  is sufficiently small.

	\item[(iii)]
	\begin{equation*} 
	\lim_{|\a|\to\infty}\frac{{\varphi}_\a(s/\a)}{\a\,e^\a}=\frac{J_1(\sqrt{2s})}{\sqrt{2s}}
	\end{equation*}
	uniformly for $s$ in compact subsets of the complex plane, 	where $J_1$ is the Bessel function of order 1.	
	\item[(iv)]

Given $n\in\N$, $\varphi_\a$ has at least $n$ simple zeros $s_1(\a), s_2(\a),\ldots, s_n(\a)$ such that
$0<|s_1(\a)|<|s_2(\a)|<\ldots <|s_n(\a)|$ for $|\a|$ sufficiently large, and they satisfy
\begin{equation*} 
\lim_{|\a|\to\infty} \a s_k(\a)=\frac{j_k^2}{2} \qquad \mbox{for all}\; k\le n,
\end{equation*}     
where $0<j_1<j_2<\ldots$ are the positive zeros of $J_1$.

If in addition $\a>0$,  then $s_j(\a)>0,\ j=1,\ldots,n$.

	\item[(v)] For  $\a>0$,	the entire functions $\varphi_\a$ are of order one and type one. 
\end{itemize}
\end{thm}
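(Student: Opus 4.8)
The plan is to derive all five statements from the power-series representation \eqref{eq:ent1} together with the recursion \eqref{eq:poly-rec} and the bounds of Proposition~\ref{thm:p-bounds}; in each part the analytic heart is the computation of a limit of the Taylor coefficients $p_{n+1}(\a)$ in the appropriate regime, followed by the identification of the resulting power series with a known special function. For (i) I would first note from \eqref{eq:poly-rec} that every $p_n$ with $n\ge 1$ is divisible by $\a$, so $\psi(\a,s):=\varphi_\a(s)/(\a e^\a)$ extends to an entire function on $\C^2$; the limit as $\a\to 0$ is then just $\psi(0,s)$, and joint continuity on $\{|\a|\le 1\}\times K$ yields the stated uniformity on compact $s$-sets $K$ for free. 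To evaluate $\psi(0,s)$ I would let $\a\to 0$ in \eqref{eq:poly-rec}: since $p_{n-k}(0)=\delta_{k,n}$, only the $k=n$ term survives and $\lim_{\a\to 0}p_{n+1}(\a)/\a=\tfrac1{n+2}$. Writing $\tfrac1{n+2}=\int_0^1 t^{n+1}\,dt$ and summing under the integral gives $\psi(0,s)=\int_0^1 t e^{-st}\,dt$, which integration by parts turns into the asserted closed form. For the zeros I would solve $(1+s)e^{-s}=1$: the substitution $w=-(1+s)$ reduces this to $we^w=-1/e$, whose solutions are $w=W(k,-1/e)$, giving $s=\xi_k=-W(k,-1/e)-1$; the branches $k=0,-1$ both yield the removable point $s=0$ and are discarded, while $W(k,-1/e)$ is real only for those branches, so there are no real zeros.

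For (iii) the same scheme applies after rescaling: \eqref{eq:ent1} gives $\varphi_\a(s/\a)/(\a e^\a)=\sum_n(-1)^n\bigl(p_{n+1}(\a)/\a^{n+1}\bigr)s^n/n!$, so I need $\lim_{|\a|\to\infty}p_{n+1}(\a)/\a^{n+1}$, that is, the leading coefficient of $p_{n+1}$. Tracking the top-degree term in \eqref{eq:poly-rec} (only $k=0$ contributes) gives $a_{n+1}=a_n/(2(n+1))$ with $a_0=1$, hence $a_n=1/(2^n n!)$, and $\sum_n(-1)^n s^n/(2^{n+1}n!(n+1)!)$ is exactly the Maclaurin series of $J_1(\sqrt{2s})/\sqrt{2s}$. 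To pass the limit through the sum uniformly on compacts I would use that the $p_n$ have nonnegative coefficients (clear from \eqref{eq:poly-rec}), so that $|p_{n+1}(\a)/\a^{n+1}|\le p_{n+1}(1)$ for $|\a|\ge 1$; since $\sum_n p_{n+1}(1)|s|^n/n!$ is an entire majorant, Tannery's theorem (dominated convergence for series) delivers the uniform convergence.

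Parts (ii) and (iv) I would obtain from Hurwitz's theorem applied to these two uniform limits. In (i) the limit function has infinitely many non-real zeros $\xi_k$; choosing $n$ of them in the open upper half-plane and enclosing each in a small disk there, Hurwitz guarantees that for $|\a|$ small $\varphi_\a$ has a zero in each disk, and these are non-real since the disks avoid $\R$. For (iv) the limit $J_1(\sqrt{2s})/\sqrt{2s}$ has simple zeros at $s=j_k^2/2$ (simplicity follows from $\tfrac{d}{ds}[J_1(\sqrt{2s})/\sqrt{2s}]=-J_2(\sqrt{2s})/(2s)$ and $J_2(j_k)\neq 0$); Hurwitz then produces, for $|\a|$ large, a unique simple zero $\sigma_k(\a)\to j_k^2/2$ of $s\mapsto\varphi_\a(s/\a)$ near each, whence the $s_k(\a)=\sigma_k(\a)/\a$ are simple zeros of $\varphi_\a$ with $\a s_k(\a)=\sigma_k(\a)\to j_k^2/2$ and, for $|\a|$ large, strictly increasing moduli. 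When $\a>0$ the rescaled function has real coefficients, so a non-real $\sigma_k(\a)$ would force a conjugate partner in the same real-symmetric disk, contradicting uniqueness; hence each $\sigma_k(\a)$, and therefore each $s_k(\a)$, is real and positive.

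Finally, for (v) I would invoke the classical formulas giving the order and type of an entire function $\sum c_n s^n$ through $\limsup_n (n\log n)/\log(1/|c_n|)$ and $\tfrac1e\limsup_n n|c_n|^{1/n}$. With $c_n=e^\a(-1)^np_{n+1}(\a)/n!$ and Stirling's formula, both computations reduce to showing $|p_n(\a)|^{1/n}\to 1$ for fixed $\a>0$. Sandwiching \eqref{eq:poly-rec} via $\tfrac{k+1}{k+2}\in[\tfrac12,1)$ and the positivity of the $p_j(\a)$ gives $\tfrac{\a}{2(n+1)}\le p_{n+1}(\a)\le\tfrac{\a}{n+1}\sum_{j=0}^n p_j(\a)$; the upper estimate yields $\sum_{j\le n}p_j(\a)\le\prod_{j=1}^n(1+\a/j)=O(n^\a)$, so $p_n(\a)$ grows only polynomially, while the lower estimate rules out exponential decay, and together they force $|p_n(\a)|^{1/n}\to 1$, hence order one and type one. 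I expect the main obstacle to be the uniform-convergence bookkeeping in (iii)—securing a single entire majorant valid for all large $|\a|$—and the reality/simplicity argument in (iv), where Hurwitz's theorem must be combined with the conjugate symmetry of the coefficients carefully enough to conclude positivity of the $s_k(\a)$.
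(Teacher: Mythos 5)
Your proposal is correct, and for parts (i)--(iv) it takes essentially the paper's route: the coefficient limits $p_{n+1}(\a)/\a\to 1/(n+2)$ and $p_n(\a)/\a^n\to 1/(2^n n!)$, a dominated-convergence statement for power series (your ``Tannery'' step is exactly the paper's Lemma~\ref{thm:entire}), identification of the limits with $(1-(1+s)e^{-s})/s^2$ and $J_1(\sqrt{2s})/\sqrt{2s}$, Hurwitz's theorem for (ii) and (iv), and the conjugate-pair argument forcing the $s_k(\a)$ to be real and positive when $\a>0$. The only cosmetic difference is in (i): you obtain uniformity from joint continuity of the entire extension of $\varphi_\a(s)/(\a e^\a)$ to $\C^2$, which rests on the standard fact that an entire function on $\C^2$ vanishing identically on $\{\a=0\}$ is divisible by $\a$ in the ring of entire functions; this is fine, though the coefficient bound $|p_{n+1}(\a)/\a|\le p_{n+1}(1)\le 1$ for $0<|\a|\le 1$ (which you essentially use in (iii) anyway) would feed directly into the same dominated-convergence lemma, as the paper does. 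Part (v) is where you genuinely depart from the paper, and your route is simpler: the paper deduces order and type from Proposition~\ref{thm:p-bounds}-$(vii)$, $\lim_n p_{n+1}(\a)/p_n(\a)=1$, whose proof occupies most of Section~\ref{sec:app} (the mean-value sequence $M_n(\a)$, Proposition~\ref{thm:toinfty}, and separate case analyses for $\a\ge 2$ and $1<\a\le 2$), whereas you prove only what is needed, $p_n(\a)^{1/n}\to 1$, by an elementary sandwich: the recursion gives $p_{n+1}(\a)\ge \a/(2(n+1))$ on one side, while on the other the partial sums $S_n=\sum_{j\le n}p_j(\a)$ satisfy $S_{n+1}\le S_n\left(1+\a/(n+1)\right)$, whence $p_n(\a)\le S_n\le\prod_{j=1}^n\left(1+\a/j\right)=O(n^\a)$. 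This bypasses the machinery of Section~\ref{sec:app} entirely and, as a by-product, gives the polynomial growth bound $p_n(\a)=O(n^{\a})$, which for $\a>1$ is sharper for large $n$ than the bound $p_n(\a)\le\a^n$ of Proposition~\ref{thm:p-bounds}-$(iii)$; what the paper's longer route buys is the stronger ratio limit itself, which is of independent interest but not needed for the order-and-type computation.
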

	
\begin{rem}
{\rm 	It is worth observing that Property $(iv)$ above is an analytical proof of the existence of $\a^*$, in contrast with \cite{B}, where this was obtained  by computing  $f_3^{(4)}(0.4)<0$, which implies that $f_3$ cannot be completely monotonic.}
\end{rem}	
	The proof of Theorem \ref{thm:final} is given in Section \ref{sec:propphi}.

\medskip
If $(p_{n+1}(\a))_{n\ge 0}$ is a Stieltjes moment sequence, i.e., if there exists a positive measure $\sigma_\a$ on $[0,\infty[$ such that
\begin{equation}\label{eq:Sti}
p_{n+1}(\a)=\int_0^\infty x^n\,d\sigma_\a(x),\quad n\ge 0,
\end{equation}
then it is easy to see that
\begin{equation}\label{eq:Lapsigma}
\varphi_\a(s)=e^\a\int_0^\infty e^{-sx}\,d\sigma_\a(x),\quad s\in\C,
\end{equation}
and in particular $\varphi_\a(s)\ge 0$ for $s\ge 0$ and hence $0\le \a\le \a^*$.

However, this argument is only useful for $\a\le1$, in fact, the following holds. 
\begin{thm}\label{thm:St-cm} The following conditions are equivalent:
\begin{itemize}
\item[(i)] $(p_{n+1}(\a))_{n\ge 0}$ is a Stieltjes moment sequence.
\item[(ii)] $\varphi_\a$ is completely monotonic.
\item[(iii)] $0\le \a\le 1$.
\end{itemize}
If the equivalent conditions hold, then $\sigma_\a$ from \eqref{eq:Sti} is supported by $[0,1]$, and $(p_{n+1}(\a))_{n\ge 0}$ is a Hausdorff moment sequence.
\end{thm}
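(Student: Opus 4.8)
The plan is to recognize condition (ii) as a complex-analytic regularity property of $f_\a$ and to reduce the entire statement to one extremal computation in the upper half-plane.

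\emph{Reduction.} The implication (i)$\Rightarrow$(ii) is the computation \eqref{eq:Lapsigma}. For (ii)$\Rightarrow$(i) I would use Bernstein's theorem \cite{W}: a completely monotonic $\varphi_\a$ equals $\int_0^\infty e^{-sx}\,d\mu(x)$ for a positive measure $\mu$, and letting $s\downarrow0$ in $(-1)^n\varphi_\a^{(n)}(s)=\int_0^\infty x^ne^{-sx}\,d\mu(x)$ (monotone convergence, all quantities finite since $\varphi_\a$ is entire) identifies $\int_0^\infty x^n\,d\mu(x)=(-1)^n\varphi_\a^{(n)}(0)=e^\a p_{n+1}(\a)$ by \eqref{eq:ent1}; dividing by $e^\a$ makes $(p_{n+1}(\a))$ a Stieltjes moment sequence. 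Next I would reduce (ii) to a property of $f_\a$ itself: inserting $\varphi_\a=\int_0^\infty e^{-sx}\,d\mu$ into \eqref{eq:Lap} and integrating in $s$ gives $f_\a(x)=\int_0^\infty \frac{d\mu(t)}{x+t}$, i.e. $f_\a$ is a Stieltjes function (with no $a/x$ or constant term, since $f_\a(0^+)=e^\a-1$ and $f_\a(\infty)=0$); conversely a Stieltjes function is the Laplace transform of a completely monotonic function, which by injectivity of the Laplace transform must be $\varphi_\a$. Hence (ii) holds iff $f_\a$ is a Stieltjes function. Since $f_\a$ is holomorphic on $\C\setminus[-1,0]$ (the branch cut of $\Log(1+1/z)$ is precisely $[-1,0]$) and $f_\a>0$ on $]0,\infty[$, the standard half-plane description of the Stieltjes class \cite{S:S:V} says this is equivalent to $\Ima f_\a(z)\le0$ on the upper half-plane $\mathbb{H}$. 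Writing $w(z):=z\Log(1+1/z)$, so $h_\a=\exp(\a w)$ and $\Ima f_\a=-\Ima h_\a=-e^{\a\Rea w}\sin(\a\,\Ima w)$, the condition becomes
\begin{equation*}
\sin\!\big(\a\,\Ima w(z)\big)\ge0\qquad\text{for all }z\in\mathbb{H}.
\end{equation*}

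\emph{The crux.} Everything now rests on the range of the bounded harmonic function $\Ima w$ on $\mathbb{H}$. I would compute its boundary values: $\Ima w=0$ on $]0,\infty[$ and on $]-\infty,-1[$, whereas on the upper edge of the cut, $z=x+i0$ with $x\in\,]-1,0[$, one has $\Log(1+1/z)\to\log|1+1/x|-i\pi$ and hence $\Ima w\to-\pi x\in\,]0,\pi[$. As $\Ima w$ is bounded and harmonic, it is the Poisson integral of these boundary values, so $0<\Ima w<\pi$ throughout $\mathbb{H}$ with $\sup_{\mathbb{H}}\Ima w=\pi$ approached only as $z\to-1$ along the cut; in particular $\Ima w$ assumes every value in $]0,\pi[$. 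Therefore $\sin(\a\,\Ima w)\ge0$ holds on all of $\mathbb{H}$ exactly when $\sin(\a u)\ge0$ for every $u\in\,]0,\pi[$, i.e. exactly when $0\le\a\le1$. This gives (ii)$\Leftrightarrow$(iii): for $\a>1$ points near $-1$ yield $\a\,\Ima w\in\,]\pi,2\pi[$ and $\Ima f_\a>0$, and for $\a<0$ points with small $\Ima w$ yield $\a\,\Ima w<0$ and again $\Ima f_\a>0$. The decisive feature is that the threshold is pinned at $\a=1$ by $\sup_{\mathbb{H}}\Ima w=\pi$, rather than at the larger value $\a^*$ that only governs the weaker property $\varphi_\a\ge0$.

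\emph{The representing measure.} For $0\le\a\le1$ I would finally make $\sigma_\a$ explicit by collapsing the rectangle in \eqref{eq:phi1} onto $[-1,0]$, the only singular set of $f_\a$; the vertical pieces disappear and, since the jump of $\Log(1+1/z)$ across the cut is $-2\pi i$, the computation of $f_\a(x\pm i0)$ used above yields
\begin{equation*}
\varphi_\a(s)=\int_0^1 e^{-sy}k_\a(y)\,dy,\qquad k_\a(y)=\frac1\pi\Big(\frac{y}{1-y}\Big)^{\a y}\sin(\a\pi y)\ge0,
\end{equation*}
where $k_\a$ is integrable on $[0,1]$ (near $y=1$ the factor $(1-y)^{-\a}$ is controlled by $\sin(\a\pi y)$ when $\a=1$ and is integrable when $\a<1$, and $k_\a(y)\to0$ as $y\to0$). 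Comparing with \eqref{eq:ent1} gives $p_{n+1}(\a)=e^{-\a}\int_0^1 y^nk_\a(y)\,dy$, so $\sigma_\a=e^{-\a}k_\a\,dy$ is supported by $[0,1]$ and $(p_{n+1}(\a))$ is a Hausdorff moment sequence, as claimed.

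\emph{Main obstacle.} The one genuinely delicate point is the extremal identity $\sup_{\mathbb{H}}\Ima w=\pi$: one must verify that $\Ima w$ is bounded on $\mathbb{H}$ — its only potential blow-up is at $z=-1$, where $\Rea w\to+\infty$ but $\Ima w$ remains in $]0,\pi[$ — compute the boundary values on both edges of the cut correctly, and apply the maximum principle in the unbounded domain $\mathbb{H}$. Everything else is either a standard transform identity or a sign check.
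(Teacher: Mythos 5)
Your core equivalence argument is sound and takes a genuinely different route from the paper's. The paper proves ``$f_\a$ Stieltjes $\Leftrightarrow 0\le\a\le1$'' in pieces: sufficiency by explicitly collapsing the contour for $0<\a<1$ (Theorem~\ref{thm:Sta1}) plus a separate weak-limit argument to handle $\a=1$ (Proposition~\ref{thm:weak}, Corollary~\ref{thm:repa=1}), and necessity by computing boundary values of $\Ima f_\a$ on the cut (Proposition~\ref{thm:nonSti}), which uses only the easy half of the Stieltjes/half-plane correspondence. You instead invoke the full characterization from \cite{S:S:V} and reduce everything to the range of the bounded harmonic function $\Ima w$, $w(z)=z\Log(1+1/z)$, on the upper half-plane; pinning that range to $\left]0,\pi\right[$ explains conceptually why the threshold sits exactly at $\a=1$ (because $\sup \Ima w=\pi$) and disposes of $\a=1$ with no extra work. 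Your verification of the crux (boundary values $0$ on $\left]0,\infty\right[$ and $\left]-\infty,-1\right[$, value $-\pi x$ on the cut, boundedness of $\Ima w$ near $z=-1$, Poisson representation/maximum principle) is correct, and your (i)$\Leftrightarrow$(ii) steps essentially coincide with the paper's: moment identification via derivatives at $s=0$, term-by-term summation using the geometric bound on $p_n(\a)$, and Laplace-transform injectivity.

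However, your ``representing measure'' paragraph contains a genuine error at the endpoint $\a=1$, and it is exactly the trap the paper devotes Proposition~\ref{thm:weak} to. The formula $\varphi_\a(s)=\int_0^1e^{-sy}k_\a(y)\,dy$ is true for $0<\a<1$ but \emph{false} for $\a=1$: the correct identity \eqref{eq:St1_1} is $\varphi_1(s)=e^{-s}+\frac1\pi\int_0^1(x/(1-x))^{x}\sin(\pi x)e^{-sx}\,dx$, i.e., $\sigma_1$ carries an atom $e^{-1}\delta_1$. Sanity check: your formula would give $\varphi_1(0)=\frac1\pi\int_0^1(x/(1-x))^{x}\sin(\pi x)\,dx=e/2-1$ by \eqref{eq:I2}, contradicting \eqref{eq:phia(0)}, which gives $\varphi_1(0)=e/2$; the missing mass is exactly the atom. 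The reason your collapse fails there: dominated convergence for the shrinking rectangle requires a majorant of the form $C|1-x|^{-\a c}$ where $-c$ is the left edge with $c>1$ (as in the proof of Theorem~\ref{thm:Sta1}), integrable only when $\a c<1$ --- impossible for $\a=1$. Integrability of the limiting density $k_1$ (your ``controlled by $\sin(\a\pi y)$'' remark) is not the issue; the issue is that the limit of the integrals is not the integral of the pointwise limit: unit mass escapes to the point $y=1$. The support/Hausdorff assertion itself survives, but must be obtained differently: e.g.\ the paper's route, where the bound $p_n(\a)\le1$ of Proposition~\ref{thm:p-bounds}-$(iii)$ forces $\supp\sigma_\a\subseteq[0,1]$, or, within your framework, by Stieltjes inversion --- since $f_\a$ is holomorphic across $\left]-\infty,-1\right[$, the representing measure of the Stieltjes function $f_\a$ must be supported in $[0,1]$.
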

The proof is given in Section \ref{sec:St1}, where  we also find the measures $\sigma_\a$ for $0\le\a\le 1$
 (see the Equations \eqref{eq:St1} and \eqref{eq:St1_1}).

\medskip

For $\a>1$, on the other hand,  we show  in Theorem \ref{thm:alpha>1}  that the function 
$\varphi_\a$ can be decomposed as the sum of a completely monotonic function and a suitable contour integral (see Equation \eqref{eq:phi7}). 

 Even so,  we  have not been able to find an expression  which turns out useful in order to check if $\varphi_\a$ is non-negative  on $[0,\infty[$.
 As a consequence, for these values of $\a$, we have to rely on
numerical calculation.
For this purpose one can use the contour integral \eqref{eq:phi1}, 
but we prefer to use the power series \eqref{eq:ent1}, because of the following result.
\begin{thm}\label{thm:alt}  For $\a>0$, $n\ge 0$,  we know that $p_n(\alpha)> 0$ and 
	\begin{equation}\label{eq:ratbdd}
	\frac{p_{n+1}(\a)}{p_n(\a)}\le \widehat\a:=\left\{ \begin{array}{lll} 1,\,& \mbox{when}\, 0\le \a \le 1, \\
	2\a,\,&\mbox{when}\, 1 < \a<2, \\
	\a,\,&\mbox{when}\, 2\le \a.
	\end{array}\right.
	\end{equation}
	Then the series \eqref{eq:ent1} satisfies the Alternating Series Test for $n\ge \widehat\a s$,  which allows to obtain an error bound for the truncated series.
\end{thm}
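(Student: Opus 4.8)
The plan is to reduce everything to two numerical facts about the polynomials $p_n$ — positivity and the ratio bound — and then to read off the Alternating Series Test almost for free. Positivity $p_n(\a)>0$ for $\a>0$ is immediate by induction on \eqref{eq:poly-rec}: the factor $\tfrac{\a}{n+1}$ and every coefficient $\tfrac{k+1}{k+2}$ are positive, so $p_0,\dots,p_n>0$ forces $p_{n+1}>0$, with base case $p_0=1$. The heart of the matter is the ratio bound, and for this I would first record an exact identity relating $p_{n+1}$ to $p_n$. Reindexing \eqref{eq:poly-rec} by $j=n-k$ gives $p_{n+1}=\frac{\a}{n+1}\sum_{j=0}^{n}\frac{n-j+1}{n-j+2}p_j$ and, applied to $p_n$, $\frac{n}{\a}p_n=\sum_{j=0}^{n-1}\frac{n-j}{n-j+1}p_j$. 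Splitting off the $j=n$ term (whose coefficient is $\tfrac12$) and using $\frac{n-j+1}{n-j+2}-\frac{n-j}{n-j+1}=\frac{1}{(n-j+1)(n-j+2)}$ yields
\[
p_{n+1}=\frac{n+\a/2}{n+1}\,p_n+\frac{\a}{n+1}\,T_n,\qquad T_n:=\sum_{m=1}^{n}\frac{p_{n-m}}{(m+1)(m+2)}\ge 0 .
\]

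The second ingredient is the clean estimate $T_n\le\frac{n}{3\a}\,p_n$. Indeed $m(m+2)\ge 3$ for $m\ge 1$ gives the termwise comparison $\frac{1}{(m+1)(m+2)}\le\frac13\cdot\frac{m}{m+1}$, and since $\frac{n}{\a}p_n=\sum_{m=1}^{n}\frac{m}{m+1}p_{n-m}$ (the reindexed recursion for $p_n$) has the same positive weights $p_{n-m}$, summation gives $T_n\le\frac13\cdot\frac{n}{\a}p_n$. Feeding this into the identity produces the uniform bound
\[
p_{n+1}\le\frac{\tfrac43 n+\tfrac\a2}{n+1}\,p_n ,
\]
valid for every $\a>0$ and $n\ge 0$. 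It then remains to verify $\frac{\tfrac43 n+\tfrac\a2}{n+1}\le\widehat\a$ on the relevant ranges: for $1<\a<2$ this reduces to $n\big(\tfrac43-2\a\big)\le\tfrac{3\a}{2}$, and for $\a\ge 2$ to $n\big(\tfrac43-\a\big)\le\tfrac\a2$; since $\a>\tfrac23$ the left-hand side is $\le 0$ in both cases, so the inequalities hold for all $n\ge 0$. This settles all $\a>1$.

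For $0\le\a\le 1$ the crude bound is genuinely insufficient, since $\frac{\tfrac43 n+\tfrac\a2}{n+1}\to\tfrac43>1$, so here I would instead invoke Theorem~\ref{thm:St-cm}: in this range $(p_{n+1})_{n\ge 0}$ is a Hausdorff moment sequence, $p_{n+1}=\int_0^1 x^n\,d\sigma_\a(x)$ with $\sigma_\a$ supported by $[0,1]$, whence $\frac{p_{n+1}}{p_n}=\frac{\int_0^1 x^n\,d\sigma_\a}{\int_0^1 x^{n-1}\,d\sigma_\a}\le 1$ for $n\ge 1$, while $\frac{p_1}{p_0}=\a/2\le 1$; thus the ratio is $\le 1=\widehat\a$. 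With the ratio bound in hand, the Alternating Series Test is routine: for fixed $s>0$ the series \eqref{eq:ent1} is $\sum_{n\ge 0}(-1)^n a_n$ with $a_n=p_{n+1}(\a)\tfrac{s^n}{n!}>0$, and $\frac{a_n}{a_{n-1}}=\frac{p_{n+1}}{p_n}\cdot\frac{s}{n}\le\frac{\widehat\a s}{n}\le 1$ as soon as $n\ge\widehat\a s$. Hence $(a_n)$ is non-increasing from that index on, and since $p_{n+1}\le\widehat\a^{\,n}p_1$ also gives $a_n\le p_1\frac{(\widehat\a s)^n}{n!}\to 0$, the test applies and the truncation error is dominated by the first omitted term for any cut-off $n\ge\widehat\a s$.

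The main obstacle is purely the ratio bound, and within it the control of the tail $T_n$: the decisive observation is that $T_n$ is built from the same data as the recursion for $p_n$ itself, so the elementary termwise comparison $\frac{1}{(m+1)(m+2)}\le\frac13\frac{m}{m+1}$ converts the awkward convolution $T_n$ into a multiple of $p_n$. Everything else is bookkeeping. The one place where this mechanism fails is exactly $0\le\a\le 1$, where the crude limit $\tfrac43$ overshoots the sharp value $\widehat\a=1$; there the moment representation of Theorem~\ref{thm:St-cm} is indispensable and gives the bound directly from $\operatorname{supp}\sigma_\a\subseteq[0,1]$.
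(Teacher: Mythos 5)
Your proof is correct, and for $\a>1$ it takes a genuinely different route from the paper's. The paper handles $\a>1$ with two separate tools: for $2\le\a$ it first proves $(p_n(\a))_{n\ge0}$ is increasing (Proposition~\ref{thm:p-bounds}-$(v)$, via the lower bound \eqref{eq:pnp_pn}) and then bounds the ratio by $\a$ directly from the recursion, while for $1<\a<2$ it brings in the mean-value sequence $M_n(\a)$ and combines \eqref{eq:mean1} with \eqref{eq:Mto1} to get $p_{n+1}(\a)/p_n(\a)\le 2M_n(\a)/M_{n-1}(\a)\le 2(n+\a)/(n+1)\le 2\a$. You instead upgrade the paper's inequality \eqref{eq:pnp_pn} to the exact identity $p_{n+1}(\a)=\frac{n+\a/2}{n+1}\,p_n(\a)+\frac{\a}{n+1}\,T_n$ and then kill the convolution tail $T_n$ by the termwise comparison $\frac{1}{(m+1)(m+2)}\le\frac13\cdot\frac{m}{m+1}$, which (because the recursion for $p_n$ has the same positive weights $p_{n-m}$) gives $T_n\le\frac{n}{3\a}p_n(\a)$ and hence the single uniform bound $p_{n+1}(\a)/p_n(\a)\le\bigl(\tfrac43 n+\tfrac\a2\bigr)/(n+1)$, valid for all $\a>0$ and sufficient simultaneously in both ranges $1<\a<2$ and $2\le\a$. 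This is more elementary and unified — no mean values, no monotonicity of $(p_n)$ — though the paper's $M_n$ machinery is not wasted there, since it is needed anyway for Proposition~\ref{thm:p-bounds}-$(vi)$ and $(vii)$; conversely, your constant $\tfrac43$ makes transparent why this mechanism cannot reach the sharp value $\widehat\a=1$ on $0<\a\le1$. In that range you and the paper rely on the same ingredient, the Hausdorff moment representation of Section~\ref{sec:St1} (you cite Theorem~\ref{thm:St-cm} directly, the paper routes it through Proposition~\ref{thm:p-bounds}-$(v)$); this is legitimate and non-circular, as those results are proved independently of Theorem~\ref{thm:alt}. Your positivity induction and the concluding Alternating Series Test step coincide with the paper's, with the minor bonus that you also verify $a_n\to0$ explicitly via $p_{n+1}(\a)\le\widehat\a^{\,n}p_1(\a)$.
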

The proof of Theorem~\ref{thm:alt} is given in Section \ref{sec:app}.

We summarize what can be seen from the numerical calculations in the following.
\begin{thm}[Numerical results]\label{thm:numric} $ $
\begin{itemize}
\item[(i)] $\a^*\approx 2.29965\, 64432\, 53461\, 30332$.	
\item[(ii)] For $0<\a<\a^*$ we have $\varphi_\a(s)>0$ for $s\ge 0$.	
\item[(iii)] $\varphi_{\a^*}(s)\ge 0$ for $s\ge 0$ and it  has a unique zero  of multiplicity two at $s^*\approx  5.27004\, 87522\, 76132\, 37103$.	
\item[(iv)] For $\a^*<\a$, $\varphi_\a$  has a finite number of positive zeros $0<s_1(\a)<s_2(\a)<\ldots<s_n(\a)$ which are all simple with the exception that the last can be double. 	 
\item[(v)]  $s_1(\a)$ is a simple zero with $\varphi_\a'(s_1(\a))<0$, moreover
	$s_1(\a)$ is a decreasing function on $]\a^*,\infty[$.
\end{itemize}
\end{thm}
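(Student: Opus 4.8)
The assertions split into two kinds. The structural ones — finiteness of the positive zeros and their splitting in (iv), and the sign of $\varphi_\a'$ at the first zero in (v) — I would argue analytically as far as possible; the explicit constants $\a^*,s^*$ together with the global positivity statements (i)--(iii) and the monotonicity of $s_1$ rest on certified computation. The computational engine is the pair \eqref{eq:poly-rec}, \eqref{eq:ent1}: generate the $p_n(\a)$ in high-precision interval arithmetic and evaluate $\varphi_\a(s)$ and $\varphi_\a'(s)$ (differentiating \eqref{eq:ent1} termwise) by means of Theorem~\ref{thm:alt}, whose Alternating Series Test supplies, once $n\ge\widehat\a s$, a rigorous two-sided enclosure of each truncation error. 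Every numerical value below is therefore to be read as a certified enclosure.

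The key analytic step is to confine everything to a bounded $s$-interval. From \eqref{eq:f1z} one obtains near the branch point $z=0$ the local expansion $f_\a(z)=(e^\a-1)+\a z\Log z-\a z^2-\ldots$, whose lowest-order singular term is $\a z\Log z$. Deforming the contour in \eqref{eq:phi1} into a Hankel-type contour hugging $]-\infty,0]$ and integrating the jump of this term across the cut against $e^{sz}$ yields $\varphi_\a(s)=\a\,s^{-2}\bigl(1+o(1)\bigr)$ as $s\to+\infty$, with an effective remainder. Because the leading coefficient $\a$ is positive, for each $\a>0$ this produces an explicit $S(\a)$ with $\varphi_\a(s)>0$ for $s>S(\a)$. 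Combined with the order-one, type-one property (Theorem~\ref{thm:final}(v)), which forces only finitely many zeros in any bounded set, this already gives the finiteness in (iv) and reduces (ii) and (iii) to controlling signs on $[0,S(\a)]$.

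On $[0,S(\a)]$ I would certify positivity not merely pointwise but on a grid together with an enclosure of $\varphi_\a'$, so that a finite computation proves $\varphi_\a(s)>0$ throughout; carrying this out for $\a$ just below the conjectured value and exploiting monotonicity in $\a$ gives (ii). The critical pair $(\a^*,s^*)$ of (i) and (iii) I would locate as the solution of the $2\times2$ system $\varphi_\a(s)=0,\ \partial_s\varphi_\a(s)=0$ by an interval Newton method, which certifies existence, uniqueness and the printed digits at once; verifying $\partial_s^2\varphi_{\a^*}(s^*)>0$ then confirms that $s^*$ is a zero of multiplicity exactly two and a local minimum, and the reduction above rules out further zeros, yielding (iii). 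For (iv) and (v), continuity of the zeros in $\a$ shows the double zero at $(\a^*,s^*)$ splitting into two simple positive zeros as $\a$ increases; and for the monotonicity of $s_1$ I would use implicit differentiation, $s_1'(\a)=-\partial_\a\varphi_\a(s_1)/\partial_s\varphi_\a(s_1)$, with $\partial_s\varphi_\a(s_1)<0$ forced by $s_1$ being the first exit from positivity (so that $\varphi_\a'(s_1)<0$, which is (v)) and a computed sign for $\partial_\a\varphi_\a(s_1)$.

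The main obstacle is the passage from finitely many certified evaluations to statements holding for the continuum of $s\in[0,S(\a)]$ and for a whole interval of $\a$: this is where rigor is easiest to lose, and it demands either consistent interval arithmetic with genuine derivative enclosures throughout or an analytic convexity/monotonicity argument to interpolate between grid nodes. A secondary difficulty is making the remainder in $\varphi_\a(s)\sim\a s^{-2}$ explicit and uniform enough to pin down a workable $S(\a)$, since the subleading terms carry powers of $\Log s$.
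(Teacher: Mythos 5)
You should first be aware that the paper contains no rigorous proof of Theorem~\ref{thm:numric} to compare against: it is explicitly offered as a summary of ``what can be seen from the numerical calculations''. The paper's entire support for it is the engine you also adopt --- Maple evaluation of the truncated series \eqref{eq:ent1} with coefficients from \eqref{eq:poly-rec}, the Alternating Series Test of Theorem~\ref{thm:alt} as truncation-error control, the graphs in Figures 1--6, and the location of $(\a^*,s^*)$ by scanning for the least $\a$ at which $\varphi_\a$ touches zero, refined via $\varphi_{\a^*}'(s^*)=0$. Everything else in your proposal goes beyond the paper, and the key analytic ingredient is correct: $\varphi_\a(s)=\a s^{-2}(1+o(1))$ as $s\to+\infty$ does hold, and in fact you can get it more cheaply from the paper's own results rather than from a Hankel deformation of \eqref{eq:phi1}: for $\a>1$, apply Watson's lemma to the completely monotonic term in \eqref{eq:phi7} (its integrand behaves like $\a\pi x$ as $x\to 0^+$), while the correction $\Phi(\a,s)$ of \eqref{eq:Phi} is $O(e^{-s/\a})$ because $\Rea z\le -1/\a$ on $\partial D(-1,1-1/\a)$ and $h_\a$ is bounded there; for $0<\a\le 1$ do the same with \eqref{eq:St1} and Corollary~\ref{thm:repa=1}. (Two small points: finiteness of zeros in a bounded set needs only analyticity, not the order/type result of Theorem~\ref{thm:final}-$(v)$; and the ``$\ge 0$'' halves of (ii) and (iii) are already rigorous consequences of the paper's Introduction, where the Bernstein-function argument $h_{c\a}=h_\a^c$ shows that $\{\a>0:\varphi_\a\ge 0 \text{ on } [0,\infty[\,\}$ is an interval $]0,\a^*]$; the genuinely numerical content is the digits, the strictness, and the zero structure.) So, if carried out, your scheme would turn (i), (iii) and compact-$\a$-range versions of (ii) into certified statements, which is strictly more than the paper establishes.

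Two points in your sketch do need repair. First, ``exploiting monotonicity in $\a$'' for (ii) is not available as stated: $\varphi_\a(s)$ is not pointwise monotone in $\a$ (by \eqref{eq:phia(0)}, $\varphi_\a(0)=e^\a\a/2$ increases in $\a$, whereas $\varphi_\a(s^*)$ decreases through $0$ as $\a$ crosses $\a^*$). The correct tool is the interval property just mentioned, which yields $\varphi_\a\ge 0$ for all $\a$ below any certified $\a_0$; the strict inequality claimed in (ii) then still requires either the explicit positive representations \eqref{eq:St1}, \eqref{eq:St1_1} for $\a\le 1$, or certified positivity over the compact $(\a,s)$-boxes for $1<\a<\a^*$. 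Second, parts (iv) and (v) quantify over the unbounded range $\a\in\,]\a^*,\infty[$; no finite amount of certified computation --- yours or the paper's --- proves them in that generality, and your Hurwitz-type splitting at $(\a^*,s^*)$ and the implicit-differentiation formula for $s_1'(\a)$ only control $\a$ near $\a^*$ or in compact ranges. This is a limitation you share with the paper (which is precisely why the paper labels the theorem ``Numerical results''), but your write-up should not present (iv) and (v) as fully proved.
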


Below we present several graphs that support the claims in Theorem \ref{thm:numric}.




\newcommand{\wid}{11cm}

\begin{figure}[th]
	\begin{center}
		\fbox{\includegraphics[width=\wid,keepaspectratio]{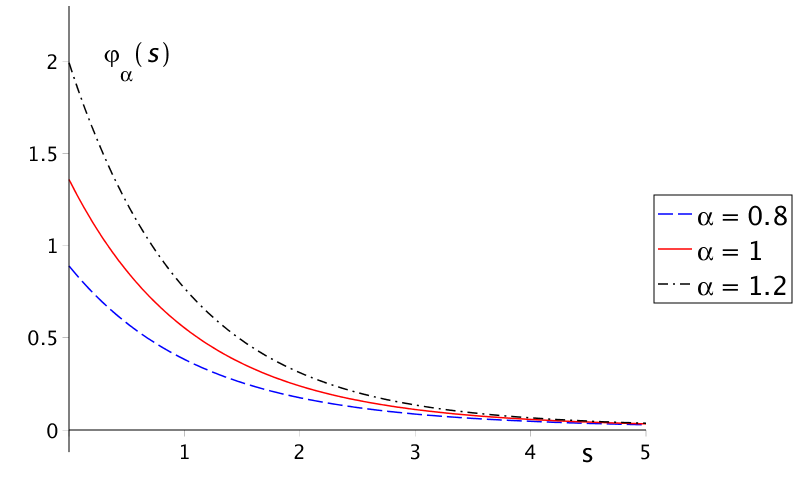}}
	\end{center}
	\caption{$\varphi_\a$ near $\a=1$}
	\label{fig_phi1}
\end{figure}
\begin{figure}[th]
	\begin{center}
	\fbox{\includegraphics[width=\wid,keepaspectratio]{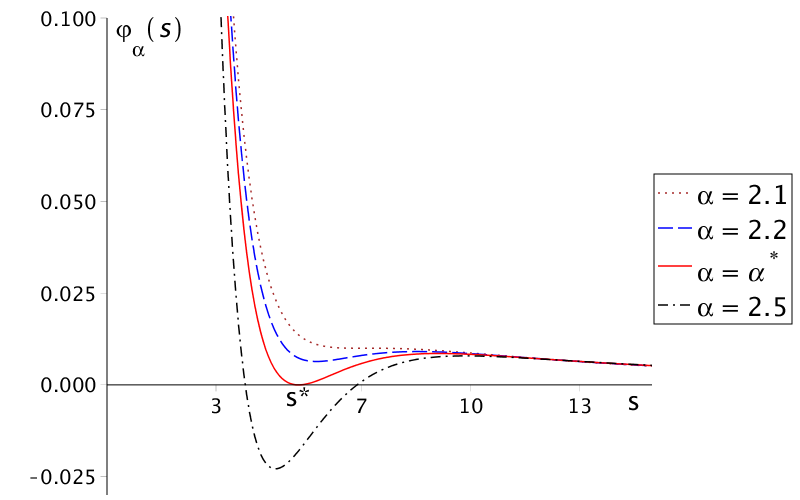}}
	\end{center}
	\caption{$\varphi_\a$ below, at, and above $\a^*$}
	\label{fig_phi*}
\end{figure}
\begin{figure}[th]
	\begin{center}
		\fbox{\includegraphics[width=\wid,keepaspectratio]{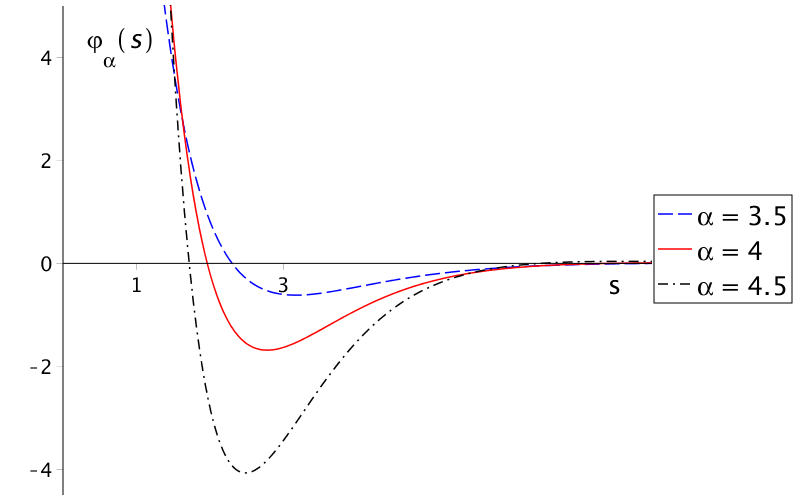}}
	\end{center}
	\caption{$s_1(\a)$ decreases}
	\label{fig_s_dec}
\end{figure}
\begin{figure}[th]
	\begin{center}
		\fbox{\includegraphics[width=\wid,keepaspectratio]{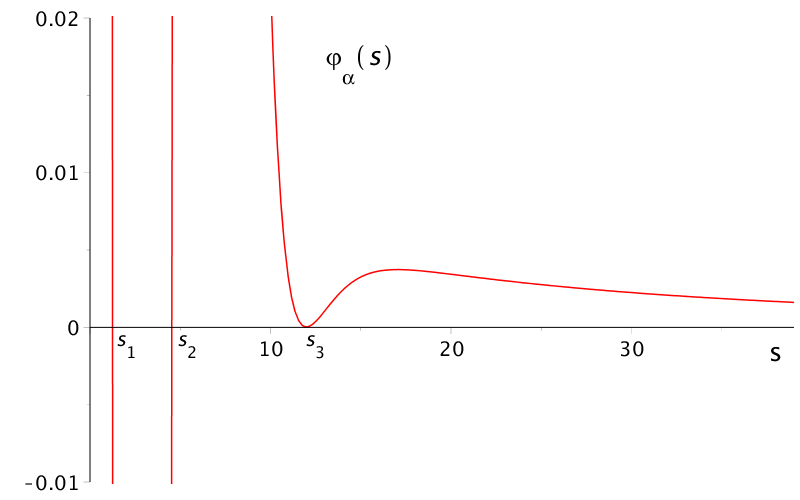}}
	\end{center}
	\caption{Formation of a third zero ($\a\approx 5.988$) }
	\label{fig_2ndzer}
\end{figure}
\begin{figure}[th]
	\begin{center}
		\fbox{\includegraphics[width=\wid,keepaspectratio]{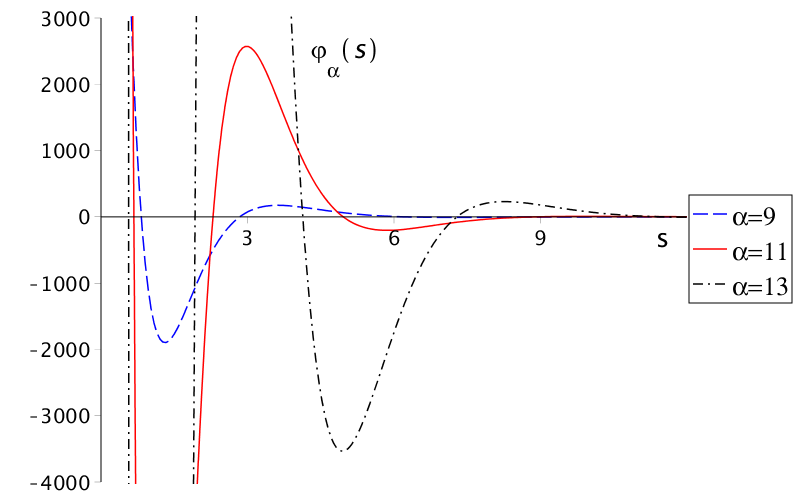}}
	\end{center}
	\caption{Increasing oscillations at larger $\a$ }
	\label{fig_phiosc}
\end{figure}
\begin{figure}[th]
	\begin{center}
		\fbox{\includegraphics[width=\wid,keepaspectratio]{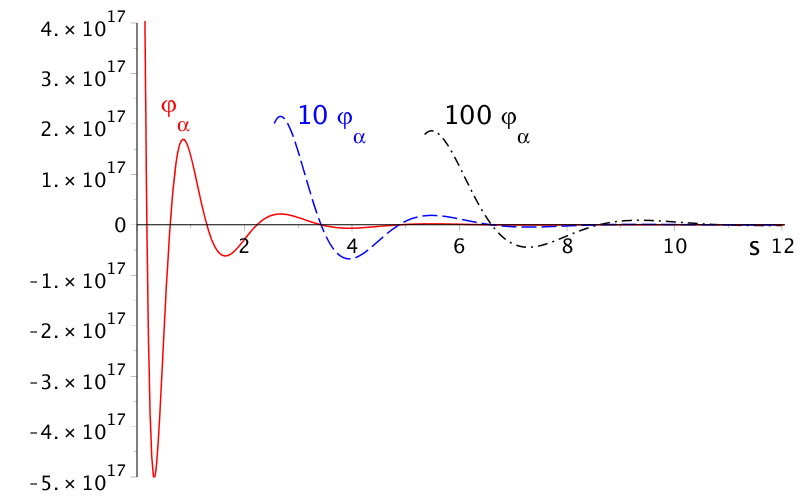}}	
	\end{center}
	\caption{Oscillations at  $\a=40$ }
	\label{fig_phiosc2}
\end{figure}




In Figure \ref{fig_phi1} the graph of ${\varphi}_\a$ is sketched for the  values  $\a=0.8,\ 1,\ 1.2$: in these cases ${\varphi}_\a$ is strictly positive (see Property $(ii)$) and, for $\a=0.8,\ 1$, also completely monotonic, as stated in Theorem \ref{thm:St-cm}. 

In Figure \ref{fig_phi*} one can see the graph of  ${\varphi}_\a$ for the value  $\a=\a^*$ given in $(i)$, where it presents a unique zero $s^*$, which is also a global minimum, as described in $(iii)$. On the other hand,  ${\varphi}_\a$ is still strictly positive for $\a<\a^*$ and has a  region of negative values between two simple zeros for $\a>\a^*$. 

As $\a$ increases, the first zero $s_1(\a)$ decreases as described in $(v)$ (Figure \ref{fig_s_dec}). For $\a\approx 5.988$ (Figure \ref{fig_2ndzer}) a new double zero appears on the right of $s_1$ and $s_2$, and  then more and more oscillations appear by the same mechanism (Figure \ref{fig_phiosc}). In Figure \ref{fig_phiosc2}, for instance, $\varphi_{40}$ is represented with 3 different scales, and one can see at least 10 zeros.

The graphs are obtained in Maple by truncating the series \eqref{eq:ent1}, taking into consideration Theorem \ref{thm:alt}. The approximated values of $\a^*,\, s^*$ given in Theorem \ref{thm:numric}-$(i,\,iii)$ are also obtained from 
the truncated series by seeking the minimal value  $\a^*$ for which $\varphi_{\a^*}$ is zero at some $s^*>0$. The approximation for $s^*$ is then improved using the fact that $\varphi_{\a^*}'(s^*)=0$.

\section{The family $\varphi_\a, \a\in\C$ and the polynomials $p_n(\a)$}\label{sec:fa}
From \eqref{eq:f1z} it is easy to see that 
\begin{equation}\label{eq:fa(0)}
\lim_{z\in\mathcal A, z\to 0}f_\a(z)=e^\a-1.
\end{equation}
Moreover, $h_0(x)=1$ for all $x>0$, so $h_0$ is both completely monotonic and a Bernstein function, while
$h_{-\a}(x)$ is completely monotonic for all $\a>0$ because of \cite[Proposition 9.2]{B:F}, where we use that
$$
x\log(1+1/x)=\int_0^1\left(1-\frac{u}{u+x}\right)\,du,\quad x>0
$$
is a Bernstein function.  In particular $h_\a$ is not a Bernstein function when $\a<0$.
This means that the set of $\a\in\R$ such that $h_\a$ is a Bernstein function is the closed interval $[0,\a^*]$.

For an open set $G\subseteq \C$ we denote by $\mathcal H(G)$ the set of holomorphic functions
defined in $G$.

 Clearly $f_\a\in\mathcal H(\mathcal A)$, but we shall see that $f_\a$ extends to a holomorphic function in $\C\setminus[-1,0]$.
In fact, for $z\neq 0, |z|<1$ we have
\begin{equation}\label{eq:g}
g_\a(z):=e^\a-\exp(\a z^{-1}\Log(1+z))=e^\a -\exp(\a(1-z/2+z^2/3-\cdots)),
\end{equation}
so defining $g_\a(0)=0$, we see that $g_\a\in\mathcal H(\D)$, where
$$
\D:=\{z\in\C:|z|<1\}.
$$
Now $f_\a(z):=g_\a(1/z)$ for $|z|>1$ yields a holomorphic extension of $f_\a$ to $\C\setminus [-1,0]$.

In the next two results we obtain suitable power series expansions for $g_\a$, $f_\a$ and $h_\a$. 
\begin{prop}\label{thm:power-g}
The power series of $g_\a\in\mathcal H(\D)$ given  by \eqref{eq:g} can be written
\begin{equation*} 
g_\a(z)=e^\a\sum_{n=1}^\infty (-1)^{n-1}p_n(\a)z^n,
\end{equation*}
where $(p_n(\a))_{n\ge 0}$ is the sequence of polynomials defined in \eqref{eq:poly1} and \eqref{eq:poly-rec}.
\end{prop}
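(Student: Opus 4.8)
The plan is to reduce the claim to an identity between Taylor coefficients and then to recognize the defining recursion \eqref{eq:poly-rec} via logarithmic differentiation. First I would isolate the constant term in the exponent of \eqref{eq:g}. Using $z^{-1}\Log(1+z)=\sum_{k\ge 0}(-1)^k z^k/(k+1)$, which is holomorphic in $\D$ with value $1$ at the origin, I split off the $k=0$ term to obtain
\[
g_\a(z)=e^\a\bigl(1-G_\a(z)\bigr),\qquad G_\a(z):=\exp\Bigl(\a\sum_{k=1}^\infty \frac{(-1)^k}{k+1}z^k\Bigr).
\]
Since the exponent is holomorphic in $\D$ and $\exp$ is entire, $G_\a\in\mathcal H(\D)$ with $G_\a(0)=1$; write $G_\a(z)=\sum_{n\ge 0}a_n(\a)z^n$, so that $a_0=1$. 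Comparing with the asserted expansion, the whole proposition is equivalent to the single statement $a_n(\a)=(-1)^n p_n(\a)$ for all $n\ge 0$.

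Next I would derive a recursion for the $a_n$ by logarithmic differentiation, which is legitimate because the power series for $\log G_\a$ converges in $\D$ and may be differentiated term by term. From $\log G_\a(z)=\a\sum_{k\ge 1}(-1)^k z^k/(k+1)$ one gets $G_\a'(z)=G_\a(z)B(z)$ with
\[
B(z)=\a\sum_{j=0}^\infty \frac{(-1)^{j+1}(j+1)}{j+2}\,z^{j}.
\]
Extracting the coefficient of $z^m$ on both sides (a Cauchy product on the right) yields the linear recursion
\[
(m+1)\,a_{m+1}=\a\sum_{k=0}^m \frac{(-1)^{k+1}(k+1)}{k+2}\,a_{m-k},\qquad m\ge 0.
\]

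Finally I would substitute the ansatz $a_n=(-1)^n p_n$ and check that \eqref{eq:poly-rec} drops out. The crucial simplification is that the signs decouple from the summation index: $(-1)^{m-k}(-1)^{k+1}=(-1)^{m+1}$ is independent of $k$, so the common factor $(-1)^{m+1}$ cancels on both sides and one is left with exactly $p_{m+1}=\frac{\a}{m+1}\sum_{k=0}^m \frac{k+1}{k+2}p_{m-k}$. Together with the base case $a_0=1=p_0(\a)$ this proves $a_n=(-1)^n p_n$ by induction, and comparing the lowest coefficients against \eqref{eq:poly1} (that is, $p_1=\a/2$ and $p_2=\a/3+\a^2/8$) confirms the normalization. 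I do not expect a genuine obstacle here: the only real point of care is the sign bookkeeping, both in the reindexing $k\mapsto k-1$ of the logarithmic derivative and in the sign-collapse above. Holomorphicity in $\D$, which justifies every manipulation, has already been established around \eqref{eq:g}.
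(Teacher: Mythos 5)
Your proof is correct, and it takes a route that is genuinely different in its machinery from the paper's, even though both start from the same decomposition $g_\a(z)=e^\a\bigl(1-\exp\bigl(\a\sum_{k\ge1}(-1)^kz^k/(k+1)\bigr)\bigr)$. The paper invokes the exponential Bell partition polynomials: it quotes the expansion $\exp\bigl(\sum_{k\ge1}a_kz^k/k!\bigr)=\sum_{n\ge0}B_n(a_1,\ldots,a_n)z^n/n!$ together with the known recursion $B_{n+1}=\sum_{k=0}^n\binom{n}{k}B_{n-k}\,a_{k+1}$ (cited from the combinatorics literature), specializes to $a_k=(-1)^k\a k!/(k+1)$, and \emph{defines} $p_n(\a)=(-1)^nB_n(a_1,\ldots,a_n)/n!$ as in \eqref{eq:pnBn}, then checks that this definition reproduces \eqref{eq:poly1} and \eqref{eq:poly-rec}. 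You instead prove the needed coefficient recursion from scratch: writing $G_\a=\exp H$ with $H$ holomorphic in $\D$, the identity $G_\a'=H'G_\a$ plus a Cauchy product gives $(m+1)a_{m+1}=\a\sum_{k=0}^m\frac{(-1)^{k+1}(k+1)}{k+2}a_{m-k}$, and the sign-collapse $(-1)^{k+1}(-1)^{m-k}=(-1)^{m+1}$ turns this into exactly \eqref{eq:poly-rec} under the ansatz $a_n=(-1)^np_n$; induction from $a_0=1=p_0$ finishes it. The two arguments are mathematically equivalent (the Bell recursion is itself proved by this same logarithmic differentiation), so what differs is what each buys: your version is self-contained and elementary, needing no combinatorial identities beyond the chain rule and coefficient extraction; the paper's version yields the explicit closed identification of $p_n$ with Bell polynomials, which is not mere packaging --- it is reused in the Appendix (Remark~\ref{thm:referee}) to express the coefficients $c_{n,k}$ of $p_n$ through Stirling numbers of the first kind. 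Your sign bookkeeping and the justification of term-by-term differentiation and of the Cauchy product in $\D$ are all in order.
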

\begin{proof} We use the formula
\begin{equation*} 
\exp\left(\sum_{k=1}^\infty \frac{a_k}{k!}z^k\right)=\sum_{n=0}^\infty \frac{B_n(a_1,\ldots,a_n)}{n!}z^n,
\end{equation*}
where $B_n$ are the  exponential Bell partition polynomials, cf. \cite[Section 11.2]{Ch}. It is known that 
$$
B_0=1,\quad B_1(a_1)=a_1,\quad B_2(a_1,a_2)=a_1^2+a_2,
$$
and in general we have the recursion formula
$$
B_{n+1}(a_1,\ldots,a_{n+1})=\sum_{k=0}^n\binom{n}{k}B_{n-k}(a_1,\ldots,a_{n-k})a_{k+1}.
$$
Defining $a_k=(-1)^k\a k!/(k+1),\;k\ge 1$, this gives for $z\in\D$
\begin{eqnarray*}
g_\a(z)&=&e^\a-e^\a\exp\left(\sum_{k=1}^\infty(-1)^k\a\frac{z^k}{k+1}\right)\\
&=&-e^\a\sum_{n=1}^\infty\frac{B_n(a_1,\ldots,a_n)}{n!}z^n=e^\a\sum_{n=1}^\infty
(-1)^{n-1}p_n(\a)z^n,
\end{eqnarray*}
where we have defined
\begin{equation}\label{eq:pnBn}
p_n(\a):=(-1)^n\frac{B_n(a_1,\ldots,a_n)}{n!}.
\end{equation}
We see by induction that $p_n(\a)$ is a polynomial in $\a$ of degree $n$ such that \eqref{eq:poly1}
holds, and the recursion \eqref{eq:poly-rec} follows like this
\begin{eqnarray*}
p_{n+1}(\a)&=& (-1)^{n+1}\frac{B_{n+1}(a_1,\ldots,a_{n+1})}{(n+1)!}\\
&=&\frac{1}{n+1}\sum_{k=0}^n \frac{1}{k!}(-1)^{n-k}\frac{B_{n-k}(a_1,\ldots,a_{n-k})}{(n-k)!}
(-1)^{k+1}a_{k+1}\\
&=&\frac{\a}{n+1}\sum_{k=0}^n\frac{k+1}{k+2}p_{n-k}(\a).
\end{eqnarray*}
\end{proof}

\begin{cor}\label{thm:laurent-f} For $|z|>1$ we have the Laurent expansions
\begin{equation}\label{eq:laurent-f}
f_\a(z)=e^\a\sum_{n=1}^\infty (-1)^{n-1}\frac{p_n(\a)}{z^n},\quad h_\a(z)=
e^\a\sum_{n=0}^\infty (-1)^{n}\frac{p_n(\a)}{z^n}
\end{equation} 
and in particular
\begin{equation}\label{eq:f-infty}
f_\a(z)=(\a/2)e^\a z^{-1}+\mathcal O(|z|^{-2}), \quad |z|>1, |z|\to\infty.
\end{equation}
\end{cor}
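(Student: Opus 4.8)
The statement to prove is Corollary~\ref{thm:laurent-f}, asserting the Laurent expansions of $f_\a$ and $h_\a$ for $|z|>1$, together with the leading-order asymptotic \eqref{eq:f-infty}. The plan is to deduce everything directly from Proposition~\ref{thm:power-g} by the substitution $z\mapsto 1/z$, since the holomorphic extension of $f_\a$ to $|z|>1$ was defined precisely by $f_\a(z):=g_\a(1/z)$.

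First I would recall that Proposition~\ref{thm:power-g} gives the power series $g_\a(w)=e^\a\sum_{n=1}^\infty(-1)^{n-1}p_n(\a)w^n$, valid for $w\in\D$, i.e.\ for $|w|<1$. Setting $w=1/z$, which satisfies $|w|<1$ exactly when $|z|>1$, and using the defining relation $f_\a(z)=g_\a(1/z)$, I obtain immediately
\begin{equation*}
f_\a(z)=e^\a\sum_{n=1}^\infty(-1)^{n-1}\frac{p_n(\a)}{z^n},\quad |z|>1,
\end{equation*}
which is the first expansion. For the second, I would use that $f_\a=e^\a-h_\a$, so that $h_\a(z)=e^\a-f_\a(z)$; writing $e^\a=e^\a(-1)^0 p_0(\a)/z^0$ (recall $p_0(\a)=1$) and absorbing it as the $n=0$ term, the sign $(-1)^{n-1}$ in the $f_\a$-series becomes $(-1)^n$ after moving to the other side, yielding
\begin{equation*}
h_\a(z)=e^\a\sum_{n=0}^\infty(-1)^n\frac{p_n(\a)}{z^n},\quad |z|>1.
\end{equation*}
The only point deserving a word of care is the interchange of the constant $e^\a$ with the summation and the bookkeeping of signs; this is purely formal since the series converges absolutely for $|z|>1$ (being the image under $w=1/z$ of a convergent power series on $\D$).

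Finally, the asymptotic \eqref{eq:f-infty} follows by isolating the $n=1$ term of the $f_\a$-series. Since $p_1(\a)=\a/2$ from \eqref{eq:poly1}, the leading term is $(\a/2)e^\a z^{-1}$, and the remaining tail $\sum_{n\ge 2}$ is $\mathcal O(|z|^{-2})$ as $|z|\to\infty$: indeed, for $|z|\ge 2$ say, the series $\sum_{n\ge 2}|p_n(\a)|\,|z|^{-n}$ is bounded by $|z|^{-2}$ times a convergent constant depending only on $\a$, because the coefficients $p_n(\a)$ grow at most at the rate dictated by the radius of convergence $1$ of $g_\a$. I do not expect any genuine obstacle here; the entire corollary is a routine consequence of the already-established Proposition~\ref{thm:power-g}, the main (and essentially only) thing to get right being the correct alignment of indices and signs when passing between the $f_\a$ and $h_\a$ expansions.
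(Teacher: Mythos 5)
Your proof is correct and takes essentially the same route the paper intends: the corollary is stated there without proof precisely because it follows immediately from Proposition~\ref{thm:power-g} via the substitution $w=1/z$ (using the defining relation $f_\a(z)=g_\a(1/z)$ for $|z|>1$), then $h_\a=e^\a-f_\a$ together with $p_0(\a)=1$, and finally $p_1(\a)=\a/2$ for \eqref{eq:f-infty}. Your sign bookkeeping and the tail estimate giving the $\mathcal O(|z|^{-2})$ term are both sound, so there is nothing to add.
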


In the following lemma we study the restriction of the function $f_\a$ to the imaginary axis. 
\begin{lem}\label{thm:L2} 
Let $\a\in\C\setminus\{0\}$. As a function of $y\in\R$
\begin{equation*} 
F_\a(y):= \left\{ \begin{array}{lll} f_\a(iy)=e^\a -\left(1+y^{-2}\right)^{i\a y/2}\exp(\a y\Arctan(1/y)),&y\neq 0,\\e^\a-1, &y= 0,
\end{array}\right.
\end{equation*}
is continuous and tends to 0 for $|y|\to\infty$. It belongs to
$L^2(\R)\setminus L^1(\R)$. 	
\end{lem}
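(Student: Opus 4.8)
The plan is to read off all three assertions from expansions already established: the holomorphic extension of $f_\a$ together with the limit \eqref{eq:fa(0)} will handle continuity, while the Laurent expansion \eqref{eq:f-infty} will pin down the exact rate of decay at infinity, from which both the $L^2$ membership and the failure of $L^1$ membership follow at once.

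For continuity, I would first observe that $iy\in\mathcal A$ for every $y\neq 0$, so $F_\a(y)=f_\a(iy)$ is the restriction of $f_\a\in\mathcal H(\mathcal A)$ to the punctured imaginary axis and is therefore continuous (indeed real-analytic as a $\C$-valued function of $y$) on $\R\setminus\{0\}$; the explicit formula for $F_\a$ is merely the modulus/argument decomposition of this restriction, obtained by writing $\Log(1-i/y)=\tfrac12\log(1+y^{-2})-i\Arctan(1/y)$. At $y=0$ the point $iy$ approaches $0$ within $\mathcal A$, so \eqref{eq:fa(0)} gives $\lim_{y\to 0}F_\a(y)=e^\a-1=F_\a(0)$, which is continuity at the origin. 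Concretely, in the explicit formula both exponents vanish as $y\to0$, since $\a y\Arctan(1/y)\to 0$ and $\tfrac{\a y}{2}\log(1+y^{-2})\to 0$ (using $y\log|y|\to 0$), so $h_\a(iy)\to 1$.

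For the behaviour at infinity and the integrability, I would substitute $z=iy$ into \eqref{eq:f-infty}. Since the coefficient of $z^{-1}$ there is $(\a/2)e^\a$, nonzero precisely because $\a\neq 0$, this yields
\[
F_\a(y)=-\frac{i\a e^\a}{2}\,\frac1y+\mathcal O(|y|^{-2}),\qquad |y|\to\infty,
\]
so in particular $F_\a(y)\to 0$. Writing $C:=|\a e^\a|/2>0$, the asymptotics give $|F_\a(y)|\le 2C/|y|$ and $|F_\a(y)|\ge C/(2|y|)$ for all sufficiently large $|y|$. The upper bound makes $|F_\a|^2$ dominated by a multiple of $y^{-2}$ near infinity, which together with continuity (hence boundedness) on compact intervals shows $F_\a\in L^2(\R)$. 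The lower bound shows $|F_\a(y)|$ exceeds a constant multiple of $1/|y|$ for large $|y|$, whose integral over $[T,\infty[$ diverges, so $F_\a\notin L^1(\R)$.

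The only genuine point is that the decay is exactly of order $1/|y|$: this borderline rate is what simultaneously forces square-integrability and non-integrability, and the hypothesis $\a\neq 0$ enters solely to guarantee that the leading Laurent coefficient $(\a/2)e^\a$ does not vanish. Everything else is a direct consequence of the expansions already in hand, so I do not anticipate any serious obstacle beyond bookkeeping the $\mathcal O(|y|^{-2})$ remainder.
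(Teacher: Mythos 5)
Your proposal is correct and follows essentially the same route as the paper: the paper likewise obtains the explicit formula from the decomposition $\Log(1-i/y)=\tfrac12\log(1+y^{-2})-i\Arctan(1/y)$, deduces continuity (with \eqref{eq:fa(0)} handling $y=0$), and reads off both the decay and the $L^2\setminus L^1$ membership from Corollary~\ref{thm:laurent-f}, i.e.\ from the expansion \eqref{eq:f-infty} with nonvanishing leading coefficient $(\a/2)e^\a$. You have merely made explicit the bookkeeping (two-sided bounds $C/(2|y|)\le|F_\a(y)|\le 2C/|y|$) that the paper leaves implicit.
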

\begin{proof} We have for $y\neq 0$
$$
\exp(i\a y\Log(1-i/y))=\exp(i\a y[\log\sqrt{1+y^{-2}}-i\Arctan(1/y)]),
$$
where $\Arctan:\R\to ]-\pi/2,\pi/2[$ is the inverse of $\tan$. The continuity of $F_\a$ for $y=0$ follows, and the behavior at $\pm\infty$ including the integrability properties follows from Corollary~\ref{thm:laurent-f}.
\end{proof}

By Plancherel's Theorem  $F_\a$ is the Fourier-Plancherel transform of another $L^2$-function $G_\a$:
\begin{equation}\label{eq:Plan1}
f_\a(iy)=\lim_{R\to\infty}\int_{-R}^R G_\a(s)e^{-iys}\,ds,\quad y\in \R,
\end{equation}
where the limit is in $L^2(\R)$,
and by the inversion theorem $G_\a$ is given as
\begin{equation}\label{eq:Plan}
G_\a(s)=\lim_{R\to\infty}\frac{1}{2\pi}\int_{-R}^R f_\a(iy)e^{iys}\,dy,\quad s\in\R,
\end{equation} 
where again the limit is in $L^2(\R)$. For certain sequences $R_n\to \infty$ we also know that
\begin{equation}\label{eq:Plan2}
\lim_{n\to\infty} \frac{1}{2\pi}\int_{-R_n}^{R_n} f_\a(iy)e^{iys}\,dy=G_\a(s)
\end{equation}
for almost all $s\in\R$.
Furthermore,
\begin{equation}\label{eq:Pl}
\frac{1}{2\pi}\Int |f_\a(iy)|^2\,dy=\Int |G_\a(s)|^2\,ds.
\end{equation}

\begin{rem} \label{thm:remark1}
 {\rm The above formulas \eqref{eq:Plan1}-\eqref{eq:Pl} hold trivially for $\a=0$ with $F_0=G_0=0$.}
\end{rem}
\medskip

\begin{lem}\label{thm:s<0}
We have $G_\a(s)=0$ for $s<0$.
\end{lem}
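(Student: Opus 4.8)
The plan is to show that $G_\a(s)=0$ for $s<0$ by a standard contour-shifting argument exploiting the analyticity of $f_\a$ in the left half-plane together with its decay. The key observation is that, by Lemma~\ref{thm:L2} and Corollary~\ref{thm:laurent-f}, $f_\a(z)$ is holomorphic in $\C\setminus[-1,0]$, and in particular in the region $\Rea z<0$ away from the cut $[-1,0]$; moreover $f_\a(z)=\mathcal O(|z|^{-1})$ as $|z|\to\infty$ by \eqref{eq:f-infty}. First I would fix $s<0$ and rewrite the defining integral \eqref{eq:Plan} as an integral over the vertical segment $\{iy:-R\le y\le R\}$ of the function $f_\a(z)e^{sz}$, parametrized by $z=iy$ (up to the constant $1/(2\pi)$ and an extra factor $i$ from $dz=i\,dy$).

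Next I would close the contour to the \emph{left}, i.e.\ in the half-plane $\Rea z<0$, since for $s<0$ the exponential factor $e^{sz}=e^{s\Rea z}e^{is\Ima z}$ decays as $\Rea z\to-\infty$. Concretely, I would consider the rectangular contour with vertical sides at $\Rea z=0$ and $\Rea z=-c$ (for any $c>1$, so as to stay clear of the cut $[-1,0]$, which lies in $-1\le\Rea z\le 0$; in fact one must route the left contour around or beyond the cut) and horizontal sides at $\Ima z=\pm R$. Since $f_\a$ is holomorphic in the relevant region and the integrand has no singularities there, Cauchy's theorem gives that the integral over the right (imaginary-axis) side equals the sum of the integrals over the remaining three sides. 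The aim is to show that, as $R\to\infty$ and $c\to\infty$, these remaining contributions vanish, leaving $G_\a(s)=0$.

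The estimates split into two parts. On the far-left vertical side $\Rea z=-c$, the factor $|e^{sz}|=e^{-sc}$ with $s<0$ gives $e^{sc}\to 0$ as $c\to\infty$, while $f_\a$ stays bounded (tending to $e^\a-1$ near the cut and to $e^\a$ at infinity, hence bounded on any such vertical line that avoids the cut). On the horizontal sides $\Ima z=\pm R$, one uses $|f_\a(z)|=\mathcal O(1/R)$ from \eqref{eq:f-infty} together with $|e^{sz}|\le 1$ (since $s<0$ and $\Rea z\le 0$), so that each horizontal contribution is $\mathcal O(c/R)\to 0$ as $R\to\infty$ for fixed $c$. Combining these, the imaginary-axis integral tends to $0$; matching this against \eqref{eq:Plan}, which identifies the limit (along a suitable sequence $R_n$, as in \eqref{eq:Plan2}) with $G_\a(s)$ for almost all $s<0$, yields $G_\a(s)=0$ a.e., and hence everywhere for $s<0$ after adjusting on a null set or invoking continuity.

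I expect the main obstacle to be the careful handling of the cut $[-1,0]$ when closing the contour to the left: one cannot sweep a single vertical line all the way to $\Rea z=-\infty$ without crossing the cut, so the contour must either be indented around $[-1,0]$ or, more cleanly, one fixes $c>1$ first (keeping the left side at $\Rea z=-c$, still to the left of $-1$ but the rectangle $[-c,0]\times[-R,R]$ does contain the cut), and instead argues that $f_\a$ extended across the cut — or the contour $C(r,c)$ from Theorem~\ref{thm:intro1} — gives the residue-free setup. A subtlety is that the Plancherel limit in \eqref{eq:Plan} is only an $L^2$-limit, so to pass to a genuine pointwise identity I would rely on \eqref{eq:Plan2}, which gives the pointwise limit along a sequence $R_n\to\infty$ for almost all $s$; the contour estimate then shows this pointwise limit is $0$ for a.e.\ $s<0$, which is enough to conclude $G_\a=0$ on $]-\infty,0[$ as an element of $L^2$.
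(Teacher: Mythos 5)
Your proposal contains a fatal sign error that invalidates the whole contour-shifting scheme. For $s<0$ the modulus $|e^{sz}|=e^{s\Rea z}$ \emph{grows} as $\Rea z\to-\infty$ (negative times negative is positive), so the exponential factor does not decay in the left half-plane; it decays in the \emph{right} half-plane. Concretely, on your far-left vertical side $\Rea z=-c$ the correct modulus is $|e^{sz}|=e^{-sc}=e^{|s|c}\to\infty$ as $c\to\infty$ (you wrote that this ``gives $e^{sc}\to 0$'', which is the wrong quantity), and on the horizontal sides with $-c\le\Rea z\le 0$ one has $|e^{sz}|=e^{s\Rea z}\ge 1$, not $\le 1$. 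So none of your three boundary estimates holds, and no indentation of the contour around the cut $[-1,0]$ can repair this: the obstruction is exponential growth of $e^{sz}$, not (only) the singularities. The cut is a second, independent reason why the left half-plane is the wrong place to work, and the fact that you had to fight it should have been a hint to close the contour on the other side.

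The paper's proof closes the contour to the right instead: it takes the positively oriented half-circle $C_R=[-iR,iR]\cup\{Re^{it}:-\pi/2\le t\le\pi/2\}$, where $f_\a(z)e^{sz}$ is holomorphic (the cut $[-1,0]$ lies in the left half-plane) and continuous up to the imaginary axis, so Cauchy's theorem equates the segment integral $\int_{-R}^R f_\a(iy)e^{isy}\,i\,dy$ with the integral over the arc. On the arc, \eqref{eq:f-infty} gives $|f_\a(Re^{it})|\le C/R$, and for $s<0$ the factor $e^{sR\cos t}$ tends to $0$ for every $|t|<\pi/2$, so dominated convergence kills the arc integral as $R\to\infty$. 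Combining this with the a.e.\ pointwise limit \eqref{eq:Plan2} along a sequence $R_n\to\infty$ gives $G_\a(s)=0$ for almost all $s<0$, after which one chooses the representative vanishing on $]-\infty,0[$. Your final paragraph's use of \eqref{eq:Plan2} to upgrade the $L^2$-limit to an a.e.\ pointwise identification is the right idea and matches the paper; it is only the complex-analytic half of your argument that goes in the wrong direction.
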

\begin{proof} Let $C_R$ denote the half-circle with radius $R$
$$
C_R=[-iR,iR]\cup \{Re^{it}:-\pi/2\le t\le \pi/2\},
$$
considered as a positively oriented closed contour.

Since $f_\a(z)e^{sz}$ is holomorphic in $\Rea z>0$ with a continuous extension to the closed right half-plane bounded by the vertical line $i\R$,
we have by Cauchy's integral theorem 
$$
\int_{-R}^R f_\a(iy)e^{isy}i\,dy=\int_{-\pi/2}^{\pi/2} f_\a(Re^{it})e^{sRe^{it}}Re^{it}i\,dt.
$$
The absolute value of the integrand to the right is by \eqref{eq:f-infty} bounded by
$$
\frac{C}{R}e^{sR\cos(t)}R
$$
for a suitable $C>0$ depending on $\a$. If we assume $s<0$, then $sR\cos t\to-\infty$ for $R\to\infty$ when
$-\pi/2<t<\pi/2$, so the integral to the right tends to 0 by dominated convergence. Using \eqref{eq:Plan2} we now see that $G_\a(s)=0$ for almost all $s<0$. Since $G_\a$ is an equivalence class of square integrable functions, we can assume that $G_\a(s)=0$ for $s<0$.
\end{proof}

Exploiting the holomorphy of $f_\a$ in $\C\setminus[-1,0]$ we can  prove part of Theorem \ref{thm:intro1}, which is contained in the following.
\begin{thm}\label{thm:cut} 
 For $\a\in\C$, the function $\varphi_\a$ defined in \eqref{eq:phi1}  is an entire function, which is independent of $c>1, r>0$.

Moreover, the function
\begin{equation}\label{eq:phi2}
s\mapsto \left\{ \begin{array}{ll} \varphi_\a(s),\, &\mbox{when}\ 0\le s<\infty,  \\
0,\,&\mbox{when}\, -\infty <s<0,
\end{array}\right.
\end{equation} 
is equal to $G_\a(s)$ for almost all $s\in\R$.
\end{thm}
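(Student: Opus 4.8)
The plan is to establish Theorem~\ref{thm:cut} in two stages, corresponding to its two assertions. For the first assertion—that $\varphi_\a$ is entire and independent of $c>1,r>0$—I would argue as follows. For fixed $c,r$ the contour $C(r,c)$ is a compact set contained in $\C\setminus[-1,0]$ (since $c>1$ means the left edge lies strictly to the left of $-1$, and the contour avoids the segment $[-1,0]$), so $f_\a(z)$ is holomorphic on a neighborhood of the contour by the extension established before Proposition~\ref{thm:power-g}. The integrand $(\a,s,z)\mapsto f_\a(z)e^{sz}$ is jointly continuous and holomorphic in $s$ for each $z$ on the contour; by Morera's theorem applied to the parameter integral (or by differentiating under the integral sign, which is justified because the integrand and its $s$-derivative are continuous on the compact contour), $\varphi_\a$ is entire in $s$. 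For independence of $c$ and $r$: if $C(r,c)$ and $C(r',c')$ are two such rectangles, their difference bounds a region in which $f_\a(z)e^{sz}$ is holomorphic (the enclosed region avoids $[-1,0]$), so Cauchy's theorem gives equality of the two integrals.

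For the second assertion I would connect $\varphi_\a$ to the Fourier--Plancherel transform $G_\a$. The key is that the right vertical edge of $C(r,c)$ is the segment of the imaginary axis from $-ir$ to $ir$, and on this edge $f_\a(z)=f_\a(iy)$, whose Fourier transform is $G_\a$ by \eqref{eq:Plan}. The strategy is to let $r\to\infty$ and show that the contributions of the top, bottom, and left edges of the rectangle vanish, leaving precisely the inversion integral \eqref{eq:Plan} that defines $G_\a$. More concretely, I would fix $s\ge 0$ and estimate each of the three non-vertical edges. On the left edge, where $\Rea z=-c<-1$, the factor $e^{sz}$ is bounded by $e^{-cs}$ while $f_\a(z)$ is bounded (it is continuous on this fixed compact-in-height edge and by \eqref{eq:f-infty} decays); on the top and bottom edges, where $\Ima z=\pm r$, one uses the decay $f_\a(z)=\mathcal O(|z|^{-1})$ from \eqref{eq:f-infty} together with $|e^{sz}|=e^{s\Rea z}\le 1$ for $-c\le\Rea z\le 0$ and $s\ge 0$, so the horizontal edges contribute $\mathcal O((1/r)\cdot c)\to 0$ as $r\to\infty$.

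After these estimates, taking $r\to\infty$ along a suitable sequence $R_n$ leaves only the right vertical edge, and \eqref{eq:phi1} reduces to
\begin{equation*}
\varphi_\a(s)=\frac{1}{2\pi i}\int_{-ir}^{ir}f_\a(z)e^{sz}\,dz
=\frac{1}{2\pi}\int_{-r}^{r}f_\a(iy)e^{isy}\,dy,
\end{equation*}
which, upon passing to the limit, is exactly the inversion formula \eqref{eq:Plan2} for $G_\a(s)$, valid for almost all $s$. For $s<0$ we already know $G_\a(s)=0$ by Lemma~\ref{thm:s<0}, and this matches the definition in \eqref{eq:phi2}; one must separately check that $\varphi_\a(s)$ as defined by the contour integral equals zero there too, or rather that the piecewise function in \eqref{eq:phi2} (which sets the value to zero for $s<0$) agrees with $G_\a$ almost everywhere. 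Since the claim is only ``equal for almost all $s$,'' matching $\varphi_\a$ to $G_\a$ on $[0,\infty[$ via the limiting argument, and using Lemma~\ref{thm:s<0} on $]-\infty,0[$, completes the identification.

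The main obstacle I anticipate is the careful justification of the limit $r\to\infty$: the Fourier--Plancherel inversion \eqref{eq:Plan2} holds only along a particular sequence $R_n$ and only for almost every $s$, whereas the contour integral \eqref{eq:phi1} is $r$-independent for \emph{every} $r$. Reconciling these requires using the already-established $r$-independence of $\varphi_\a$ to pick the convenient sequence $R_n$ from \eqref{eq:Plan2}, rather than an arbitrary $r\to\infty$, and then invoking that $\varphi_\a(s)$ does not depend on which $r$ we use. The edge estimates themselves are routine given \eqref{eq:f-infty}, but one must be attentive that the bound on $f_\a$ along the horizontal edges is uniform in the real part over $[-c,0]$, which follows because $f_\a$ is continuous on the compact rectangle and holomorphic nearby.
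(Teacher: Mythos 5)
Your first assertion (entirety of $\varphi_\a$ via Morera/differentiation under the integral, independence of $c,r$ via Cauchy) is argued exactly as in the paper and is fine. The gap is in the second part, specifically in your treatment of the left edge when you fix $c$ and let $r\to\infty$.

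With $c$ fixed and $r\to\infty$, the left edge of $C(r,c)$ is the segment $\{-c+iy:\ |y|\le r\}$, whose length $2r$ tends to infinity; it is \emph{not} a ``fixed compact-in-height edge'' as you assert. Consequently your estimate ($|e^{sz}|\le e^{-cs}$ times a bound on $f_\a$) gives at best
$$
\left|\frac{1}{2\pi i}\int_{\mathrm{left\ edge}} f_\a(z)e^{sz}\,dz\right|
\le \frac{e^{-cs}}{2\pi}\int_{-r}^{r}|f_\a(-c+iy)|\,dy
= \mathcal O\bigl(e^{-cs}\log r\bigr),
$$
where the last step already uses the decay $f_\a(z)=\mathcal O(1/|z|)$ from \eqref{eq:f-infty}; this bound diverges as $r\to\infty$. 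The left-edge integral does in fact tend to $0$ for $s>0$, but only because of cancellation: one must treat the leading term $(\a/2)e^\a z^{-1}$ of \eqref{eq:f-infty} by a Jordan's-lemma/oscillatory-integral argument (or a further contour shift to $\Rea z=-A$, $A\to\infty$, for the $\mathcal O(1/|z|^2)$ remainder). None of this is in your proposal, so as written the proof is incomplete at its central step.

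The paper avoids this difficulty by a different contour extension: it keeps $r$ \emph{fixed} and appends to $C(r,c)$ two quarter-circles $T_\pm(R)$ of radius $R$ and a rectangle $Q(R)$ with corners $-R\pm ir$, $-c\pm ir$, each contributing zero by Cauchy's theorem. The combined contour is then the vertical segment from $-i(r+R)$ to $i(r+R)$ plus a path $L(R)$ whose only far-left vertical piece has \emph{fixed} height $2r$ at $\Rea z=-R$, contributing $\mathcal O\bigl(r e^{-sR}/R\bigr)$, while the pieces of growing length are arcs on which $|e^{sz}|=e^{sR\cos\theta}$ with $\cos\theta<0$, handled by dominated convergence; then \eqref{eq:Plan2} along a sequence $R_n\to\infty$ finishes, as in your final step. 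Alternatively, your rectangle decomposition can be repaired by exploiting the $c$-independence you already established: for fixed $r$ and $s>0$ first let $c\to\infty$, so the left edge is $\mathcal O\bigl(re^{-sc}/c\bigr)\to 0$ while each horizontal edge is bounded by $\frac{C}{2\pi r}\int_{-\infty}^0 e^{sx}\,dx=\frac{C}{2\pi rs}$ uniformly in $c$, and only afterwards let $r=R_n\to\infty$. With your order of limits, however, the left edge cannot be controlled by absolute-value estimates, and this is a genuine gap.
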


\begin{rem}\label{thm:remark2}
{\rm In the following we denote the function given by \eqref{eq:phi2} as $G_\a$. }
\end{rem}

\begin{proof} It is clear that the function $\varphi_\a$ from \eqref{eq:phi1} is entire,
and also that it is independent of $c>1, r>0$ by Cauchy's integral theorem.

Let $R>c$ and consider the following three positively oriented closed contours: two quarter circles with radius $R$
\begin{eqnarray*}
 T_+(R)&=&\{x+ir:x=-R\ldots 0\}\\
&&\cup\;\{iy:y=r\ldots r+R\}\cup\{ir+Re^{i\theta}:\theta=\pi/2\ldots\pi\}\\
 T_-(R)&=&\{x-ir:x=0\ldots -R\}\\
&&\cup\;\{-ir+Re^{i\theta}:\theta=\pi\ldots 3\pi/2\}\cup\{iy:y=-r-R\ldots -r\}
\end{eqnarray*}
and a rectangle $Q(R)$ with corners $\{-R\pm ir, -c\pm ir\}$.
By Cauchy's integral theorem we have
$$
\frac{1}{2\pi i}\int_{T_\pm(R)} f_\a(z)e^{sz}\,dz=\frac{1}{2\pi i}\int_{Q(R)} f_\a(z)e^{sz}\,dz=
0.
$$
Adding these three integrals to the contour integral in \eqref{eq:phi1}  yields
\begin{eqnarray*}
\varphi_\a(s)&=&\frac{1}{2\pi i}\int_{C(r,c)} f_\a(z)e^{sz}\,dz\\
&=&\frac{1}{2\pi i}\int_{-i(r+R)}^{i(r+R)} f_\a(z)e^{sz}\,dz + \frac{1}{2\pi i}\int_{L(R)}f_\a(z)e^{sz}\,dz,
\end{eqnarray*}
where $L(R)$ is the following contour
$$
\{ir+Re^{i\theta}:\theta=\pi/2\ldots\pi\}\cup\{-R+iy:y=r\ldots -r\}\cup\{-ir+Re^{i\theta}:\theta=\pi\ldots 3\pi/2\}.
$$
For $s>0$ we have
$$
\lim_{R\to\infty} \frac{1}{2\pi i}\int_{L(R)}f_\a(z)e^{sz}\,dz=0.
$$
In fact, for $z\in L(R)$ we have $|z|\ge R$, hence $|f_\a(z)|\le C/R$ for suitable $C>0$ by \eqref{eq:f-infty}. This gives
$$
\left|\frac{1}{2\pi i}\int_{L(R)}f_\a(z)e^{sz}\,dz\right|\le \frac{C}{2\pi R}\left[2re^{-sR}+R\int_{\pi/2}^{3\pi/2} e^{sR\cos\theta}\,d\theta\right],
$$
which tends to 0 for $R\to\infty$ because $\cos\theta<0$ for 
$\pi/2<\theta<3\pi/2$.
 The function
$$
I(R)(s):=\frac{1}{2\pi i}\int_{-i(r+R)}^{i(r+R)} f_\a(z)e^{sz}\,dz
$$
converges to $G_\a(s)$ in $L^2(\R)$ for $R\to\infty$ by \eqref{eq:Plan}, so for a suitable sequence $R_n\to\infty$
we know that $I(R_n)(s)$ converges to $G_\a(s)$ for almost all $s\in\R$. It follows that $\varphi_\a(s)=G_\a(s)$ for almost all $s>0$.  
 Since apriori we only know that $G_\a$ is an equivalence class of square integrable functions, we can use formula \eqref{eq:phi2} as a representative of $G_\a$.
\end{proof}

At this point we are able to prove Theorem \ref{thm:intro2}, that is, to obtain the power series expansion of $\varphi_\a$.
\begin{proof}[Proof of Theorem \ref{thm:intro2}] From \eqref{eq:phi1} and the compactness of the contour $C(r,c)$ we get
$$
\varphi_\a(s)=\sum_{n=0}^\infty \frac{s^n}{n!}\frac{1}{2\pi i}\int_{C(r,c)}f_\a(z)z^n\,dz.
$$
Using that $f_\a(z)z^n$ is holomorphic outside $[-1,0]$, we can replace the contour $C(r,c)$ by the circle $|z|=R_0$, where $R_0>\sqrt{c^2+r^2}>1$. We next use the Laurent expansion \eqref{eq:laurent-f} and get
\begin{eqnarray*}
\frac{1}{2\pi i}\int_{C(r,c)}f_\a(z)z^n\,dz&=&\sum_{k=1}^\infty e^\a (-1)^{k-1}p_k(\a)\frac{1}{2\pi i}\int_{|z|=R_0} z^{n-k}\,dz\\
&=&e^\a(-1)^n p_{n+1}(\a),
\end{eqnarray*}
which shows \eqref{eq:ent1}.
\end{proof}

In the following proposition we list several properties of the polynomials $p_n$ that appear in the  series \eqref{eq:ent1}. We prove below the Properties $(i)$ through $(iv)$, in particular, Properties $(i)$ through $(iii)$  will be needed in Theorem \ref{thm:main1}, in order to conclude the proof of Theorem \ref{thm:intro1}.
The remaining properties rely partially on the results of Section \ref{sec:St1} and will be proved in  Section \ref{sec:app}. 
 See also Remark \ref{thm:referee} and Equation \eqref{eq:referee1} in  the Appendix, for an alternative expression for the polynomials $p_n$, based on Stirling numbers.

\begin{prop}\label{thm:p-bounds} The polynomials $p_n$ from Theorem~\ref{thm:intro2} satisfy Theorem \ref{thm:alt} and 
\begin{itemize}
\item[(i)] $\displaystyle p_n(\a)=\sum_{k=1}^n c_{n,k}\a^k, \quad n\ge 1$,	
	where 
$c_{n,k}>0$ and 
\\ \hspace*{1cm}{$\displaystyle c_{n,1}=\frac1{(n+1)},\; c_{n,n}=\frac{1}{2^n\,n!}\,.$}	
\item[(ii)] $|p_n(\a)|\le p_n(|\a|),\quad \a\in\C$.	
\item[(iii)] $0\le p_n(\a)\le \left\{ \begin{array}{lll} 1,\, &\mbox{when }\, 0\le \a \le 1, \\
	\a^n,\,&\mbox{when }\, 1 \le \a, \\
	n,\,&\mbox{when }\,0\le \a\le 2, n\ge 1.
	\end{array}\right.$	
\item[(iv)] For $\a,\b\in\C$ and $n\ge 0$ we have the addition formula
	$$
	p_n(\a+\b)=\sum_{k=0}^n p_k(\a)p_{n-k}(\b). 
	$$	
\item[(v)] The sequence  $(p_n(\a))_{ n\ge 0}$ is $ \left\{ \begin{array}{lll} \text{strictly decreasing, }&\text{ for $0<\a\le 1$, } \\
	\text{increasing, }&\text{ for $2\le \a$. }
	\end{array}\right.$ 	
\item[(vi)] $
	\displaystyle\lim_{n\to\infty} p_n(\a)=\left\{ \begin{array}{ll} 0,\, &\mbox{when }\, 0<\a <1, \\
	e^{-1},\,&\mbox{when }\, \a=1, \\
	\infty,\,&\mbox{when }\, \a>1.
	\end{array}\right.
	$
\item[(vii)] $\displaystyle \lim_{n\to\infty} \frac{p_{n+1}(\a)}{p_n(\a)}=\lim_{n\to\infty} \root{n}\of {p_n(\a)}=1,\, \mbox{when}\ \a>0$.
\end{itemize}
\end{prop}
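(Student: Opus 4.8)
The plan is to base Properties (i), (ii) and (iv) on the single generating-function identity
\begin{equation}\label{eq:gf-plan}
\sum_{n=0}^\infty p_n(\a)w^n=\exp\left(\a\,S(w)\right),\qquad S(w):=\sum_{k=1}^\infty\frac{w^k}{k+1},\quad |w|<1,
\end{equation}
which is nothing but the expansion in Proposition~\ref{thm:power-g} (equivalently the Laurent series for $h_\a$ in Corollary~\ref{thm:laurent-f}) after the substitution $z=-w$. Since $S$ has all coefficients strictly positive and lowest term $w/2$, Property (i) is read off by expanding $\exp(\a S)=\sum_{m\ge0}\a^m S(w)^m/m!$: because $S(w)^m$ begins at order $w^m$, the coefficient of $w^n$ is a polynomial in $\a$ of degree $n$ with no constant term for $n\ge1$, and $c_{n,k}=[w^n]S(w)^k/k!>0$ for $1\le k\le n$; the extreme coefficients $c_{n,1}=[w^n]S(w)=1/(n+1)$ and $c_{n,n}=[w^n]S(w)^n/n!=1/(2^n n!)$ come from the bottom and top terms. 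Property (ii) is then the triangle inequality applied to $p_n(\a)=\sum_k c_{n,k}\a^k$ with $c_{n,k}>0$, and Property (iv) follows from \eqref{eq:gf-plan} by the Cauchy product, comparing coefficients in $\exp(\a S)\exp(\b S)=\exp((\a+\b)S)$.

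For Property (iii) I would instead argue directly from the recursion \eqref{eq:poly-rec}, using only $0<\frac{k+1}{k+2}<1$ and induction on $n$; nonnegativity for $\a\ge0$ is already contained in (i). For $0\le\a\le1$, if $p_j(\a)\le1$ for $j\le n$ then $p_{n+1}(\a)\le\frac{\a}{n+1}\sum_{k=0}^n\frac{k+1}{k+2}\le\frac{\a}{n+1}(n+1)=\a\le1$; the same step with the hypothesis $p_j(\a)\le\a^j$ and $\a^{n-k}\le\a^n$ (valid since $\a\ge1$) gives $p_{n+1}(\a)\le\a^{n+1}$. The bound $p_n(\a)\le n$ for $0\le\a\le2$, $n\ge1$, is the one that needs care: isolating the $k=n$ term ($p_0=1$) and bounding $p_{n-k}\le n-k$ on the rest yields $p_{n+1}(\a)\le\frac{2}{n+1}\bigl(\sum_{k=0}^{n-1}(n-k)+1\bigr)=n+\frac{2}{n+1}\le n+1$ for $n\ge1$, which closes the induction.

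The remaining Properties (v)--(vii), and the ratio bound claimed via Theorem~\ref{thm:alt}, I would defer to Section~\ref{sec:app}, since they require more than the recursion. For (v) and the first two cases of (vi) in the range $0<\a\le1$ the plan is to invoke the Stieltjes representation $p_{n+1}(\a)=\int_0^1 x^n\,d\sigma_\a(x)$ from Theorem~\ref{thm:St-cm}: the factor $x\le1$ on $\supp\,\sigma_\a\subseteq[0,1]$ forces $(p_n(\a))$ to decrease, while dominated convergence gives the limits $0$ for $\a<1$ and $e^{-1}$ for $\a=1$ (the latter reflecting an atom of $\sigma_1$ at $x=1$). The increase for $\a\ge2$ and the divergence for $\a>1$ I would read from the estimate \eqref{eq:ratbdd} together with a matching lower bound on $p_{n+1}/p_n$. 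Property (vii) then follows from \eqref{eq:gf-plan}: analyticity of $\exp(\a S)$ in $\D$ gives $\limsup p_n(\a)^{1/n}\le1$, while $p_n(\a)\ge c_{n,1}\a=\a/(n+1)$ forces $\liminf p_n(\a)^{1/n}\ge1$, so $\lim p_n(\a)^{1/n}=1$, and the companion statement for $p_{n+1}/p_n$ comes from the same moment/ratio analysis. I expect the real obstacle to lie in (v)--(vi) rather than in (i)--(iv): proving the strict monotonicity and identifying the limit $e^{-1}$ at $\a=1$ depends on the fine behaviour of $\sigma_\a$ near $x=1$, which is genuinely a fact of Section~\ref{sec:St1}, whereas everything for (i)--(iv) is routine once \eqref{eq:gf-plan} is available.
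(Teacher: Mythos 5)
Your treatment of (i)--(iv) is correct and, despite the generating-function packaging, is essentially the paper's own proof: the identity $\sum_{n\ge0}p_n(\a)w^n=\exp(\a S(w))$ is exactly what Proposition~\ref{thm:power-g} encodes via Bell polynomials, and the paper likewise obtains (ii) from positivity of the coefficients $c_{n,k}$, (iii) by the same induction (including the same isolation of the $p_0$-term for the bound $p_n(\a)\le n$), and (iv) by multiplying series in the relation $h_\a h_\b=h_{\a+\b}$. The $0<\a\le1$ half of (v)--(vi), via the Hausdorff representations \eqref{eq:Haus1}--\eqref{eq:Haus2}, also matches the paper (Section~\ref{sec:app}). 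One place where you genuinely improve on the paper is the root limit in (vii): the paper deduces $\lim_n \root{n}\of{p_n(\a)}=1$ from the ratio limit via \cite[Theorem 3.37]{R}, whereas your combination of radius of convergence $\ge 1$ (giving $\limsup p_n(\a)^{1/n}\le1$) with the lower bound $p_n(\a)\ge c_{n,1}\a=\a/(n+1)$ proves it directly, correctly, and independently of the ratio limit.

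However, everything you deferred for $\a>1$ is precisely where the real work of this proposition lies, and the tools you name would not suffice. (a) For the increase at $\a\ge2$ you need a \emph{lower} bound on $p_{n+1}/p_n$; the paper derives $p_{n+1}(\a)\ge\frac{n+\a/2}{n+1}\,p_n(\a)$ from the recursion (Equation \eqref{eq:pnp_pn}) --- easy, but it must be done, and \eqref{eq:ratbdd}, being an upper bound, gives nothing here. (b) More seriously, the divergence $p_n(\a)\to\infty$ for $1<\a\le2$ cannot be ``read off'' from any ratio bound of this type: the best such lower bound, $\frac{n+\a/2}{n+1}$, is less than $1$, and its partial products behave like $n^{\a/2-1}$, which stays bounded exactly when $\a\le 2$. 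The paper needs a genuinely different mechanism (Proposition~\ref{thm:toinfty}): introducing the means $M_n(\a)$, proving by contradiction that $(p_n(\a))$ is unbounded, and then that $(M_n(\a))$ is eventually strictly increasing and divergent. No trace of this idea appears in your plan. (c) The ratio limit $p_{n+1}(\a)/p_n(\a)\to1$ in (vii) is likewise unproved for $\a>1$: ``the same moment/ratio analysis'' is unavailable there, since by Theorem~\ref{thm:St-cm} the sequence $(p_{n+1}(\a))$ is a Stieltjes moment sequence only for $0\le\a\le1$; and your root limit does not imply the ratio limit (the implication goes the other way). The paper again relies on the $M_n$ machinery, with the auxiliary bounds \eqref{eq:Mto1} and \eqref{eq:pleqM} and the parameter $k_0$ sent to infinity. (d) The same applies to the bound \eqref{eq:ratbdd} for $1<\a<2$ --- part of the claim that the $p_n$ ``satisfy Theorem~\ref{thm:alt}'' --- which the paper obtains from \eqref{eq:Mto1} and \eqref{eq:mean1}, not from the recursion alone.
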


\begin{proof}[Proof of Proposition \ref{thm:p-bounds}: (i)  through (iv)]
	Property $(i)$ follows easily by induction using the recursion \eqref{eq:poly-rec}, while Property $(ii)$ follows  because the coefficients of $p_n$ are non-negative. 
	
	Property $(iii)$ follows by induction, as described below. The two first inequalities hold for $n=0$, and assuming the assertion for $k\le n$ we get by 
	\eqref{eq:poly-rec}, for $0\le\a\le 1$
	$$
	p_{n+1}(\a)\le\frac{1}{n+1}\sum_{k=0}^n \frac{k+1}{k+2}\le 1,
	$$
	and for $1\le \a$
	$$
	p_{n+1}(\a)\le\frac{\a}{n+1}\sum_{k=0}^n \frac{k+1}{k+2}\a^{n-k} \le \a^{n+1}.
	$$
	The last inequality holds for $n=1$, and by induction using $p_0(\a)=1$ and  $p_k(\a)\le k$ for $1\le k\le n$ we find, for $n\ge 1$,
	$$
	p_{n+1}(\a)\le \frac{\a}{n+1}\left(1+\sum_{k=0}^{n-1} (n-k)\right)=\frac{\a}{n+1}\left[1+\frac{n(n+1)}2\right]\le \frac{2}{n+1} +n\le n+1.
	$$

	To see $(iv)$ we notice that by \eqref{eq:f1z} we get
	$$
	h_{\a+\b}(z)=h_\a(z)h_\b(z),\quad z\in\mathcal A,
	$$
	and by \eqref{eq:laurent-f}
	$$
	h_\a(-1/s)=e^\a\sum_{n=0}^\infty p_n(\a)s^n,\quad 0<|s|<1.
	$$
	This implies that
	\begin{equation}\label{eq:powerprod}
	\sum_{n=0}^\infty p_n(\a+\b) s^n=\sum_{n=0}^\infty p_n(\a)s^n \sum_{m=0}^\infty p_m(\b)s^m,\quad 0<|s|<1,
	\end{equation}
	which clearly holds for $s=0$. Multiplying the absolutely convergent power series for $|s|<1$ in \eqref{eq:powerprod}, we get the addition formula.
\end{proof}

By Theorem~\ref{thm:intro2} and Proposition~\ref{thm:p-bounds}-$(ii,\,iii)$ we obtain the following.
\begin{cor}\label{thm:remarkvarphi} The entire functions $\varphi_\a$ satisfy for $s,\a\in\C$
$$
|\varphi_\a(s)|\le \left\{ \begin{array}{ll} |e^\a|e^{|s|}\,,\, &\mbox{when }\,  |\a| \le 1 \\
|\a||e^\a|e^{|\a s|}\,,\,&\mbox{when }\, 1 \le |\a|.
\end{array}\right.
$$
\end{cor}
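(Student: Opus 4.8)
The plan is to read off the estimate directly from the power series representation \eqref{eq:ent1}, bounding the series term by term. Starting from
$$
\varphi_\a(s)=e^\a\sum_{n=0}^\infty(-1)^np_{n+1}(\a)\frac{s^n}{n!},
$$
I would apply the triangle inequality to get
$$
|\varphi_\a(s)|\le |e^\a|\sum_{n=0}^\infty |p_{n+1}(\a)|\frac{|s|^n}{n!}.
$$
The key move is to replace each coefficient $|p_{n+1}(\a)|$ by $p_{n+1}(|\a|)$ using Proposition \ref{thm:p-bounds}-$(ii)$, which reduces the whole question to estimating the polynomials at the real non-negative argument $|\a|$.

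Next I would split into the two regimes governed by the statement. When $|\a|\le 1$, Proposition \ref{thm:p-bounds}-$(iii)$ gives $p_{n+1}(|\a|)\le 1$ for every $n$, so the right-hand side is dominated by $|e^\a|\sum_{n\ge 0}|s|^n/n!=|e^\a|e^{|s|}$. When $|\a|\ge 1$, the same proposition gives $p_{n+1}(|\a|)\le |\a|^{n+1}$, whence the sum is bounded by $|\a||e^\a|\sum_{n\ge 0}(|\a||s|)^n/n!=|\a||e^\a|e^{|\a s|}$, where I use $|\a s|=|\a||s|$. This produces exactly the two lines of the claimed inequality.

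There is essentially no obstacle here: the estimate is a direct consequence of the absolute convergence of the series \eqref{eq:ent1} — itself guaranteed by the polynomial growth bounds — together with the two already-established facts about the $p_n$. The only point requiring a moment's care is to invoke Proposition \ref{thm:p-bounds}-$(ii)$ \emph{before} $(iii)$, so that the bounds of $(iii)$, which are stated for real non-negative arguments, are applied legitimately to $|\a|$ rather than to the complex number $\a$ itself.
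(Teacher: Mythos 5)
Your argument is correct and is exactly the paper's intended proof: the corollary is stated there as an immediate consequence of the power series \eqref{eq:ent1} together with Proposition \ref{thm:p-bounds}-$(ii,\,iii)$, applied in precisely the order you describe (first passing from $|p_{n+1}(\a)|$ to $p_{n+1}(|\a|)$, then using the real-argument bounds $p_{n+1}(|\a|)\le 1$ or $p_{n+1}(|\a|)\le |\a|^{n+1}$ in the two regimes). The same estimate also appears in the paper as \eqref{eq:maj} in the proof of Theorem \ref{thm:main1}, so nothing further is needed.
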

The order and type of $\varphi_\a$ when $\a>0$ are given in Theorem~\ref{thm:final}-$(v)$.

We can finally conclude the proof of Theorem \ref{thm:intro1}, as a consequence of the following.   
\begin{thm}\label{thm:main1} For any $\a\in\C$,  $\varphi_\a(s)$ is bounded for $s\in [0,\infty[$ and tends to 0 for $s\to\infty$.
The following formula holds
\begin{equation}\label{eq:plan2}
f_\a(z)=\int_0^\infty \varphi_\a(s) e^{-sz}\,ds,\quad \Rea{z}> 0.
\end{equation}
\end{thm}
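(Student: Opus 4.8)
The plan is to establish all three claims directly from the contour representation \eqref{eq:phi1} together with the asymptotic \eqref{eq:f-infty}, without recourse to the Plancherel machinery of the previous section. Fix $c>1$, $r>0$. Since every $w$ on the rectangle $C(r,c)$ satisfies $\Rea w\le 0$, for $s\ge 0$ we have $|e^{sw}|=e^{s\Rea w}\le 1$, and \eqref{eq:phi1} gives at once
\[
|\varphi_\a(s)|\le \frac{1}{2\pi}\int_{C(r,c)}|f_\a(w)|\,|dw|=:M<\infty ,
\]
the integral being finite because $f_\a$ is continuous on $C(r,c)$, which avoids the cut $[-1,0]$. This is the asserted boundedness on $[0,\infty[$.

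For the decay I would split $C(r,c)$ into its right edge $E=\{iy:-r\le y\le r\}$ and the remaining three edges $E'$. On $E$ the contribution is $\frac{1}{2\pi}\int_{-r}^{r}f_\a(iy)e^{isy}\,dy$, which tends to $0$ as $s\to\infty$ by the Riemann--Lebesgue lemma, since $y\mapsto f_\a(iy)$ is continuous on $[-r,r]$. On $E'$ one has $\Rea w\le 0$ with equality only at the two corners $\pm ir$; thus $|f_\a(w)e^{sw}|\le|f_\a(w)|$ is integrable while $e^{s\Rea w}\to 0$ for almost every $w\in E'$, so the contribution of $E'$ tends to $0$ by dominated convergence. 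Adding the pieces gives $\varphi_\a(s)\to 0$.

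For the integral formula, fix $z$ with $\Rea z>0$; the bound $M$ makes $\int_0^\infty\varphi_\a(s)e^{-sz}\,ds$ absolutely convergent. Inserting \eqref{eq:phi1} and applying Fubini's theorem---legitimate because $\int_0^\infty\!\int_{C(r,c)}|f_\a(w)|\,e^{s(\Rea w-\Rea z)}\,|dw|\,ds\le (\Rea z)^{-1}\int_{C(r,c)}|f_\a(w)|\,|dw|<\infty$ using $\Rea w\le 0$---reduces the inner integral to $\int_0^\infty e^{s(w-z)}\,ds=(z-w)^{-1}$ and leaves
\[
\int_0^\infty\varphi_\a(s)e^{-sz}\,ds=\frac{1}{2\pi i}\int_{C(r,c)}\frac{f_\a(w)}{z-w}\,dw .
\]
It remains to identify the right-hand side with $f_\a(z)$. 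Since $\Rea z>0$, the point $z$ lies outside $C(r,c)$; enlarging the contour to a circle $\Gamma_R=\{|w|=R\}$ enclosing both $[-1,0]$ and $z$, the only singularity of $f_\a(w)/(z-w)$ lying between the two contours is the simple pole at $w=z$, with residue $-f_\a(z)$, so that $\frac{1}{2\pi i}\int_{\Gamma_R}-\frac{1}{2\pi i}\int_{C(r,c)}=-f_\a(z)$. By \eqref{eq:f-infty} we have $|f_\a(w)|=O(1/R)$ on $\Gamma_R$, so $\frac{1}{2\pi i}\int_{\Gamma_R}f_\a(w)/(z-w)\,dw=O\!\big((R-|z|)^{-1}\big)\to 0$; being independent of $R$, it is identically $0$, and comparing with the residue yields $\frac{1}{2\pi i}\int_{C(r,c)}f_\a(w)/(z-w)\,dw=f_\a(z)$, i.e.\ \eqref{eq:plan2}. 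As a check, $z\to\infty$ recovers the leading term $(\a/2)e^\a z^{-1}$ of \eqref{eq:f-infty} from $\varphi_\a(0)=e^\a\a/2$ in \eqref{eq:phia(0)}.

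The only genuinely delicate point I anticipate is the decay as $s\to\infty$: on the right edge of $C(r,c)$ the factor $e^{sw}$ has modulus $1$ and supplies no decay, which is exactly why that edge must be handled separately by Riemann--Lebesgue rather than by dominated convergence. The remaining steps are a routine Fubini-and-residue computation, the only care needed being the vanishing of the auxiliary large-circle integral, which is immediate from \eqref{eq:f-infty}.
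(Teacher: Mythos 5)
Your proof is correct, and while your boundedness/decay argument runs parallel to the paper's, your derivation of \eqref{eq:plan2} takes a genuinely different route. For the decay, the paper also isolates the imaginary-axis edge and applies Riemann--Lebesgue there, but controls the other three edges by explicit exponential bounds rather than dominated convergence, and it gets boundedness from continuity plus decay instead of your direct estimate $|e^{sw}|\le 1$; these differences are cosmetic. The real divergence is in the Laplace identity: the paper substitutes the power series \eqref{eq:ent1} (Theorem~\ref{thm:intro2}), integrates the partial sums term by term against $e^{-sx}$ using the bounds of Proposition~\ref{thm:p-bounds}-$(ii,\,iii)$ as an integrable majorant, recognizes the resulting series as the Laurent expansion \eqref{eq:laurent-f} of $f_\alpha(x)$ for real $x>\max(1,|\alpha|)$, and then invokes holomorphy of both sides to extend to $\Rea z>0$. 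You instead apply Fubini to collapse the Laplace transform into the Cauchy-type integral $\frac{1}{2\pi i}\int_{C(r,c)}f_\alpha(w)(z-w)^{-1}\,dw$ and evaluate it by contour enlargement and the residue at $w=z$, using \eqref{eq:f-infty} to make the large circle negligible. Your route is more self-contained: it needs neither Theorem~\ref{thm:intro2} nor the polynomial bounds, and it yields \eqref{eq:plan2} on the whole half-plane at once with no analytic-continuation step; the paper's route instead reuses machinery it has already built (and uses again later), tying the Laplace transform directly to the Laurent coefficients $p_n(\alpha)$. One small point to tighten: contrary to your parenthetical remark, the rectangle $C(r,c)$ does not avoid the cut $[-1,0]$; its right edge passes through the endpoint $0$, where $f_\alpha$ is continuous (by \eqref{eq:fa(0)}) but not holomorphic. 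Hence your residue/deformation step --- like the paper's own applications of Cauchy's theorem in Theorem~\ref{thm:cut} and Lemma~\ref{thm:s<0} --- requires the version of Cauchy's theorem for functions holomorphic in a region and continuous up to its boundary, or a small indentation of the contour around $0$; this is routine and leaves you at exactly the paper's level of rigor.
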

\begin{proof}
Let us write \eqref{eq:phi1} in another way. Introducing
$$
I_1(r,s):=\frac{1}{2\pi}\int_{-r}^r f_\a(iy)e^{isy}\,dy,\quad 
I_3(r,s):=\frac{1}{2\pi}\int_{-r}^r f_\a(-c+iy)e^{s(-c+iy)}\,dy,
$$
and
$$
I_2(r,s):=\frac{1}{2\pi i}\int_{-c}^0 f_\a(x+ir)e^{s(x+ir)}\,dx,\quad
I_4(r,s):=\frac{1}{2\pi i}\int_{-c}^0 f_\a(x-ir)e^{s(x-ir)}\,dx,
$$
we have for $s\ge 0$
\begin{equation}\label{eq:phi21}
\varphi_\a(s)=I_1(r,s)-I_3(r,s) -I_2(r,s) +I_4(r,s).
\end{equation}
We see by  Riemann-Lebesgue's lemma that $\lim_{s\to\infty}I_1(r,s)=0$. Furthermore, 
$$
|I_3(r,s)|\le \frac{r}{\pi}e^{-sc}\max\{|f_\a(-c+iy)|:|y|\le r\}
$$
and
$$
|I_2(r,s)|, |I_4(r,s)|\le \frac{1}{2\pi}\max\{|f_\a(x \pm ir)|: -c\le x\le 0\}\int_{-c}^0 e^{sx}\,dx
$$ 
show that
$$
\lim_{s\to\infty} |I_j(r,s)|=0,\quad j=2,3,4.
$$  
By \eqref{eq:phi21} it follows that $\lim_{s\to\infty}\varphi_\a(s)=0$. This property together with the 
continuity of $\varphi_\a$ imply that $\varphi_\a$ is bounded on $[0,\infty[$.

To prove the formula \eqref{eq:plan2} we note that the right-hand side is  holomorphic for $\Rea z>0$, and so is the left-hand side. 

For $s\ge 0$ we have by Proposition~\ref{thm:p-bounds}-$(ii,\,iii)$
\begin{equation}\label{eq:maj}
\left|\sum_{n=0}^N(-1)^np_{n+1}(\a)\frac{s^n}{n!}\right|
\le
\sum_{n=0}^N p_{n+1}(|\a|)\frac{s^n}{n!}
\le \left\{\begin{array}{ll} e^s,\;&\mbox{if}\; |\a|\le 1\\
|\a|e^{|\a|s},\;&\mbox{if}\; |\a|\ge 1.
\end{array}\right.
\end{equation}
For $x>0$ we have
\begin{eqnarray*}
&&\int_0^\infty e^{\a} \left(\sum_{n=0}^N (-1)^np_{n+1}(\a)\frac{s^n}{n!}\right)e^{-sx}\,ds\\
&=&e^\a\sum_{n=0}^N (-1)^n p_{n+1}(\a)\int_0^\infty \frac{s^n}{n!}e^{-sx}\,ds=
e^{\a}\sum_{n=0}^N (-1)^n\frac{p_{n+1}(\a)}{x^{n+1}}.
\end{eqnarray*}
Assume now  $x>\max(1,|\a|)$. For $N\to\infty$ the last expression converges to $f_\a(x)$ by Corollary~\ref{thm:laurent-f}.
The integrand  in the first expression converges for each $s\ge 0$ to $\varphi_\a(s)e^{-sx}$
with an integrable majorant because of \eqref{eq:maj}, so by Lebesgue's Theorem on dominated convergence, we  get
$$
\int_0^\infty \varphi_\a(s)e^{-sx}\,ds=f_\a(x),\quad x>\max(1,|\a|).
$$
This is enough to conclude \eqref{eq:plan2}.
\end{proof}

As discussed in the Introduction, we can now state the following important result. 

\begin{thm}\label{thm:main2} For $\a\in\C$, $f_\a$ is completely monotonic if and only if $\varphi_\a(s)\ge 0$ for $s\ge 0$.
In the affirmative case $\varphi_\a$ is integrable on $[0,\infty[$ and
\begin{equation}\label{eq:intphi}
\lim_{x\to 0^+}f_\a(x)=e^\a-1=\int_0^\infty \varphi_\a(s)\,ds.
\end{equation}
Moreover, in this case \eqref{eq:plan2} holds for $\Rea{z}\ge 0$ and
\begin{equation}\label{eq:Bnstein}
h_\a(z)=1+\int_0^\infty (1-e^{-sz})\varphi_\a(s)\,ds,\quad \Rea{z}\ge 0,z\neq 0.
\end{equation}
\end{thm}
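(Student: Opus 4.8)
The plan is to read the equivalence off Bernstein's theorem (\cite[p.\ 160]{W}) through the Laplace representation \eqref{eq:plan2} of Theorem~\ref{thm:main1}, and then to extract the remaining assertions from the boundary value \eqref{eq:fa(0)} by a monotone-convergence argument. The point is that \eqref{eq:plan2} already exhibits $f_\a$ as the Laplace transform of the (a priori signed) density $\varphi_\a$, so positivity of $f_\a$ in the sense of complete monotonicity should be interchangeable with positivity of $\varphi_\a$.

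For the equivalence I would treat the two directions separately. In the direction $(\Leftarrow)$, assume $\varphi_\a(s)\ge 0$ for $s\ge 0$. Since $\varphi_\a$ is bounded on $[0,\infty[$ by Theorem~\ref{thm:main1}, say $|\varphi_\a|\le M$, the functions $s\mapsto s^n\varphi_\a(s)e^{-sz}$ have the integrable majorant $Ms^ne^{-sx_0}$ on each half-line $\Rea z\ge x_0>0$; hence one may differentiate \eqref{eq:plan2} under the integral sign to obtain $(-1)^nf_\a^{(n)}(x)=\int_0^\infty s^n\varphi_\a(s)e^{-sx}\,ds\ge 0$ for $x>0$, so $f_\a$ is completely monotonic. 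In the direction $(\Rightarrow)$, assume $f_\a$ is completely monotonic; Bernstein's theorem provides a positive measure $\mu$ with $f_\a(x)=\int_0^\infty e^{-sx}\,d\mu(s)$, while \eqref{eq:plan2} gives $f_\a(x)=\int_0^\infty e^{-sx}\varphi_\a(s)\,ds$. Injectivity of the Laplace transform (the uniqueness clause in Bernstein's theorem) then forces $d\mu(s)=\varphi_\a(s)\,ds$, and since $\mu\ge 0$ while $\varphi_\a$ is continuous, we conclude $\varphi_\a(s)\ge 0$ for every $s\ge 0$.

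Granting the equivalence, I would next prove integrability and \eqref{eq:intphi}. With $\varphi_\a\ge 0$, the integrands $\varphi_\a(s)e^{-sx}$ increase monotonically to $\varphi_\a(s)$ as $x\downarrow 0$, so monotone convergence combined with \eqref{eq:plan2} and the limit $\lim_{x\to 0^+}f_\a(x)=e^\a-1$ from \eqref{eq:fa(0)} yields $\int_0^\infty\varphi_\a(s)\,ds=e^\a-1<\infty$; this is simultaneously the integrability of $\varphi_\a$ and the identity \eqref{eq:intphi}. Once $\varphi_\a\in L^1([0,\infty[)$, the bound $|\varphi_\a(s)e^{-sz}|\le\varphi_\a(s)$ for $\Rea z\ge 0$ is a uniform integrable majorant, so the right-hand side of \eqref{eq:plan2} is continuous on the closed half-plane $\Rea z\ge 0$ and holomorphic in its interior; as it agrees with $f_\a$ for $\Rea z>0$ and both sides take the value $e^\a-1$ at $z=0$, the representation \eqref{eq:plan2} extends to $\Rea z\ge 0$. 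Finally \eqref{eq:Bnstein} follows by substituting $h_\a(z)=e^\a-f_\a(z)$ and $e^\a-1=\int_0^\infty\varphi_\a(s)\,ds$ into the extended \eqref{eq:plan2} and recombining; the split into two integrals is legitimate because $|(1-e^{-sz})\varphi_\a(s)|\le 2\varphi_\a(s)$ is integrable for $\Rea z\ge 0$.

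The step I expect to be the main obstacle is the converse direction $(\Rightarrow)$, where one must identify the a priori unknown positive measure $\mu$ furnished by Bernstein's theorem with the density $\varphi_\a(s)\,ds$. The delicacy is that complete monotonicity only guarantees existence of some representing measure, so matching it with the possibly signed function $\varphi_\a$ requires the injectivity of the Laplace transform on measures rather than merely on nonnegative densities; the continuity of the entire function $\varphi_\a$ is then exactly what promotes almost-everywhere nonnegativity to nonnegativity at every point of $[0,\infty[$.
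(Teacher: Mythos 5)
Your proposal is correct and follows essentially the same route as the paper: the equivalence is read off Bernstein's theorem applied to the representation \eqref{eq:plan2} (the paper states this in one sentence; you usefully fill in the differentiation-under-the-integral direction and the Laplace-uniqueness-plus-continuity direction), \eqref{eq:intphi} comes from monotone convergence together with \eqref{eq:fa(0)}, the extension of \eqref{eq:plan2} to $\Rea z\ge 0$ comes from the $L^1$ majorant and continuity of both sides, and \eqref{eq:Bnstein} is the same algebraic recombination. No gaps; the added detail on identifying the Bernstein measure with $\varphi_\a(s)\,ds$ is exactly the point the paper leaves implicit.
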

\begin{proof} The first assertion follows from Bernstein's characterization of completely monotonic functions as Laplace transforms of positive measures. Equation \eqref{eq:intphi} follows from \eqref{eq:fa(0)}, \eqref{eq:plan2} and the monotonicity theorem of Lebesgue. 

When $\varphi_\a$ is integrable over $[0,\infty[$, the right-hand side of \eqref{eq:plan2} is continuous in the half-plane $\Rea{z}\ge 0$, and since $f_\a$ is also continuous there, we see
that \eqref{eq:plan2} holds for $\Rea{z}\ge 0$. The Equation \eqref{eq:Bnstein} follows easily from \eqref{eq:intphi}.
\end{proof}

\section{The cases $0<\a\le 1$	}\label{sec:St1}
As mentioned in Section \ref{sec:intro},  it was proved in \cite{A:B} that $f_\a$ is completely monotonic for $0<\a\le 1$ and equivalently $\varphi_\a$ is non-negative on $[0,\infty[$ for these values of $\a$.  We shall use the previous results to give a new proof of this. We recall that a function $f:\left]0,\infty\right[\to\R$ is called a Stieltjes function, if it has the form
\begin{equation}\label{eq:St}
f(s)=a+\int_0^\infty \frac{d\mu(t)}{s+t}, \quad s>0,
\end{equation}
where $a\ge 0$ and $\mu$ is a positive measure on $[0,\infty[$. A Stieltjes function is completely monotonic but the converse is not true.  For more information about Stieltjes functions see \cite{B:F} and \cite{B2008}.

We have the following result.
\begin{thm}\label{thm:Stigen}The function $f_\a$ is a Stieltjes function for $0\le\a\le 1$, but not for $\a>1$.
\end{thm}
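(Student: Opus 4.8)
The plan is to invoke the complex-analytic characterization of Stieltjes functions: a nonnegative function $g$ on $]0,\infty[$ is a Stieltjes function if and only if it extends holomorphically to the cut plane $\mathcal A$ and maps the upper half-plane into the closed lower half-plane, i.e. $\Ima g(z)\le 0$ whenever $\Ima z>0$ (see \cite{B:F,B2008}). Since $f_\a$ already extends holomorphically to $\C\setminus[-1,0]\supseteq\mathcal A$ and $e^\a$ is real, the task splits into two checks: that $f_\a\ge 0$ on $]0,\infty[$, and that $\Ima f_\a(z)=-\Ima h_\a(z)\le 0$ for $\Ima z>0$, i.e. $\Ima h_\a(z)\ge 0$ on the upper half-plane.

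Put $L(z):=z\Log(1+1/z)$, so $h_\a=\exp(\a L)$ and, for real $\a$, $\Ima h_\a(z)=e^{\Rea(\a L(z))}\sin\!\big(\a\,\Ima L(z)\big)$; thus the sign of $\Ima h_\a$ is controlled entirely by $\Ima L$. The nonnegativity on $]0,\infty[$ is the easy part: from $(1+1/x)^x<e$ we get $h_\a(x)=\big((1+1/x)^x\big)^\a<e^\a$ for $0<\a\le1$, so $f_\a(x)>0$ there (and $f_0\equiv 0$ is trivially Stieltjes). The core of the proof is a sharp bound on $\Ima L$.

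The key estimate I would establish is that, on the upper half-plane,
\begin{equation*}
0<\Ima L(z)<\pi,\qquad\text{with}\quad \sup_{\Ima z>0}\Ima L(z)=\pi .
\end{equation*}
To prove it I would analytically continue the identity $x\log(1+1/x)=\int_0^1\frac{x}{x+u}\,du$ to obtain $L(z)=1-\int_0^1\frac{u}{z+u}\,du$ on $\C\setminus[-1,0]$, so that for $z=x+iy$ with $y>0$
\begin{equation*}
\Ima L(z)=\int_0^1\frac{uy}{(x+u)^2+y^2}\,du .
\end{equation*}
This is $>0$, and since $u\le 1$ on the range of integration it is bounded above by $\int_0^1\frac{y}{(x+u)^2+y^2}\,du=\Arctan\frac{x+1}{y}-\Arctan\frac{x}{y}<\pi$. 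For sharpness, fix $x\in\,]-1,0[$ and let $y\to0^+$: the Cauchy kernel $\frac{y}{(x+u)^2+y^2}$ concentrates with total mass $\pi$ at the interior point $u=-x$, giving $\Ima L(x+iy)\to -x\pi$, and $\sup_{x\in\,]-1,0[}(-x\pi)=\pi$ (approached as $z\to-1$ from above). Pinning down the exact constant $\pi$ and the strict inequality is the main obstacle; the rest is routine.

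Granting this, the case $0\le\a\le1$ is immediate: then $\a\,\Ima L(z)\in\,]0,\pi[$, whence $\sin(\a\,\Ima L(z))\ge0$, so $\Ima h_\a\ge0$ and $\Ima f_\a\le0$ on the upper half-plane; together with the nonnegativity this makes $f_\a$ Stieltjes. For $\a>1$ I would break the sign condition: $\Ima L$ is continuous on the connected upper half-plane and takes values arbitrarily close to $0$ (along $z=R+i$, $R\to\infty$) and to $\pi$, so its image is exactly $]0,\pi[$. Hence one may pick $z_0$ with $\Ima L(z_0)$ in the nonempty interval $]\pi/\a,\min(2\pi/\a,\pi)[$, so that $\a\,\Ima L(z_0)\in\,]\pi,2\pi[$ and $\sin(\a\,\Ima L(z_0))<0$; then $\Ima f_\a(z_0)>0$, so $f_\a$ is not Stieltjes. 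The threshold $\a=1$ thus falls out precisely from $\sup\Ima L=\pi$. (Alternatively, for $\a>1$ one can argue that a Stieltjes $f_\a$ would, via $\frac1{s+t}=\int_0^\infty e^{-sx}e^{-tx}\,dx$ and uniqueness of the Laplace transform, force $\varphi_\a$ to be completely monotonic, contradicting Theorem~\ref{thm:St-cm}.)
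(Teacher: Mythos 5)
Your proof is correct, but it reaches the theorem by a genuinely different route than the paper, at least for the positive half. The paper splits the statement into three pieces: for $0<\a<1$ it deforms the contour defining $\varphi_\a$ onto the cut $[-1,0]$ (Theorem~\ref{thm:Sta1}) and thereby produces the representing measure explicitly, namely \eqref{eq:St2} with density $\frac1\pi\left(x/(1-x)\right)^{\a x}\sin(\a\pi x)$ on $[0,1]$; the case $\a=1$ then needs a separate weak-limit argument (Proposition~\ref{thm:weak}, Corollary~\ref{thm:repa=1}) producing an extra atom, i.e.\ the term $1/(z+1)$; and for $\a>1$ it uses only the easy necessary condition $\Ima f(z)<0$ for $\Ima z>0$, violated in the boundary limit $\lim_{r\to0^+}\Ima f_\a(-x+ir)=-\left(x/(1-x)\right)^{\a x}\sin(\a\pi x)>0$ when $1<\a x<2$ (Proposition~\ref{thm:nonSti}). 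You instead verify the full Nevanlinna--Pick-type characterization of Stieltjes functions directly, via the identity $L(z)=1-\int_0^1\frac{u}{z+u}\,du$ and the resulting bound $0<\Ima L(z)<\pi$ with supremum exactly $\pi$; this handles $0<\a\le 1$ in one stroke (no separate treatment of $\a=1$) and makes the threshold $\a=1$ conceptually transparent, while your $\a>1$ argument is in essence the paper's: your sharpness computation $\Ima L(x+iy)\to-x\pi$ is the same boundary limit, repackaged through the intermediate value theorem so as to stay inside the open half-plane. What you lose is what the paper's construction buys: the explicit representing density is needed later (nonnegativity of $\varphi_\a$ for $0<\a\le1$, Theorem~\ref{thm:Haus}, Theorem~\ref{thm:St-cm}), so your argument could not replace Theorem~\ref{thm:Sta1} in the paper's architecture; moreover you lean on the converse direction of the characterization ($g\ge0$ on $]0,\infty[$ plus $\Ima g\le0$ on the upper half-plane implies Stieltjes), a nontrivial Herglotz-type representation theorem that the paper never needs --- it is indeed in \cite{B2008}, so the citation is legitimate, but it does the heavy lifting. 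One caution: your parenthetical alternative for $\a>1$ (a Stieltjes $f_\a$ would make $\varphi_\a$ completely monotonic, contradicting Theorem~\ref{thm:St-cm}) is circular, because the paper proves the implication $(ii)\Rightarrow(iii)$ of Theorem~\ref{thm:St-cm} by invoking precisely the present theorem; your main argument, however, does not suffer from this.
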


The cases $0<\a<1$, $\a=1$ and $\a>1$ are treated separately in Theorem~\ref{thm:Sta1},
Corollary~\ref{thm:repa=1} and Proposition~\ref{thm:nonSti}.

\begin{thm}\label{thm:Sta1} For $0<\a<1$ we have
\begin{equation}\label{eq:St1}
\varphi_\a(s)=\frac{1}{\pi}\int_0^1 (x/(1-x))^{\a x}\sin(\a\pi x) e^{-sx}\,dx,\quad s\ge 0,
\end{equation}
and
\begin{equation}\label{eq:St2}
f_\a(z)=\frac{1}{\pi}\int_0^1 \frac{(x/(1-x))^{\a x}\sin(\a\pi x)}{x+z} \,dx,\quad z\in\C\setminus [-1,0].
\end{equation}
\end{thm}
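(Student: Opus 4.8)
The plan is to evaluate the defining contour integral $\eqref{eq:phi1}$ by collapsing the rectangle $C(r,c)$ onto the branch cut $[-1,0]$ of $f_\a$. Since Theorem~\ref{thm:cut} tells us that $\varphi_\a$ is independent of $r>0$, I would let $r\to 0^+$ in $\eqref{eq:phi1}$. The left edge (at $\Rea z=-c<-1$) and the right edge (at $\Rea z=0$) each have length $2r$ and carry a bounded integrand — near $z=0$ because $f_\a(z)\to e^\a-1$ by $\eqref{eq:fa(0)}$, near $z=-c$ because $-c$ lies off the cut — so both contribute $\mathcal O(r)\to 0$. The top and bottom edges, at heights $\pm r$, converge to integrals of the upper and lower boundary values of $f_\a$ over $[-c,0]$; on $[-c,-1]$ the function $f_\a$ is holomorphic across the real axis so these cancel, and only the jump across $[-1,0]$ survives. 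Keeping track of the orientation of the positively oriented rectangle, this yields
\[
2\pi i\,\varphi_\a(s)=\int_{-1}^0\bigl[f_\a(x-i0)-f_\a(x+i0)\bigr]e^{sx}\,dx,
\]
so that, after the substitution $x=-t$, everything reduces to computing the jump of $f_\a$ (equivalently of $h_\a$) across the cut.

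The crux is this boundary-value computation, where care with the principal branch of $\Log$ is essential. For $z=-t\pm i\eps$ with $0<t<1$ one writes $1+1/z=(t^2+\eps^2-t\mp i\eps)/(t^2+\eps^2)$; as $\eps\to 0^+$ the numerator tends to the negative number $t(t-1)$, approached from the third quadrant on the upper side and from the second quadrant on the lower side, so $|1+1/z|\to(1-t)/t$ while $\Log(1+1/z)\to\log((1-t)/t)\mp i\pi$. Multiplying by $\a z\to-\a t$ gives $\a z\Log(1+1/z)\to \a t\log(t/(1-t))\pm i\a\pi t$, whence by $\eqref{eq:f1z}$
\[
h_\a(-t\pm i0)=(t/(1-t))^{\a t}\,e^{\pm i\a\pi t}.
\]
Subtracting, $f_\a(-t-i0)-f_\a(-t+i0)=h_\a(-t+i0)-h_\a(-t-i0)=2i\,(t/(1-t))^{\a t}\sin(\a\pi t)$, and inserting this into the previous display produces $\eqref{eq:St1}$. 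To make the limit $r\to0$ rigorous I would note that near $z=-1$ one has $1+1/z\sim-(z+1)$, so $|f_\a(z)|=\mathcal O(|z+1|^{-\a})$ with $0<\a<1$; since $|x+ir+1|\ge|x+1|$, this supplies the integrable majorant $C(|x+1|^{-\a}+1)$ needed for dominated convergence on the edges, and simultaneously establishes the integrability of the right-hand side of $\eqref{eq:St1}$ near $t=1$.

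Finally, for $\eqref{eq:St2}$ I would insert $\eqref{eq:St1}$ into the Laplace representation $\eqref{eq:plan2}$, valid for $\Rea z>0$, and interchange the order of integration. Fubini's theorem applies because, for $\Rea z>0$,
\[
\frac1\pi\int_0^\infty\!\!\int_0^1 (t/(1-t))^{\a t}|\sin(\a\pi t)|\,e^{-s(t+\Rea z)}\,dt\,ds
\le\frac{1}{\pi\,\Rea z}\int_0^1 (t/(1-t))^{\a t}\,dt<\infty.
\]
Carrying out the inner integral $\int_0^\infty e^{-s(t+z)}\,ds=1/(t+z)$ then gives the stated formula for $\Rea z>0$. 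The right-hand side of $\eqref{eq:St2}$ is holomorphic in $\C\setminus[-1,0]$, since $t+z$ never vanishes there for $t\in[0,1]$, and $f_\a$ is holomorphic in the same region; as the two agree on the half-plane $\Rea z>0$, they coincide on all of $\C\setminus[-1,0]$ by the identity theorem. The only genuinely delicate point in the whole argument is the branch tracking in the boundary-value step; the contour collapse and the application of Fubini are routine once the majorant $|z+1|^{-\a}$ is in hand.
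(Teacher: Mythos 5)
Your proposal is correct and follows essentially the same route as the paper: collapse the rectangle $C(r,c)$ onto the cut $[-1,0]$ as $r\to 0^+$, compute the jump of $h_\a$ across the cut (the paper packages this as the imaginary part of $f_\a(x+ir)$, using that $\a$ is real), pass to the limit by dominated convergence, and then obtain \eqref{eq:St2} from \eqref{eq:plan2} by Fubini and the identity theorem. The only minor difference is the majorant: you use the local bound $|f_\a(z)|=\mathcal O(|z+1|^{-\a})$ near $z=-1$, which works for any $c>1$, whereas the paper bounds the integrand by a constant times $|1-x|^{-\a c}$ and must therefore choose $c<1/\a$; both are adequate.
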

\begin{proof}
Assume $0<\a<1$ and let $c, r$ in the contour from Theorem~\ref{thm:cut} be chosen such that $1<c<2, \a c<1$ and $0<r<1$.

Let now  $r\to 0$ in \eqref{eq:phi21}. The first two terms tend to 0. Using that $\a$ is real we can write
$$
I_2(r,s)-I_4(r,s)=\frac{1}{\pi}\int_{-c}^0 e^{sx}\Ima\{f_\a(x+ir)e^{isr}\} \,dx,
$$
and replacing $x$ by $-x$ in this expression, we get
\begin{eqnarray*}
\varphi_\a(s)&=&-\lim_{r\to 0}\frac{1}{\pi}\int_0^{c} e^{-sx}\Ima\{f_\a(-x+ir)e^{isr}\}\,dx\\
&=& \lim_{r\to 0}\frac{1}{\pi}\int_0^{c}  e^{-sx}\Ima \left\{\exp \left[\a(-x+ir)\Log\left(1+\frac{1}{-x+ir}\right) +irs\right]\right\}\,dx.
\end{eqnarray*}
We have 
$$
\Log\left(1+\frac{1}{-x+ir}\right)=K(x,r)-i\theta(x,r)
$$
where
$$
K(x,r)=\frac12\log \frac{(1-x)^2+r^2}{x^2+r^2},\quad  \cot\theta(x,r)=\frac{x(x-1)+r^2}r
$$
and $\theta(x,r)\in \left]0,\pi\right[$.

We therefore have (leaving out the arguments in $K(x,r), \theta(x,r)$ to simplify notation)
\begin{eqnarray*}
J_r(x) &:=& \Ima \left\{\exp \left[\a(-x+ir)\Log\left(1+\frac{1}{-x+ir}\right) +irs\right]\right\}\\
&=& \exp[\a(-xK+r\theta)]\sin[\a(rK+x\theta)+rs]\\
&=& \left(\frac{x^2+r^2}{(1-x)^2+r^2}\right)^{(\a x)/2}\exp(\a r\theta)\sin[\a(rK+x\theta)+rs],
\end{eqnarray*}
and hence
$$
\lim_{r\to 0}J_r(x)=\left\{ \begin{array}{lll} (x/(1-x))^{\a x}\sin(\a\pi x),\,& \mbox{when}\, 0<x<1,  \\
\infty,\, &\mbox{when}\, x=1,\\
0,\,&\mbox{when}\, 1<x<c.
\end{array}\right.
$$

We have the following inequalities for $0<x<c$, using that $|x-1|<1$, 
\begin{eqnarray*}
|J_r(x)| &\le& \left(\frac{x^2+r^2}{(1-x)^2+r^2}\right)^{(\a x)/2}\exp(\a \pi)\le 
\frac{(c^2+1)^{(\a c)/2}\exp(\a\pi)}{|1-x|^{\a c}},
\end{eqnarray*}
and since  $\a c<1$, the last expression is an integrable majorant over $]0, c[$.
By Lebesgue's Theorem we therefore get
\begin{equation}\label{eq:phi3}
\varphi_\a(s)=\frac{1}{\pi}\int_0^1 \left(x/(1-x)\right)^{\a x}\sin(\a\pi x) e^{-sx}\,dx,
\end{equation}
so $\varphi_\a(s)>0$ for $s\ge 0$ and $0<\a<1$.
 
Inserting \eqref{eq:phi3} in \eqref{eq:plan2} we get
\begin{equation}\label{eq:plan3}
f_\a(z)=\frac{1}{\pi}\int_0^1 \frac{\left(x/(1-x)\right)^{\a x}\sin(\a\pi x)}{x+z}\,dx,\qquad \text{for $\Rea{z}>0$.} 
\end{equation}
By the identity theorem for holomorphic functions \eqref{eq:plan3} holds for $z\notin [-1,0]$. 
\end{proof}

Equation \eqref{eq:phi3} shows that $\varphi_\a$ is completely monotonic for $0<\a<1$. For $\a\to 1^-$ we get that $\varphi_1$ is completely monotonic and in particular non-negative, and by \cite[Section 14.12]{B:F} we get that $f_1$ is a Stieltjes function.

To find the representations of $\varphi_1$ and $f_1$ in analogy with \eqref{eq:phi3} and \eqref{eq:plan3}, it turns out not to be correct to replace $\a$ by 1 in these formulas. 

\newcommand{\g}{u}\newcommand{\h}{w}
Let us introduce the notation
\begin{equation*} 
\g(\a,x)=\left(x/(1-x)\right)^{\a x}\sin(\a\pi x),\quad 0<\alpha\le 1, 0\le x<1.
\end{equation*}
Clearly $\g(\a,x)\ge 0$, and $\g(1,x)$ is seen to be bounded by $\pi$, while
$$
\lim_{x\to 1} \g(\a,x)=\infty, \quad 0<\a<1.
$$

\begin{prop}\label{thm:weak} For $0<\a<1$ define
\begin{equation*} 
\h(\a,x)=\g(\a,x)-\g(1,x), \quad 0\le x<1.
\end{equation*}
Then for any $\phi\in C([0,1])$ we have
\begin{equation}\label{eq:weak2}
\lim_{\a\to 1^-}\frac{1}{\pi}\int_0^1 \h(\a,x)\phi(x)\,dx=\phi(1).
\end{equation}
\end{prop}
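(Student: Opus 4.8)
The plan is to show that the kernels $\tfrac1\pi\h(\a,\cdot)$ form an approximate identity concentrating at the single point $x=1$ as $\a\to1^-$: the mass that $\g(\a,\cdot)$ develops near $x=1$ (where it blows up, while $\g(1,\cdot)$ stays bounded) is exactly what accounts for the value $\phi(1)$. Concretely, I fix $\eta>0$ and use the continuity of $\phi$ at $1$ to pick $\delta\in(0,1/2)$ with $|\phi(x)-\phi(1)|<\eta$ for $x\in[1-\delta,1]$, and then split $\int_0^1\h(\a,x)\phi(x)\,dx$ at the point $1-\delta$.

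On $[0,1-\delta]$ the map $(\a,x)\mapsto\g(\a,x)$ is jointly continuous, hence $\g(\a,\cdot)\to\g(1,\cdot)$ uniformly there as $\a\to1^-$; therefore $\h(\a,\cdot)\to0$ uniformly on $[0,1-\delta]$ and $\tfrac1\pi\int_0^{1-\delta}\h(\a,x)\phi(x)\,dx\to0$. This is the easy half. On $[1-\delta,1]$ I would discard the $\g(1,\cdot)$ part immediately: since $0\le\g(1,\cdot)\le\pi$, its contribution is at most $\delta\|\phi\|_\infty$. Everything then reduces to analysing $\tfrac1\pi\int_{1-\delta}^1\g(\a,x)\phi(x)\,dx$. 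Writing $\eps=1-\a$, substituting $t=1-x$, and using $\sin(\a\pi(1-t))=\sin\bigl(\pi(\eps+\a t)\bigr)$, this becomes $\tfrac1\pi\int_0^\delta\bigl(\tfrac{1-t}{t}\bigr)^{\a(1-t)}\sin\bigl(\pi(\eps+\a t)\bigr)\,dt$.

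The heart of the proof is the mass statement $m_\a(\delta):=\tfrac1\pi\int_{1-\delta}^1\g(\a,x)\,dx\to1$ in the iterated limit $\a\to1^-$ then $\delta\to0$. For the upper bound I use $\bigl(\tfrac{1-t}{t}\bigr)^{\a(1-t)}=t^{-\a}\,t^{\a t}(1-t)^{\a(1-t)}\le t^{-\a}$ together with $\sin(\pi\theta)\le\pi\theta$, giving $m_\a(\delta)\le\int_0^\delta t^{-\a}(\eps+\a t)\,dt=\delta^{\eps}+\a\,\tfrac{\delta^{2-\a}}{2-\a}\to1+\delta$. For the matching lower bound I use $\sin(\pi\theta)\ge\pi\theta\bigl(1-\tfrac{\pi^2\theta^2}{6}\bigr)$ and the fact that $t^{\a t}(1-t)^{\a(1-t)}\ge1-c(\delta)$ on $[0,\delta]$ with $c(\delta)\to0$, which reproduces the same leading term up to factors tending to $1$ as $\delta\to0$; thus $m_\a(\delta)$ is squeezed to $1$. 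Finally, since $\g(\a,\cdot)\ge0$, I replace $\phi$ by $\phi(1)$ on $[1-\delta,1]$ with an error of at most $\eta\,m_\a(\delta)$, so $\tfrac1\pi\int_{1-\delta}^1\g(\a,x)\phi(x)\,dx=\phi(1)\,m_\a(\delta)+O(\eta)$. Combining the three pieces and letting $\a\to1^-$, then $\delta\to0$ with $\eta\to0$, yields the claimed limit $\phi(1)$.

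The main obstacle is precisely this mass computation: the two factors pull in opposite directions — $t^{-\a}\to\infty$ at $t=0$ while $\sin(\pi(\eps+\a t))$ is of size $\eps=1-\a\to0$ — and it is their interplay at the scale $t=O(\eps)$ that produces a finite limit. The mechanism is transparent in the term $\delta^{\eps}=\delta^{1-\a}\to1$, which carries the unit mass of the emerging Dirac measure; obtaining a clean two-sided estimate (rather than a bare dominated-convergence argument on the rescaled, $\delta/\eps$-truncated integral, whose moving endpoint makes a uniform majorant awkward) is the delicate point. As a consistency byproduct, comparing with $\tfrac1\pi\int_0^1\g(\a,x)\,dx=\varphi_\a(0)=e^\a\a/2\to e/2$ from \eqref{eq:phia(0)} and \eqref{eq:St1} recovers the identity $\tfrac1\pi\int_0^1\g(1,x)\,dx=\tfrac e2-1$.
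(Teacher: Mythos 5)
Your argument is correct, and it differs from the paper's proof precisely at the decisive step. Both proofs split the integral near $x=1$ and dispose of the region $[0,1-\delta]$ by uniform convergence of $u(\a,\cdot)=(x/(1-x))^{\a x}\sin(\a\pi x)$ to $u(1,\cdot)$ (your ``easy half'' is the paper's Step 1). Where you differ is in how the unit mass is produced. The paper never localizes: its Step 2 proves the \emph{global} identity $\tfrac{1}{\pi}\int_0^1 \bigl(u(\a,x)-u(1,x)\bigr)\,dx=\tfrac{\a}{2}e^\a-\tfrac{e}{2}+1\to 1$ by combining two exact evaluations, namely $\tfrac{1}{\pi}\int_0^1 u(\a,x)\,dx=\varphi_\a(0)=\tfrac{\a}{2}e^\a$ (from \eqref{eq:St1} and \eqref{eq:phia(0)}) and $\tfrac{1}{\pi}\int_0^1 u(1,x)\,dx=\tfrac{e}{2}-1$, the latter quoted from \cite[Lemma 2]{A:B}; a three-term splitting then finishes. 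You instead estimate the \emph{local} mass $m_\a(\delta)=\tfrac{1}{\pi}\int_{1-\delta}^1 u(\a,x)\,dx$ directly, via $t=1-x$, the factorization $((1-t)/t)^{\a(1-t)}=t^{-\a}\,t^{\a t}(1-t)^{\a(1-t)}$, and two-sided sine bounds, getting $(1-c(\delta))\bigl(1+\delta-\tfrac{\pi^2\delta^3}{18}\bigr)\le\liminf_{\a\to1^-} m_\a(\delta)\le\limsup_{\a\to1^-} m_\a(\delta)\le 1+\delta$, which squeezes $m_\a(\delta)$ to $1$ as $\delta\to0$. This buys you a self-contained, purely elementary proof: it uses neither the representation \eqref{eq:St1} of Theorem \ref{thm:Sta1} nor the cited Alzer--Berg evaluation \eqref{eq:I2} (which your computation in fact re-derives as a corollary rather than consumes as an input), and it makes the mechanism visible --- a Dirac mass forming at $x=1$ on the scale $1-x\sim 1-\a$, carried by the term $\delta^{1-\a}\to1$. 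What the paper's route buys is brevity: given the machinery already established, its mass computation is one line and needs no asymptotic estimates. Two details you should make explicit in a final write-up: the bound $t^{\a t}(1-t)^{\a(1-t)}\ge 1-c(\delta)$ must be uniform in $\a\in(0,1)$, which holds because this quantity is decreasing in $\a$ and hence bounded below by $t^{t}(1-t)^{1-t}$; and the order of limits ($\a\to1^-$ first, then $\delta\to0$, then $\eta\to0$) is legitimate because, for fixed $\eta$, the continuity requirement only imposes an upper bound on $\delta$, leaving you free to shrink $\delta$ afterwards.
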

\begin{proof} We need the following partial results:

\medskip

{\bf Step 1:} $\lim_{\a\to 1^-} \h(\a,x)=0$ for $0\le x<1$, uniformly for $x\in[0,1-\delta]$ for any $0<\delta<1$.

This is clear.
\medskip

{\bf Step 2:}
$$
 \lim_{\a\to 1^-}\frac{1}{\pi}\int_0^1 \h(\a,x)\,dx=1.
$$

To see this, note that by \eqref{eq:St1} and \eqref{eq:phia(0)}
\begin{equation}\label{eq:I1} 
\frac{1}{\pi}\int_0^1 \g(\a,x)\,dx=\varphi_\a(0)=\frac\a2\,e^\a
\end{equation}
and
\begin{equation}\label{eq:I2}
\frac{1}{\pi}\int_0^1 \g(1,x)\,dx=
\frac{1}{\pi}\int_0^1 \left(x/(1-x)\right)^{x}\sin(\pi x)\,dx= \frac e2-1,
\end{equation}
see \cite[Lemma 2, p. 4]{A:B}.
Therefore
$$
\frac{1}{\pi}\int_0^1 \h(\a,x)\,dx=\frac\a2\,e^\a -\frac e2 +1,
$$
which has limit 1 for $\a\to 1$, proving Step 2.

\medskip
Let now $\phi\in C([0,1])$. Let $\varepsilon>0$ be given and by continuity choose $x_1<1$ such that $|\phi(x)-\phi(1)|<\varepsilon$ for $x_1\le x\le 1$. We can then write
\begin{eqnarray*}
&&\frac{1}{\pi}\int_0^1 \h(\a,x)\phi(x)\,dx-\phi(1)\\
&=&\frac{1}{\pi}\int_0^{x_1} \h(\a,x)\phi(x)\,dx+ 
\frac{1}{\pi}\int_{x_1}^1 \h(\a,x)(\phi(x)-\phi(1))\,dx\\
&+&\phi(1)\left(\frac{1}{\pi}\int_{x_1}^1 \h(\a,x)\,dx-1\right):=\sum_{j=1}^3 T_j(\a),
\end{eqnarray*}
and hence
$$
\left|\frac{1}{\pi}\int_0^1 \h(\a,x)\phi(x)\,dx-\phi(1)\right|\le \sum_{j=1}^3 |T_j(\a)|.
$$
By { Step  1} we know that $|T_1(\a)|\to 0$ for $\a\to 1^-$. Furthermore, by \eqref{eq:I1} and \eqref{eq:I2} we find

\begin{eqnarray*}
|T_2(\a)|&\le& \frac{1}{\pi}\int_{x_1}^1 |\h(\a,x)||\phi(x)-\phi(1)|\,dx\le \frac{\varepsilon}{\pi}
\int_{x_1}^1|\h(\a,x)|\,dx\\
&\le & \frac{\varepsilon}{\pi}\int_0^1 (\g(\a,x)+\g(1,x))\,dx =\varepsilon\left(\frac\a2\,e^\a+\frac e2\,-1\right)\le \varepsilon(e-1).
\end{eqnarray*}
Finally, $|T_3(\a)|$ tends to 
0 for $\a\to 1^-$ because
$$
|T_3(\a)|=|\phi(1)|\left|\frac{1}{\pi}\int_{0}^1 \h(\a,x)\,dx-1-\frac{1}{\pi}\int_{0}^{x_1} \h(\a,x)\,dx\right|,
$$
and we then use { Step 1} and { Step 2}. 

In total we get
$$
\limsup_{\a\to 1^-}\left|\frac{1}{\pi}\int_0^1 \h(\a,x)\phi(x)\,dx-\phi(1)\right|\le \varepsilon(e-1),
$$
and \eqref{eq:weak2} follows. 
\end{proof}

Applying the above result to the continuous functions $\phi(x)=e^{-sx}$ and $\phi(x)=(x+z)^{-1}$ for $z\notin [-1,0]$ we get

\begin{cor}\label{thm:repa=1}
\begin{equation}\label{eq:St1_1}
\varphi_1(s)=e^{-s}+\frac{1}{\pi}\int_0^1 \left(x/(1-x)\right)^{x}\sin(\pi x) e^{-sx}\,dx,\quad s\ge 0.
\end{equation}
\begin{equation*} 
f_1(z)=
\frac{1}{z+1}+ \frac{1}{\pi}\int_0^1 \frac{\left(x/(1-x)\right)^{x}\sin(\pi x)}{x+z}\,dx, \quad
z\notin [-1,0].
\end{equation*}
\end{cor}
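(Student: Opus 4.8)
The plan is to read both identities as instances of the weak-convergence statement \eqref{eq:weak2} in Proposition~\ref{thm:weak}, applied to two specific continuous test functions, after which the only remaining ingredient is the continuity of $\a\mapsto\varphi_\a(s)$ and $\a\mapsto f_\a(z)$ at $\a=1$. Recalling that by definition $w(\a,x)=u(\a,x)-u(1,x)$, for a fixed $\phi\in C([0,1])$ I would split
\[
\frac{1}{\pi}\int_0^1 w(\a,x)\phi(x)\,dx=\frac{1}{\pi}\int_0^1 u(\a,x)\phi(x)\,dx-\frac{1}{\pi}\int_0^1 u(1,x)\phi(x)\,dx,
\]
noting that both integrals on the right converge separately: the first is finite because $u(\a,\cdot)\phi$ is integrable (for $0<\a<1$ the growth $(1/(1-x))^{\a}$ near $x=1$ is integrable since $\a<1$), and the second is finite because $u(1,\cdot)$ is bounded by $\pi$. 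The second integral is moreover a constant independent of $\a$. Since Proposition~\ref{thm:weak} asserts that the left-hand side tends to $\phi(1)$ as $\a\to1^-$, the first integral tends to $\phi(1)$ plus that constant.

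First I would take $\phi(x)=e^{-sx}$, which lies in $C([0,1])$ with $\phi(1)=e^{-s}$. By \eqref{eq:St1} the first integral equals $\varphi_\a(s)$ for $0<\a<1$. Since $(\a,s)\mapsto\varphi_\a(s)$ is entire on $\C^2$, the map $\a\mapsto\varphi_\a(s)$ is in particular continuous, so $\lim_{\a\to1^-}\varphi_\a(s)=\varphi_1(s)$. Passing to the limit in the splitting above then gives
\[
\varphi_1(s)-\frac{1}{\pi}\int_0^1\left(x/(1-x)\right)^{x}\sin(\pi x)\,e^{-sx}\,dx=e^{-s},
\]
which is exactly \eqref{eq:St1_1}.

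Next I would take $\phi(x)=(x+z)^{-1}$ for a fixed $z\notin[-1,0]$. The hypothesis $z\notin[-1,0]$ is precisely what guarantees $x+z\neq0$ for every $x\in[0,1]$, so $\phi\in C([0,1])$ with $\phi(1)=1/(z+1)$. By \eqref{eq:St2} the corresponding first integral equals $f_\a(z)$, and from the defining formula \eqref{eq:f1z} the map $\a\mapsto f_\a(z)$ is continuous (indeed entire), so $\lim_{\a\to1^-}f_\a(z)=f_1(z)$. The same passage to the limit yields the stated representation of $f_1(z)$.

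The argument is entirely routine once Proposition~\ref{thm:weak} is available; the substance of the result lives there. The only points deserving a word of care are the legitimacy of splitting $w$ into $u(\a,\cdot)-u(1,\cdot)$ under the integral (which rests on the separate integrability noted above, in particular the boundedness of $u(1,\cdot)$), and the continuity on $[0,1]$ of the test function $(x+z)^{-1}$, which is exactly what forces the restriction $z\notin[-1,0]$. I do not anticipate any genuine obstacle beyond bookkeeping.
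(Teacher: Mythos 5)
Your proposal is correct and takes essentially the same route as the paper, whose proof consists precisely of applying Proposition~\ref{thm:weak} to the test functions $\phi(x)=e^{-sx}$ and $\phi(x)=(x+z)^{-1}$ for $z\notin[-1,0]$. The only difference is one of explicitness: you spell out the splitting of $w(\alpha,\cdot)$ into its two integrable pieces and the continuity of $\alpha\mapsto\varphi_\alpha(s)$ and $\alpha\mapsto f_\alpha(z)$ at $\alpha=1$, steps the paper leaves implicit.
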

 $ $
\begin{prop}\label{thm:nonSti} The function $f_\a$ is not a Stieltjes function when $\a>1.$
\end{prop}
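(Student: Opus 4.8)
The plan is to exploit the fact that a Stieltjes function, being of the form \eqref{eq:St} with $a\ge 0$ and a positive measure $\mu$, is a Pick‑type function with a fixed sign of its imaginary part off the positive axis. Indeed, for $\Ima s>0$ one has $\Ima\frac{1}{s+t}=-\Ima s/|s+t|^2<0$ for every $t\ge 0$, so that $\Ima f(s)\le 0$ throughout the open upper half-plane whenever $f$ is representable by \eqref{eq:St}. Since $f_\a$ is holomorphic in $\C\setminus[-1,0]$ and agrees with the Stieltjes representation on $\,]0,\infty[\,$, the identity theorem forces the two to coincide on all of $\C\setminus\,]-\infty,0]$. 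Hence it suffices, for each $\a>1$, to produce a single point $z_0$ with $\Ima z_0>0$ and $\Ima f_\a(z_0)>0$.

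First I would read off the boundary behavior of $f_\a$ near the cut $[-1,0]$ directly from the computation already carried out in the proof of Theorem~\ref{thm:Sta1}. That evaluation of $J_r(x)=\Ima\{\exp[\a(-x+ir)\Log(1+1/(-x+ir))]\}$ is purely algebraic and valid for every $\a\in\C$; setting $s=0$ there and letting $r\to 0^+$ gives, for $0<x<1$,
\begin{equation*}
\lim_{r\to 0^+}\Ima h_\a(-x+ir)=\left(x/(1-x)\right)^{\a x}\sin(\a\pi x).
\end{equation*}
Because $f_\a=e^\a-h_\a$ and $e^\a$ is real, this yields
\begin{equation*}
\lim_{r\to 0^+}\Ima f_\a(-x+ir)=-\left(x/(1-x)\right)^{\a x}\sin(\a\pi x),\quad 0<x<1.
\end{equation*}

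The decisive step is the choice of $x_0$. For $\a>1$ we have $1/\a<1$, so I can select $x_0\in\,]1/\a,1[\,$ close enough to $1/\a$ that $\a x_0<2$; then $\a x_0\in\,]1,2[\,$ and hence $\sin(\a\pi x_0)<0$. For such $x_0$ the limit above is strictly positive, and by continuity of $f_\a$ off the cut one gets $\Ima f_\a(-x_0+ir)>0$ for all sufficiently small $r>0$. Since $-x_0+ir$ lies in the open upper half-plane and in the domain $\C\setminus[-1,0]$ of holomorphy of $f_\a$, this contradicts the sign condition $\Ima f_\a\le 0$ that \eqref{eq:St} would impose, and therefore $f_\a$ is not a Stieltjes function when $\a>1$.

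I expect the only delicate points to be bookkeeping rather than genuine obstacles: confirming the limit of $J_r$ (already essentially established in Theorem~\ref{thm:Sta1}, since $\theta(x,r)\to\pi$ and $K(x,r)\to\log\frac{1-x}{x}$ as $r\to0^+$) and checking that the test point stays off the cut. Conceptually, the same computation explains the result through the Stieltjes--Perron inversion formula: were $f_\a$ a Stieltjes function, its unique representing measure would have to possess density $\frac{1}{\pi}\left(x/(1-x)\right)^{\a x}\sin(\a\pi x)$ on $[0,1]$ --- exactly the integrand of \eqref{eq:St2} continued to $\a>1$ --- and this density is negative on $\,]1/\a,2/\a[\,$, which is the true reason positivity fails.
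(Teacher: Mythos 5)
Your proof is correct and follows essentially the same route as the paper: both use the sign condition $\Ima f\le 0$ (the paper states it strictly for non-constant Stieltjes functions) in the upper half-plane forced by the representation \eqref{eq:St}, compute the boundary limit $\lim_{r\to 0^+}\Ima f_\a(-x+ir)=-(x/(1-x))^{\a x}\sin(\a\pi x)$ via the $J_r$ evaluation from the proof of Theorem~\ref{thm:Sta1}, and pick $x$ with $1<\a x<2$ so that this limit is strictly positive, yielding the contradiction. Your added remarks (the identity-theorem justification and the Stieltjes--Perron heuristic) are sound but inessential embellishments of the paper's argument.
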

\begin{proof} By \eqref{eq:St} a non-constant Stieltjes function $f$ has an extension to a holomorphic function in $\mathcal A$ satisfying
\begin{equation}\label{eq:Stho}
\Ima{f(z)}<0 \quad \mbox{for}\quad \Ima{z}>0,
\end{equation}
because for $z=x+iy, y>0$ we have
$$
\Ima{f(z)}=-y\int_0^\infty \frac{d\mu(t)}{|z+t|^2} <0.
$$
For $\a>1$ let $0<x<1$ be chosen such that $1<\a x<2$. For $y=r>0$ we have
$$
\Ima{f_\a(-x+ir)}=-\Ima\left\{\exp\left[\a(-x+ir)\Log\left(1+\frac{1}{-x+ir}\right)\right]\right\},
$$
and proceeding as in the proof of Theorem~\ref{thm:Sta1} we  get
$$
\lim_{r\to 0} \Ima{f_\a(-x+ir)}=-(x/(1-x))^{\a x}\sin(\a\pi x)>0.
$$
This shows that $\Ima{f_\a(-x+ir)}>0$ for $r>0$ sufficiently small. By \eqref{eq:Stho} this shows that $f_\a$ is not a Stieltjes function when $\a>1$.
\end{proof}

Using the formulas for $\varphi_\a$ in Theorem~\ref{thm:Sta1} and Corollary~\ref{thm:repa=1} we can prove that the sequence $(p_{n+1}(\a))_{n\ge 0}$  is a Hausdorff moment sequence, i.e., the moment sequence of a positive measure on $[0,1]$.

\begin{thm}\label{thm:Haus} For $0<\a<1$ we have
\begin{equation}\label{eq:Haus1}
p_{n+1}(\a)=\frac{e^{-\a}}{\pi}\int_0^1 (x/(1-x))^{\a x}\sin(\a\pi x) x^n\,dx,\quad n\ge 0,
\end{equation}
while for $\a=1$
\begin{equation}\label{eq:Haus2}
p_{n+1}(1)=e^{-1} +\frac{e^{-1}}{\pi}\int_0^1 (x/(1-x))^{ x}\sin(\pi x) x^n\,dx,\quad n\ge 0.
\end{equation}
\end{thm}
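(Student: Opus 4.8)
The plan is to read off both formulas by comparing the two representations of $\varphi_\a$ that are already available: the power series \eqref{eq:ent1} of Theorem~\ref{thm:intro2} and the integral representation \eqref{eq:St1} of Theorem~\ref{thm:Sta1} (respectively \eqref{eq:St1_1} of Corollary~\ref{thm:repa=1} in the case $\a=1$). Since $\varphi_\a$ is entire, its Taylor coefficients at $s=0$ are uniquely determined, so it suffices to expand the integral representation in powers of $s$ and match coefficients term by term against \eqref{eq:ent1}.

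First I would treat $0<\a<1$. Inserting $e^{-sx}=\sum_{n\ge 0}(-sx)^n/n!$ into \eqref{eq:St1}, the crucial step is to interchange summation and integration over $]0,1[$. To justify this I would use that the integrand $(x/(1-x))^{\a x}\sin(\a\pi x)$ is non-negative and integrable on $[0,1]$ — this is exactly the majorant estimate established in the proof of Theorem~\ref{thm:Sta1}, where the condition $\a<1$ makes the singularity at $x=1$ integrable — together with the elementary bound $\bigl|\sum_{n=0}^N(-sx)^n/n!\bigr|\le e^{|s|x}\le e^{|s|}$ valid for $x\in[0,1]$. Dominated convergence then gives $\varphi_\a(s)=\frac{1}{\pi}\sum_{n\ge 0}\frac{(-1)^n s^n}{n!}\int_0^1 (x/(1-x))^{\a x}\sin(\a\pi x)\,x^n\,dx$. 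Comparing with \eqref{eq:ent1} and cancelling the common factor $(-1)^n/n!$ from each coefficient yields $e^\a p_{n+1}(\a)=\frac1\pi\int_0^1(x/(1-x))^{\a x}\sin(\a\pi x)\,x^n\,dx$, which is \eqref{eq:Haus1}.

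For $\a=1$ the same expansion is applied to \eqref{eq:St1_1}; the only new feature is the additional term $e^{-s}=\sum_{n\ge 0}(-s)^n/n!$, whose $n$-th Taylor coefficient is $(-1)^n/n!$ and hence contributes a summand $1$ after cancellation. Matching coefficients with \eqref{eq:ent1} at $\a=1$ then gives $e\,p_{n+1}(1)=1+\frac1\pi\int_0^1(x/(1-x))^{x}\sin(\pi x)\,x^n\,dx$, i.e. \eqref{eq:Haus2}.

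The only point requiring genuine care is the dominated-convergence argument licensing the term-by-term integration; everything else is bookkeeping with power-series coefficients, using the integrability already proved in Theorem~\ref{thm:Sta1}. I expect no further obstacle. Finally, I would remark that once \eqref{eq:Haus1}–\eqref{eq:Haus2} hold, the non-negativity of the integrands exhibits $(p_{n+1}(\a))_{n\ge 0}$ explicitly as the moment sequence of a positive measure supported on $[0,1]$, thereby establishing the Hausdorff moment property announced before the theorem.
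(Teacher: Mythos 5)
Your proposal is correct and takes essentially the same route as the paper: insert the power series of $e^{-sx}$ into \eqref{eq:St1} (resp.\ \eqref{eq:St1_1} for $\a=1$), interchange summation and integration, and identify Taylor coefficients with those in \eqref{eq:ent1}. Your explicit dominated-convergence justification of the interchange fills in a step the paper leaves implicit; the paper additionally notes an alternative for $\a=1$, namely applying Proposition~\ref{thm:weak} to $\phi(x)=x^n$.
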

\begin{proof} Inserting the power series for $e^{-sx}$ in Equation \eqref{eq:St1} and interchanging summation and integration, we get the power series expansion for $\varphi_\a$. Compared with \eqref{eq:ent1} this yields \eqref{eq:Haus1}.

To get the case $\a=1$ we can proceed similarly with the formula for $\varphi_1$ in Corollary~\ref{thm:repa=1}, or we can apply Proposition~\ref{thm:weak} to $\phi(x)=x^n$.
\end{proof}
\noindent See Remark \ref{thm:n=-1} in the Appendix, for a proof that the sequence $(p_{n}(\a))_{n\ge 0}$  is also a Hausdorff moment sequence.

We can now prove the equivalence of the three conditions in Theorem~\ref{thm:St-cm}. 
\begin{proof}[Proof of Theorem~\ref{thm:St-cm}]$ $\\
"$(i) \Rightarrow (ii)$" If $(p_{n+1}(\a))_{n\ge 0}$ is a Stieltjes moment sequence, i.e., \eqref{eq:Sti} holds for a positive measure $\sigma_\a$ on $[0,\infty[$, then $\a=2p_1(\a)=2\int_0^\infty\,d\sigma_\a(x)\ge 0$.
Without loss of generality we can assume $\a>0$. By Proposition~\ref{thm:p-bounds}-$(iii)$ we know that  $p_n(\a)\le l^n,\;n\ge 0$, where $l=\max(1,\alpha)$,  which implies that $\sigma_\a$ is supported by the interval $[0,l]$. By \eqref{eq:ent1}  we then get
$$
\varphi_\a(s)=e^\a\sum_{n=0}^\infty (-1)^n\frac{s^n}{n!}\int_0^l x^n\,d\sigma_\a(x)
=e^\a \int_0^l e^{-sx}\,d\sigma_\a(x),
$$
which shows that $\varphi_\a$ is completely monotonic.

"$(ii) \Rightarrow (iii)$" If $\varphi_\a$ is completely monotonic, hence of the form
$$
\varphi_\a(s)=\int_0^\infty e^{-ts}\,d\mu(t)
$$
for a positive measure $\mu$, we get, using \eqref{eq:ent1},
$$
(-1)^n\varphi_\a^{(n)}(0)=\int_0^\infty t^n\,d\mu(t)=e^\a p_{n+1}(\a)\geq0,\quad n\geq 0
$$
but this is only possible if $\a\geq0$. Furthermore, 
$$
f_\a(x)=\int_0^\infty e^{-xs}\varphi_\a(s)\,ds=\int_0^\infty\frac{d\mu(t)}{x+t},\quad x>0,
$$
so $f_\a$ is a Stieltjes function  and hence $\a\le 1$ by Theorem~\ref{thm:Stigen}.

"$(iii) \Rightarrow(i)$" follows from Theorem~\ref{thm:Haus}. 
\end{proof}

\section{The case $\a>1$}\label{sec:a>1}
In the previous section we were able to express the functions $\varphi_\a$ with $0\leq\a\leq1$ as in Equation \eqref{eq:Lapsigma}, proving that they are nonnegative on $[0,\infty[.$  The purpose of this section is to show that, for $1<\a$, we can still find a component in $\varphi_\a$ analogous to  \eqref{eq:Lapsigma}, but  a correcting term needs to be added, which is given by a contour integral on a suitable circle that goes around the singularity $-1$: see Equations \eqref{eq:phi7} and \eqref{eq:Phi}.  

\medskip

For $a\in\C$ and $r>0$ we denote by  $\partial D(a,r)$ the positively oriented circle with center $a$ and radius $r$.

Let $\a>1$ be fixed, and let $0<\varepsilon<1-1/\a$. We consider the closed positively oriented contour $T(\a,\varepsilon)$ starting at $i\varepsilon$, then moving left along the horizontal line $x+i\varepsilon$ till it cuts the circle $\partial D(-1,1-1/\a)$ at a point denoted $x(\varepsilon)+i\varepsilon$. We then move along the circle till we reach the complex conjugate point $x(\varepsilon)-i\varepsilon$ (passing $-2+1/\a$ on the way), and then we move along the horizontal line $x-i\varepsilon$ till we reach $-i\varepsilon$, which is connected to $i\varepsilon$ via  the vertical segment $iy, y\in[-\varepsilon,\varepsilon]$.  

The contour $T(\a,\varepsilon)$ can replace the contour $C(r,c)$ of Theorem~\ref{thm:cut} so we have
\begin{equation}\label{eq:phi5}
\varphi_\a(s)=\frac{1}{2\pi i}\int_{T(\a,\varepsilon)}f_\a(z)e^{sz}\,dz,\quad s\ge 0.
\end{equation}

We shall now obtain a new expression for $\varphi_\a$ by letting $\varepsilon$ tend to 0. Note that $\lim_{\varepsilon\to 0} x(\varepsilon)=-1/\a$.

This leads to the following result.

\begin{thm}\label{thm:alpha>1} For $\a>1$ we have 
\begin{eqnarray}\label{eq:phi7}
\varphi_\a(s)=\frac{1}{\pi}\int_0^{1/\a}(x/(1-x))^{\a x}\sin(\a\pi x)e^{-sx}\,dx-\Phi(\a,s),
 \end{eqnarray}
where
\begin{equation}\label{eq:Phi}
\Phi(\a,s):=\frac{1}{2\pi i}\int_{\partial D(-1,1-1/\a)}h_{\a}(z)e^{sz}\,dz
\end{equation}
and $h_\a(z)$ is given in \eqref{eq:f1z}.
The first term on the right-hand side of  \eqref{eq:phi7} is a completely monotonic function.
\end{thm}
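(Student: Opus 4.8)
The plan is to start from the contour representation \eqref{eq:phi5}, split $T(\a,\eps)$ into its four constituent pieces -- the upper and lower horizontal segments, the major circular arc, and the short vertical segment across the origin -- and let $\eps\to 0$ in each piece separately, identifying the surviving contributions with the two terms of \eqref{eq:phi7}. The vertical segment $\{iy:y\in[-\eps,\eps]\}$ is disposed of immediately: its length is $2\eps$ and $f_\a$ is continuous at $0$ with value $e^\a-1$ by \eqref{eq:fa(0)}, so its contribution is $\mathcal O(\eps)$.

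For the two horizontal segments I would exploit that, $\a$ and $s$ being real, the Laurent coefficients in \eqref{eq:laurent-f} are real, whence $f_\a(\bar z)=\overline{f_\a(z)}$ on all of $\C\setminus[-1,0]$ by the identity theorem. Exactly as in the proof of Theorem~\ref{thm:Sta1}, the upper and lower segments then combine into $-\frac1\pi\int_{x(\eps)}^0\Ima\{f_\a(t+i\eps)e^{s(t+i\eps)}\}\,dt$; substituting $t=-x$, using $x(\eps)\to-1/\a$ and the pointwise limit $\Ima\{f_\a(-x+i\eps)e^{is\eps}\}\to -(x/(1-x))^{\a x}\sin(\a\pi x)$ established in the proof of Theorem~\ref{thm:Sta1}, this tends to the first term of \eqref{eq:phi7}. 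The crucial simplification compared with Theorem~\ref{thm:Sta1} is that the range now ends at $1/\a<1$, strictly before the strong singularity of $h_\a$ at $z=-1$: on $[0,1/\a]$ the factor $(x/(1-x))^{\a x}$ stays bounded, so a crude uniform bound serves as a dominated-convergence majorant and no analogue of the condition $\a c<1$ is needed.

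For the major arc I would note that as $\eps\to 0$ both endpoints $x(\eps)\pm i\eps$ converge to the rightmost point $-1/\a$ of the circle, so the arc sweeps out the full positively oriented circle $\partial D(-1,1-1/\a)$ minus a minor arc whose angular width, and hence whose contribution, vanishes. Writing $f_\a=e^\a-h_\a$ and using that $\frac1{2\pi i}\oint_{\partial D(-1,1-1/\a)}e^\a e^{sz}\,dz=0$ because the integrand is entire, the arc leaves exactly $-\Phi(\a,s)$ as in \eqref{eq:Phi}. Here one must check that $\Phi$ is well defined: the circle meets the cut $[-1,0]$ only at the interior point $-1/\a$ (its other real crossing $-2+1/\a$ lies in $\,]-\infty,-1[\,$, where $h_\a$ is holomorphic), and at that interior point both boundary values of $h_\a$ are finite, so the jump is harmless for the integral. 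Adding the three limits gives \eqref{eq:phi7}.

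Complete monotonicity of the first term is then immediate: on $[0,1/\a]$ one has $\a x\in[0,1]$, hence $\a\pi x\in[0,\pi]$ and $\sin(\a\pi x)\ge 0$, while $(x/(1-x))^{\a x}\ge 0$, so the term is the Laplace transform $\int_0^{1/\a}e^{-sx}\,d\nu(x)$ of a positive measure $\nu$ on $[0,1/\a]$ and is therefore completely monotonic. I expect the main obstacle to lie entirely in the bookkeeping: pinning down the orientation of the major arc together with the Schwarz-reflection sign so that the $-\Phi(\a,s)$ and the $+\frac1\pi\int_0^{1/\a}$ emerge with the correct signs simultaneously, and carrying out the (mild) verification that $\Phi$ is well defined despite the circle grazing the cut at the interior point $-1/\a$.
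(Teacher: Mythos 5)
Your proposal is correct and follows essentially the same route as the paper's own proof: starting from \eqref{eq:phi5}, letting $\eps\to 0$, discarding the vertical segment, combining the two horizontal segments via conjugate symmetry into the real integral over $[0,1/\a]$ (exactly the technique of Theorem~\ref{thm:Sta1}, with the same simplification that the singularity at $-1$ is never approached), and identifying the circular part with $-\Phi(\a,s)$ after removing the entire term $e^\a e^{sz}$. The extra details you supply (the Schwarz-reflection justification, the well-definedness of $\Phi$ despite the cut crossing at $-1/\a$, and the explicit positivity argument for complete monotonicity) are points the paper leaves implicit, and they are all correct.
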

\begin{proof}
Letting $\varepsilon\to 0$ in \eqref{eq:phi5}, we note that  the contribution from $iy, y\in[-\varepsilon,\varepsilon]$ tends to $0$, and we get
$$
\varphi_\a(s)=\Phi_1(\a,s)-\Phi_2(\a,s),\quad s\ge 0,
$$
where
\begin{equation}\label{eq:Phi1}
\Phi_1(\a,s):=\frac{1}{2\pi i}\int_{\partial D(-1,1-1/\a)}f_\a(z)e^{sz}\,dz=-\Phi(\a,s)
\end{equation}
and
\begin{equation*} 
\Phi_2(\a,s):=
\lim_{\varepsilon\to 0}\frac{1}{\pi}\int_{0}^{-x(\varepsilon)}\Ima\{ f_\a(-x+i\varepsilon) e^{s(-x+i\varepsilon)}\}\,dx.
\end{equation*}
In \eqref{eq:Phi1} we 
used that the term $e^\a e^{sz}$ has integral 0 over the circle, because it is an entire function of $z$. 
We further get
\begin{eqnarray*}
\lefteqn{\Phi_2(\a,s)=}\\
&-&\lim_{\varepsilon\to 0}\frac{1}{\pi}\int_{0}^{-x(\varepsilon)}\Ima\{\exp[\a(-x+i\varepsilon)\Log(1+1/(-x+i\varepsilon))+s(-x+i\varepsilon)]\}\,dx\\
&=& -\frac{1}{\pi}\int_0^{1/\a}(x/(1-x))^{\a x}\sin(\a\pi x)e^{-sx}\,dx,
\end{eqnarray*}
 where we have used the same technique as in the proof of Theorem~\ref{thm:Sta1}. 
This gives formula \eqref{eq:phi7}.
\end{proof}

\section{Properties of the sequences $(p_n(\a))_{n\ge 0}$. }\label{sec:app}
This section is devoted to the proof of the remaining properties of the polynomials $p_n$, which were stated in   Proposition~\ref{thm:p-bounds} and Theorem \ref{thm:alt}.

\begin{proof}[Proof of Proposition~\ref{thm:p-bounds}-(v,\,vi)]
In the case $0<\a\leq1$ Properties $(v)$ and $(vi)$ follow directly from the formulas \eqref{eq:Haus1} and \eqref{eq:Haus2}. 

From \eqref{eq:poly-rec} we estimate, for $\a>0$,

\begin{eqnarray}\label{eq:pnp_pn}
	p_{n+1}(\a)&=&\frac{\a}{n+1}\left[\frac12 p_n(\a)+\sum_{k=1}^{n}\frac{k+1}{k+2}p_{n-k}(\a)
	\right]\nonumber\\
	&=&\frac{\a}{n+1}\left[\frac12 p_n(\a)+\sum_{k=0}^{n-1}\frac{k+2}{k+3}p_{n-1-k}(\a)\right]\nonumber\\
	&\ge&\frac{\a}{n+1}\left[\frac12 p_n(\a)+\sum_{k=0}^{n-1}\frac{k+1}{k+2}p_{n-1-k}(\a)\right]\nonumber\\
	&=&\frac{\a}{n+1}\left[\frac12 p_n(\a)+\frac{n}{\a}p_n(\a)\right]= \frac{\a/2+n}{n+1}p_n(\a)
\end{eqnarray}
	and for $2\le\a$ this proves $(v)$. 

Property $(vi)$ for $1<\a$ will be proved in  Proposition \ref{thm:toinfty}.
\end{proof}

In order to study further the sequence $(p_n(\a))_{n\ge 0}$ it is useful to introduce  the sequence of the mean-values
\begin{equation*} 
M_n(\a):=\frac{1}{n+1}\sum_{k=0}^n p_k(\a),\quad n\ge 0,
\end{equation*} 
which satisfies the recursion 
\begin{equation}\label{eq:mean2}
M_n(\a)=\frac{nM_{n-1}(\a)+p_n(\a)}{n+1}.
\end{equation}

Note that by \eqref{eq:poly-rec}
\begin{equation}\label{eq:mean1}
\frac{\a}{2}M_n(\a)\le p_{n+1}(\a)< \a M_n(\a),\quad \a>0,\, n\ge 0.
\end{equation}

 Observe that  for any $\a>0$ we can estimate, for  $n\ge k_0\ge1$,
\begin{eqnarray}\label{eq:fund}
p_{n+1}(\a)&=&\frac{\a}{n+1}\left[\sum_{k=0}^{k_0-1}\frac{k+1}{k+2}p_{n-k}(\a)+
\sum_{k=k_0}^{n}\frac{k+1}{k+2}p_{n-k}(\a)\right]\nonumber\\
&\ge& \frac{\a}{n+1}\sum_{k=0}^{k_0-1}\frac{k+1}{k+2}p_{n-k}(\a) + \frac{k_0+1}{k_0+2}\frac{\a}{n+1}
\sum_{k=k_0}^n p_{n-k}(\a) \nonumber\\
&=&\a\frac{k_0+1}{k_0+2}M_n(\a)-\frac{\a}{n+1}\sum_{k=0}^{k_0-1}\left(\frac{k_0+1}{k_0+2}-\frac{k+1}{k+2}\right)p_{n-k}(\a),
\end{eqnarray}
where  the last sum in \eqref{eq:fund} is positive.

\begin{prop}\label{thm:toinfty} For $\a>1$ we have
$$
\lim_{n\to\infty} M_n(\a)=\infty,
\qquad\qquad
\lim_{n\to\infty} p_n(\a)=\infty.
$$
\end{prop}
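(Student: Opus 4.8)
The plan is to route everything through the mean sequence $M_n(\a)$. First I would reduce the two limits to one: by \eqref{eq:mean1} we have $p_n(\a)\ge\frac{\a}{2}M_{n-1}(\a)$, so once $\lim_{n\to\infty}M_n(\a)=\infty$ is established, $\lim_{n\to\infty}p_n(\a)=\infty$ follows immediately. Thus the whole task becomes proving $M_n(\a)\to\infty$ for $\a>1$.

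The engine is the estimate \eqref{eq:fund}. Since $\a>1$, I first fix an integer $k_0\ge 1$ so large that $\lambda:=\a\frac{k_0+1}{k_0+2}>1$, which is possible because $\a\frac{k_0+1}{k_0+2}\to\a$ as $k_0\to\infty$. With this $k_0$, \eqref{eq:fund} reads $p_{n+1}(\a)\ge\lambda M_n(\a)-R_n$ for $n\ge k_0$, where $R_n=\frac{\a}{n+1}\sum_{k=0}^{k_0-1}\bigl(\frac{1}{k+2}-\frac{1}{k_0+2}\bigr)p_{n-k}(\a)\ge 0$. Combining this with \eqref{eq:mean2} written as $(n+2)M_{n+1}(\a)=(n+1)M_n(\a)+p_{n+1}(\a)$ gives the multiplicative lower bound $M_{n+1}(\a)\ge\frac{n+1+\lambda}{n+2}M_n(\a)-\frac{R_n}{n+2}$. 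Since $\frac{n+1+\lambda}{n+2}=1+\frac{\lambda-1}{n+2}$ exceeds $1$, the telescoped product of these factors behaves like $n^{\lambda-1}\to\infty$, so the proof comes down to showing that the correction $\frac{R_n}{n+2}$ cannot cancel this growth.

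The hard part will be bounding $R_n$ \emph{relatively}, namely $R_n\le\frac{C}{n+1}M_n(\a)$ for a constant $C$; everything else is bookkeeping. Here I would use that only finitely many terms appear: the coefficients satisfy $\frac{1}{k+2}-\frac{1}{k_0+2}\le\frac12$, so $R_n\le\frac{\a}{2(n+1)}\sum_{k=0}^{k_0-1}p_{n-k}(\a)$. Each term is controlled by a nearby mean through \eqref{eq:mean1}, giving $p_{n-k}(\a)<\a M_{n-k-1}(\a)$, and consecutive means are comparable: from \eqref{eq:mean2} and $p_m(\a)\ge 0$ one gets $M_{m-1}(\a)\le\frac{m+1}{m}M_m(\a)$, which telescopes to $M_{n-k-1}(\a)\le\frac{n+1}{n-k}M_n(\a)\le 2M_n(\a)$ for all $0\le k\le k_0-1$ once $n$ is large. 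Hence $R_n\le\frac{\a^2k_0}{n+1}M_n(\a)$.

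Feeding this back yields $M_{n+1}(\a)\ge\bigl(1+\frac{\lambda-1}{n+2}-\frac{\a^2k_0}{(n+1)(n+2)}\bigr)M_n(\a)\ge\bigl(1+\frac{(\lambda-1)/2}{n+2}\bigr)M_n(\a)$ for all $n\ge n_1$, because the $O(1/n^2)$ term is eventually dominated by half of $\frac{\lambda-1}{n+2}$. Since $M_{n_1}(\a)>0$ (indeed $M_m(\a)\ge\frac{1}{m+1}>0$ as $p_0=1$) and the product $\prod_{j\ge n_1}\bigl(1+\frac{(\lambda-1)/2}{j+2}\bigr)$ diverges, its factors being $\sim\frac{(\lambda-1)/2}{j}$ with divergent sum, telescoping gives $M_n(\a)\to\infty$, and with it $p_n(\a)\to\infty$. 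I expect the comparison $M_{n-k-1}(\a)\le 2M_n(\a)$, which turns the additive error $R_n$ into a small multiple of $M_n(\a)$ and thereby preserves the harmonic-type divergence of the product, to be the delicate point of the argument.
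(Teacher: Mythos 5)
Your proof is correct, but it follows a genuinely different route from the paper's. The paper splits into two cases ($\a>2$ and $1<\a\le 2$) and argues by contradiction: it assumes $(p_n(\a))$ is bounded by some $B$, uses that bound (or, in the second case, the a priori bound $p_n(\a)\le n$ from Proposition~\ref{thm:p-bounds}-$(iii)$ together with $p_n(\a)\ge p_n(1)\ge 1/e$) to make the error term in \eqref{eq:fund} negligible, and then runs a two-step argument (unboundedness of $p_n$, then eventual monotonicity of $M_n$ by induction) to reach a contradiction. You instead give a direct, unified argument for all $\a>1$: your key new ingredient is the comparison of consecutive means, $M_{m-1}(\a)\le\frac{m+1}{m}M_m(\a)$ (immediate from \eqref{eq:mean2} and positivity), which telescopes to $M_{n-k-1}(\a)\le 2M_n(\a)$ for the finitely many indices appearing in the error term of \eqref{eq:fund}. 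This converts the additive error $R_n$ into a relative error $O(M_n(\a)/n)$, yielding the multiplicative recursion $M_{n+1}(\a)\ge\bigl(1+\frac{(\lambda-1)/2}{n+2}\bigr)M_n(\a)$ with $\lambda=\a\frac{k_0+1}{k_0+2}>1$, whose divergent product finishes the proof. What your approach buys: no case distinction, no argument by contradiction, no reliance on the bounds $p_n(\a)\le n$ or $p_n(\a)\ge 1/e$, and as a bonus a quantitative rate ($M_n(\a)$ grows at least like $n^{(\lambda-1)/2}$, hence like $n^{\a-1-\eps}$ for any $\eps>0$ by taking $k_0$ large). What the paper's approach buys: it stays entirely within the crude absolute bounds already established earlier, at the cost of the more intricate two-step contradiction/induction structure. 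All the ingredients you invoke (\eqref{eq:mean1}, \eqref{eq:mean2}, \eqref{eq:fund}, positivity from Proposition~\ref{thm:p-bounds}-$(i)$) are established in the paper before this proposition, so there is no circularity.
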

\begin{proof}
{\bf The case $\a>2$.}

In this case $p_{n+1}(\a)>M_n(\a)$ by \eqref{eq:mean1}, and then  \eqref{eq:mean2} implies that $M_{n+1}(\a)> M_n(\a)$, so $M_n(\a)$ increases to $C\le \infty$. We claim that $C=\infty$ and the proposition is proved because of \eqref{eq:mean1}. We shall see that the assumption $C<\infty$ leads to a contradiction.
We choose a sufficiently small $\delta>0$ so that
$$
\frac{\a}{2}(C-\delta) > C+\delta,
$$ 
and next $n_0\in\N$ so that $M_n(\a)>C-\delta$ for $n\ge n_0$. We then get
$$
p_{n+1}(\a)\ge \frac{\a}{2} M_n(\a)>\frac{\a}{2}(C-\delta)>C+\delta,\quad n\ge n_0,
$$
which leads to a contradiction, since it implies that  $M_n(\a)$ will  eventually exceed $C$.

\medskip
{\bf The case $1<\a\le 2$.}

We proceed in steps:

{\bf Step 1: The sequence $(p_n(\a))_{n\ge 0}$ is unbounded.}

Assume for contradiction that $p_n(\a)\le B<\infty$ for all $n$. Then also $M_n(\a)\le B$, and since 
$p_n(\a)\ge p_n(1)\ge 1/e$ by Proposition~\ref{thm:p-bounds}-$(i,\,v,\,vi)$, we also get $M_n(\a)\ge 1/e$.

For a given $\a>1$ we choose the smallest $k_0\in \N$ so that
\begin{equation}\label{eq:ak0}
\a\frac{k_0+1}{k_0+2}>1
\end{equation}
and  $\varepsilon>0$ such that 
\begin{equation*} 
\a\frac{k_0+1}{k_0+2}>1+\varepsilon.
\end{equation*}

From \eqref{eq:fund} we then get
\begin{eqnarray*}
p_{n+1}(\a)&> &(1+\varepsilon)M_n(\a)-\frac{\a}{n+1}\sum_{k=0}^{k_0-1}\left(\frac{k_0+1}{k_0+2}-\frac{k+1}{k+2}\right)B \\
&\ge & \left(1+\frac\varepsilon2\right)M_n(\a) +\frac\varepsilon{2e} -\frac{\a}{n+1}\sum_{k=0}^{k_0-1}\left(\frac{k_0+1}{k_0+2}-\frac{k+1}{k+2}\right)B, 
\end{eqnarray*}
and since the last term tends to 0 for $n\to\infty$, we get 
$$
p_{n+1}(\a) \ge  \left(1+\frac\varepsilon2\right)M_n(\a)>M_n(\a), \quad n\ge n_0,
$$
where $n_0\in\N$ is sufficiently large. Therefore $M_{n+1}(\a)>M_n(\a)$ for  $n\ge n_0$ and finally
$C:=\lim_{n\to\infty}M_n(\a)$ exists and $C\le B$.

Let now $\delta>0$ be so small that by \eqref{eq:ak0} 
\begin{equation*} 
\a\frac{k_0+1}{k_0+2}(C-\delta)>C+\delta,
\end{equation*}
and let $n_1>n_0$ be so large that $M_n(\a)>C-\delta$ for $n\ge n_1$.  By \eqref{eq:fund} we get for $n\ge \max(k_0,n_1)$
\begin{eqnarray*}
p_{n+1}(\a) &> &   \a\frac{k_0+1}{k_0+2}(C-\delta) -\frac{\a}{n+1}\sum_{k=0}^{k_0-1}\left(\frac{k_0+1}{k_0+2}-\frac{k+1}{k+2}\right)p_{n-k}(\a)\\
&>& C+\delta -\frac{\a}{n+1}\sum_{k=0}^{k_0-1}\left(\frac{k_0+1}{k_0+2}-\frac{k+1}{k+2}\right)B,
\end{eqnarray*}
hence
$$
p_{n+1}(\a)> C+\delta/2,\quad n\ge n_2,
$$
where $n_2$ is sufficiently large. Therefore $M_n(\a)$ will eventually be larger than $C$, which is a contradiction, and we have proved Step 1.

\medskip
{\bf Step 2: The sequence $(M_n(\a))_{n\ge 0}$ is eventually strictly increasing.}

Once this is proved, we know that $\lim_{n\to\infty} M_n(\a)=\infty$ for otherwise $(M_n(\a))$ is a bounded
sequence, and so is $(p_n(\a))$ by \eqref{eq:mean1}, and this contradicts Step 1.

To see Step 2 we note that by Step 1 there exist indices $n_1<n_2<\cdots$ such that $(p_{n_j+1}(\a))_{ j\ge 1}$ is strictly increasing to infinity.  By \eqref{eq:mean1} we get that 
$$
\lim_{j\to\infty} M_{n_j}(\a)=\infty.
$$
We now use that $1<\a\le 2$ and hence $p_n(\a)\le n$ for $n\ge 1$ by Proposition~\ref{thm:p-bounds}-$(iii)$. From \eqref{eq:fund} we then get for $n\ge k_0$
\begin{eqnarray}\label{eq:fundl}
p_{n+1}(\a) &\ge& \a\frac{k_0+1}{k_0+2}M_n(\a)-\frac{\a}{n+1}\sum_{k=0}^{k_0-1}\left(\frac{k_0+1}{k_0+2}-\frac{k+1}{k+2}\right)(n-k)\nonumber\\
&>&  \a\frac{k_0+1}{k_0+2}M_n(\a)-\a\sum_{k=0}^{k_0-1}\left(\frac{k_0+1}{k_0+2}-\frac{k+1}{k+2}\right).
\end{eqnarray}
Since $M_{n_j}(\a)\to\infty$, there exists $j$  so that for $\tilde{n}:=n_j\ge k_0$
\begin{equation}\label{eq:interm}
\a\frac{k_0+1}{k_0+2}M_{\tilde{n}}(\a)-\a\sum_{k=0}^{k_0-1}\left(\frac{k_0+1}{k_0+2}-\frac{k+1}{k+2}\right)>M_{\tilde{n}}(\a).
\end{equation}
This gives $p_{\tilde{n}+1}(\a)>M_{\tilde{n}}(\a)$ and hence
 $M_{\tilde{n}+1}(\a)>M_{\tilde{n}}(\a)$.

We prove now by induction that $(M_k(\a))$ is strictly increasing for $k\ge \tilde{n}$. We have just established the start of the induction proof.

Assume that for some $k\in\N$
$$
M_{\tilde{n}}(\a)< M_{\tilde{n}+1}(\a)<\cdots < M_{\tilde{n}+k}(\a).
$$
By \eqref{eq:fundl} we have
\begin{eqnarray*}
p_{\tilde{n}+k+1}(\a)  &>&  \a\frac{k_0+1}{k_0+2}M_{\tilde{n}+k}(\a)-\a\sum_{k=0}^{k_0-1}\left(\frac{k_0+1}{k_0+2}-\frac{k+1}{k+2}\right)\\
&=& M_{\tilde{n}+k}(\a)\left[\a\frac{k_0+1}{k_0+2} - \frac{\a}{M_{\tilde{n}+k}(\a)}
\sum_{k=0}^{k_0-1}\left(\frac{k_0+1}{k_0+2}-\frac{k+1}{k+2}\right)\right]\\
&>&M_{\tilde{n}+k}(\a)\left[\a\frac{k_0+1}{k_0+2} - \frac{\a}{M_{\tilde{n}}(\a)}
\sum_{k=0}^{k_0-1}\left(\frac{k_0+1}{k_0+2}-\frac{k+1}{k+2}\right)\right]\\
&>&M_{\tilde{n}+k}(\a),
 \end{eqnarray*}
where the last inequality follows from \eqref{eq:interm}.
\end{proof}

\begin{proof} [Proof of Proposition~\ref{thm:p-bounds}-(vii)]

We will only prove  $\lim_{n\to\infty} \frac{p_{n+1}(\a)}{p_n(\a)}=1$, then  $\lim_{n\to\infty} \root{n}\of {p_n(\a)}=1$ follows from \cite[Theorem 3.37]{R}.

For every $\a>0$, by \eqref{eq:pnp_pn}, 
 \begin{equation}\label{eq:liratio}
\liminf_{n\to\infty}\frac{p_{n+1}(\a)}{p_n(\a)}\geq1.
\end{equation}

For $0<\a\leq1$, we already know from Proposition \ref{thm:p-bounds}-$(v)$ that 
$\frac{p_{n+1}(\a)}{p_n(\a)}<1$ and then  
 $$
\lim_{n\to\infty}\frac{p_{n+1}(\a)}{p_n(\a)}=1.
$$

By \eqref{eq:mean2} and  \eqref{eq:mean1} we get
  \begin{equation}\label{eq:Mto1}
\frac{M_n(\a)}{M_{n-1}(\a)}=\frac{n}{n+1}+\frac{p_n(\a)}{(n+1)M_{n-1}(\a)}\leq \frac{n}{n+1}+\frac{\a}{(n+1)}.
\end{equation}

For $\a\geq2$, by  \eqref{eq:mean1} and Property $(v)$ in Proposition~\ref{thm:p-bounds} we have 
\begin{equation}\label{eq:pleqM}
p_{k}(\a)\leq p_{n+1}(\a)\leq\a M_n (\a) \qquad\text{for every $k\leq n$}.
\end{equation}

By  \eqref{eq:mean1} and \eqref{eq:fund}, 
\begin{eqnarray*}
\frac{p_{n+2}(\a)}{p_{n+1}(\a)}&\leq&\frac{\a M_{n+1}(\a)}{\a\frac{k_0+1}{k_0+2}M_n(\a)-\frac{\a}{n+1}\sum_{k=0}^{k_0-1}\left(\frac{k_0+1}{k_0+2}-\frac{k+1}{k+2}\right)p_{n-k}(\a)}
\\
&=&\frac{ M_{n+1}(\a)}{ M_{n}(\a)} \cdot\frac1{\frac{k_0+1}{k_0+2}-\frac{1}{n+1}\sum_{k=0}^{k_0-1}\left(\frac{k_0+1}{k_0+2}-\frac{k+1}{k+2}\right)\frac{p_{n-k}(\a)}{M_n(\a)}}\,.
\end{eqnarray*}
For a given $k_0$ the sum in the  denominator is bounded because of \eqref{eq:pleqM}. Then we obtain, using 
\eqref{eq:Mto1}, 
$$
\limsup_{n\to\infty}\frac{p_{n+2}(\a)}{p_{n+1}(\a)}\leq\frac{k_0+2}{k_0+1}.
$$
Since $k_0$ is any integer, along with \eqref{eq:liratio}, this proves that 
$$
\lim_{n\to\infty}\frac{p_{n+1}(\a)}{p_{n}(\a)}=1.
$$

 For $1<\a\leq2$ we use \eqref{eq:fundl} instead of \eqref{eq:fund}, to obtain 
\begin{eqnarray} 
\nonumber\frac{p_{n+2}(\a)}{p_{n+1}(\a)}&\leq&\frac{\a M_{n+1}(\a)}{ \a\frac{k_0+1}{k_0+2}M_n(\a)-\a\sum_{k=0}^{k_0-1}\left(\frac{k_0+1}{k_0+2}-\frac{k+1}{k+2}\right)}
\\\nonumber
&=&\frac{ M_{n+1}(\a)}{ M_{n}(\a)}\cdot\frac1{\frac{k_0+1}{k_0+2}-\frac{1}{M_{n}(\a)}\sum_{k=0}^{k_0-1}\left(\frac{k_0+1}{k_0+2}-\frac{k+1}{k+2}\right)}\,.
\end{eqnarray}
Again, for a fixed $k_0$ the sum in the denominator is bounded and since $M_{n}(\a)\to\infty$ we conclude as above.
\end{proof}

\begin{proof}[Proof of Theorem \ref{thm:alt}]
In the case $0\le\a\le1$, Equation \eqref{eq:ratbdd}  follows immediately from  Proposition~\ref{thm:p-bounds}-$(v)$.

For $2\le\a$, using that  $p_n(\a)$ is increasing by  Proposition~\ref{thm:p-bounds}-$(v)$ we get
	$$
	\frac{p_{n+1}(\a)}{p_n(\a)}=\frac{\a}{n+1}\sum_{k=0}^n \frac{k+1}{k+2} \frac{p_{n-k}(\a)}{p_n(\a)}
	\le \frac{\a}{n+1}\sum_{k=0}^n \frac{k+1}{k+2}\le \a.
	$$

Finally, in the case $1<\a<2$, Equation \eqref{eq:ratbdd} can be obtained combining \eqref{eq:Mto1} with  \eqref{eq:mean1}: 
$$\frac{p_{n+1}(\a)}{p_{n}(\a)}\leq 2\frac{ M_{n}(\a)}{ M_{n-1}(\a)}\leq 2\frac {n+\a}{n+1}\leq 2\a,\qquad n\ge1,$$
while $\frac{p_{1}(\a)}{p_{0}(\a)}=\a/2<2\a$.

Now, for $\a> 0, s>0$ and $n\ge 1$ we have
	$$
	p_{n+1}(\a)\frac{s^n}{n!}\le p_n(\a)\frac{s^{n-1}}{(n-1)!}
	$$
	if and only if
	$$
	\frac{p_{n+1}(\a)}{np_n(\a)}\le \frac{1}{s}.
	$$
	Since the left-hand side is $\le \widehat\a/n$ by Equation \eqref{eq:ratbdd},  we see that the power series \eqref{eq:ent1} satisfies the  Alternating Series Test for
	$n\ge \widehat\a s$.
\end{proof}

\begin{rem}
{\rm Property $(v)$ in Proposition \ref{thm:p-bounds} does not consider the case $1<\a<2$. 
From numerical calculations it seems true that $(p_n(\a))_{n\ge 1}$ is increasing  whenever $\a\ge 4/3$, which is when $p_1(\a)\le p_2(\a)$. For $1<\a<4/3$,  $p_n(\a)$ is decreasing for $0\le n\le n_0(\a)$ and increasing for $n_0(\a)\le n$, where $n_0(\a)\in\N$ is decreasing in $\a$. However, we have not been able to prove this.}
\end{rem}

\section{Proof of Theorem \ref{thm:final}}\label{sec:propphi}
In this section we prove several properties of the family of functions  $\varphi_\a$, that were listed in Theorem \ref{thm:final}

In the proof of Theorem~\ref{thm:final}-$(i,\,iii)$ we need the following lemma. The proof is left as an exercise.

\begin{lem}\label{thm:entire} Let
$$
f_j(s)=\sum_{n=0}^\infty a_{j,n}s^n,\quad f(s)=\sum_{n=0}^\infty a_{n}s^n, \quad s\in\C
$$
be power series of entire functions $f_j,j\in\N$ and $f$. Assume that for all $n\ge 0$
\begin{equation*} 
\lim_{j\to\infty} a_{j,n}=a_n,\quad |a_{j,n}|\le c_n,
\end{equation*}
where $\sum c_nR^n<\infty$ for all $R>0$.

Then $\lim_{j\to\infty} f_j(s)=f(s)$ uniformly for $s$ in compact subsets of the complex plane.
\end{lem}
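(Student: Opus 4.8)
The final statement is Lemma~\ref{thm:entire}: a Vitali/Weierstrass-type result on convergence of entire functions given coefficientwise convergence with a uniform summable majorant. Let me sketch how I would prove it.

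The plan is to prove this by a standard uniform-approximation argument, splitting the power series into a finite head and an infinite tail and controlling each separately. Fixing an arbitrary $R>0$, it suffices to establish uniform convergence on the closed disc $\{s\in\C:|s|\le R\}$, since every compact subset of $\C$ is contained in such a disc. For $|s|\le R$ the triangle inequality gives
$$
|f_j(s)-f(s)|\le \sum_{n=0}^\infty |a_{j,n}-a_n|\,R^n,
$$
so the whole argument reduces to showing that this majorizing series can be made small, uniformly in $s$, once $j$ is large.

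The first preliminary observation is that $|a_n|\le c_n$ as well: taking $j\to\infty$ in the hypothesis $|a_{j,n}|\le c_n$ and using $a_n=\lim_{j\to\infty}a_{j,n}$ yields the bound on the limit coefficients. Consequently $|a_{j,n}-a_n|\le 2c_n$ for all $j$ and all $n$. Now, given $\varepsilon>0$, I would first use the assumption $\sum_{n=0}^\infty c_nR^n<\infty$ to choose $N$ so large that the tail satisfies
$$
\sum_{n=N+1}^\infty |a_{j,n}-a_n|\,R^n\le 2\sum_{n=N+1}^\infty c_nR^n<\frac{\varepsilon}{2}
$$
for every $j$; note that this bound is uniform in both $j$ and $s$, which is precisely where the summable majorant does its work.

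For the remaining finite head $\sum_{n=0}^{N}|a_{j,n}-a_n|R^n$, there are only finitely many terms, and for each fixed $n\le N$ the coefficientwise convergence $a_{j,n}\to a_n$ lets me pick $J_n$ with $|a_{j,n}-a_n|R^n<\varepsilon/(2(N+1))$ for $j\ge J_n$. Taking $J=\max_{0\le n\le N}J_n$, the head is $<\varepsilon/2$ for all $j\ge J$, again uniformly in $s$ since the coefficients do not depend on $s$. Adding the two estimates gives $|f_j(s)-f(s)|<\varepsilon$ for all $j\ge J$ and all $|s|\le R$, which is exactly the claimed uniform convergence on compact sets.

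There is no genuine obstacle here: this is a routine $\varepsilon/2$ estimate, which is why the paper defers it as an exercise. The only point requiring a moment's care is the derivation of $|a_n|\le c_n$ from the bound on the approximating coefficients, since without it one could not dominate $|a_{j,n}-a_n|$ by a $j$-independent summable sequence; everything else is the usual separation of a uniformly convergent tail from a finite sum of pointwise-convergent terms.
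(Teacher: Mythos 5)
Your proof is correct, and since the paper explicitly leaves this lemma as an exercise, your standard $\varepsilon/2$ argument---bounding $|f_j(s)-f(s)|$ on $|s|\le R$ by $\sum_n|a_{j,n}-a_n|R^n$, dominating the tail uniformly in $j$ via $|a_{j,n}-a_n|\le 2c_n$ (after deducing $|a_n|\le c_n$ in the limit), and handling the finite head by coefficientwise convergence---is precisely the intended argument. Nothing is missing; the key observation that the majorant passes to the limit coefficients is correctly identified and justified.
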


\begin{proof}[Proof of Theorem~\ref{thm:final}-(i,\ ii)]
 We use that 
$$
\lim_{\a\to 0}\frac{p_{n+1}(\a)}{\a}=\frac{1}{n+2},\qquad \left|\frac{p_{n+1}(\a)}{\a}\right|\le 1,\quad 0<|\a|\le 1.
$$ 
The first assertion follows from Proposition \ref{thm:p-bounds}-$(i)$, and the second assertion follows from Proposition \ref{thm:p-bounds}-$(ii,\,iii)$ together with \eqref{eq:poly-rec}. 

Lemma~\ref{thm:entire} now shows that
$$
\lim_{\a\to 0} \frac{\varphi_\a(s)}{\a e^\a}=\sum_{n=0}^\infty\frac{(-1)^n}{n+2}\frac{s^n}{n!}
$$
uniformly for $s$ in compact subsets of $\C$. It is easy to see that the sum of this power series is equal to the function
$$
w(s)=\left\{ \begin{array}{ll}\displaystyle \frac{1-(1+s)e^{-s}}{s^2},\, &\mbox{when}\, s\neq 0, \\
\frac12,\,&\mbox{when}\, s=0.
\end{array}\right.
$$
The zeros  of $w$ are given by $1+s=e^s$, which has no real solutions different from 0. The equation $1+s=e^s$ has countably many non-real solutions, which can be  given using the branches of the Lambert $W$ function, available in Maple. For each $k\in\Z$ the $k$'th branch is denoted $W(k,z)$ and satisfies $W(k,z)\exp(W(k,z))=z$. It follows that the solutions to $1+s=e^s$ are $s=\xi_k=-W(k,-1/e)-1, k\in\Z$, but $\xi_{-1}=\xi_0=0$ and the other values given are calculated in Maple. 

Assertion $(ii)$ now  follows from Hurwitz' Theorem. 
\end{proof}
\begin{proof}[Proof of Theorem~\ref{thm:final}-$(iii,\,iv)$]
For simplicity we introduce
\begin{equation}\label{eq:phitil}
\widetilde{\varphi}_\a(s)=\exp(-\a)\varphi_\a(s)=\sum_{n=0}^\infty (-1)^np_{n+1}(\a)\frac{s^n}{n!}.
\end{equation}
Note that 
$$
\lim_{|\a|\to\infty}\frac{p_{n}(\a)}{\a^n}=\frac{1}{2^nn!}, \qquad \left|\frac{p_{n}(\a)}{\a^n}\right|\le 1,\quad \mbox{for}\ 1\le |\a|,
$$
by Proposition \ref{thm:p-bounds}-$(i,\, ii,\  iii)$. Lemma~\ref{thm:entire} therefore shows that
$$
\lim_{|\a|\to\infty}\frac{\widetilde{\varphi}_\a(s/\a)}{\a}=\sum_{n=0}^\infty(-1)^n\frac{s^n}{2^{n+1}n! (n+1)!}
$$
uniformly for $s$ in compact subsets of the complex plane.

The Bessel function of order 1 is defined by the series
$$
J_1(z)=\frac{z}{2}\sum_{n=0}^\infty (-1)^n\frac{(z/2)^{2n}}{n!(n+1)!}
$$
and hence
\begin{equation}\label{eq:zer}
\lim_{|\a|\to\infty}\frac{\widetilde{\varphi}_\a(s/\a)}{\a}=\frac{J_1(\sqrt{2s})}{\sqrt{2s}}.
\end{equation}
The zeros of $J_1$ are all real and simple and equal to $0,\pm j_1,\pm j_2,\ldots$, where $0<j_1<j_2<\ldots$ is a well-known sequence of positive numbers tending to infinity.

The zeros of the right-hand side of \eqref{eq:zer} are $j_k^2/2$.
 In a sufficiently small disc $D_k$ centered at $j_k^2/2$, $\widetilde{\varphi}_{\a}(s/\a)$ has a unique zero $s_k(\a)$, when $|\a|$ is sufficiently large. It is simple and we have
$$
\lim_{|\a|\to\infty} \a s_k(\a)=\frac{j_k^2}{2}.
$$
This is according to a theorem of Hurwitz. If $\a>0$ the complex zeros of $\varphi_{\a}$ must appear in conjugate pairs, and therefore $s_k(\a)$ must be real and hence positive for otherwise $D_k$ would contain two zeros, when $\a>0$ is sufficiently large. 
\end{proof}
\begin{proof}[Proof of Theorem~\ref{thm:final}-(v)] The order and type does not change when we multiply an entire function by a constant, then we may work with $\widetilde{\varphi}_\a$ as in \eqref{eq:phitil}. Defining 
\begin{equation*} 
c_n=\frac{p_{n+1}(\a)}{n!},
\end{equation*}
it is known, cf. e.g. \cite{Bo}, that the order $\rho$ of $\varphi_\a$ is
$$
\rho=\limsup_{n\to\infty}\frac{\log n}{\log(1/\root{n}\of{c_n})},
$$ 
but since
$$
\frac{1}{\root{n}\of{c_n}}=\frac{\root{n}\of{n!}}{\root{n}\of{p_{n+1}(\a)}}\sim\frac{n}{e}
$$
by Proposition~\ref{thm:p-bounds}-$(vii)$ and Stirling's formula, we get
$$
\frac{\log n}{\log(1/\root{n}\of{c_n})}=\left(1+\frac{\log(e/(n\root{n}\of{c_n}))-1}{\log n}\right)^{-1},
$$
which converges to 1, hence $\rho=1$.

The type $\tau$ is given by
$$
\tau=\frac{1}{e}\limsup_{n\to\infty}(n\root{n}\of{c_n}),
$$
but since $\lim(n\root{n}\of{c_n})=e$, we get $\tau=1$.
\end{proof}

\section{Appendix}
In this appendix we add a few remarks that came up after the submission of this work.
\begin{rem}\label{thm:referee} {\rm 
A referee has kindly pointed out that the coefficients $c_{n,k}$ of the polynomials $p_n(\a)$ can be expressed by the following formula
\begin{equation}\label{eq:referee1}
c_{n,k}=(-1)^{n-k}\sum_{m=1}^k (-1)^m\frac{s(n+m,m)}{(n+m)!(k-m)!},\quad n\ge k\ge 1,
\end{equation}
where the $s(p,m)$ are the Stirling numbers of the first kind defined by 
$$
t(t-1)\cdots(t-p+1)=\sum_{m=0}^p s(p,m)t^m,\quad p\ge 1,
$$
 $s(0,0):=1$, see \cite[p.278]{Ch}. Note that $s(p,0)=0$ for $p\ge 1$, so in \eqref{eq:referee1} one may sum from $m=0$ as well.
To see \eqref{eq:referee1} we use the formula
$$
B_n(a_1,\ldots,a_n)=\sum_{k=1}^n B_{n,k}(a_1,\ldots,a_{n-k+1}),
$$
where the partial Bell partition polynomials $B_{n,k}$ are defined as
$$
B_{n,k}(a_1,\ldots,a_{n-k+1})=\sum_{J(n,k)}\frac{n!}{j_1!\cdots j_{n-k+1}!}\prod_{m=1}^{n-k+1}\left(\frac{a_m}{m!}\right)^{j_m},
$$
cf. \cite[Section 11.2]{Ch}. The sum is over the set $J(n,k)$ of all integers $j_1,\ldots,j_{n-k+1}\ge 0$ satisfying
$$
j_1+\cdots +j_{n-k+1}=k,\quad j_1+2j_2+\cdots +(n-k+1)j_{n-k+1}=n.
$$
In the special case $a_k=(-1)^k\a k!/(k+1),k=1,\ldots,n$ we then get
\begin{eqnarray*}
&&B_{n,k}\left(-\a\frac{1!}{2},\a\frac{2!}{3},\ldots,(-1)^{n-k+1}\a\frac{(n-k+1)!}{n-k+2}\right)\\
&=&\sum_{J(n,k)}\frac{n!}{j_1!\cdots j_{n-k+1}!}\prod_{m=1}^{n-k+1}\left(\frac{(-1)^m\a}{m+1}\right)^{j_m}\\
&=&\a^k(-1)^nB_{n,k}\left(\frac{1!}{2},\frac{2!}{3},\ldots,\frac{(n-k+1)!}{n-k+2}\right).
\end{eqnarray*}
In \cite[Theorem 1]{Q} one finds the evaluation
\begin{equation}\label{eq:Bnk}
B_{n,k}\left(\frac{1!}{2},\frac{2!}{3},\ldots,\frac{(n-k+1)!}{n-k+2}\right)=
(-1)^{n-k}n!\sum_{m=1}^k\frac{(-1)^ms(n+m,m)}{(n+m)!(k-m)!},
\end{equation}
and hence by \eqref{eq:pnBn}
\begin{eqnarray*}
p_n(\a)=\frac{(-1)^n}{n!}\sum_{k=1}^n \a^k(-1)^n(-1)^{n-k}n!\sum_{m=1}^k\frac{(-1)^ms(n+m,m)}{(n+m)!(k-m)!},
\end{eqnarray*}
and finally one obtains \eqref{eq:referee1}.

We observe that, from \eqref{eq:referee1}, it is possible to deduce the explicit formula for $c_{n,1}$ given in Proposition~\ref{thm:p-bounds}-$(i)$, using that $(-1)^ns(n+1,1)=n!$, and also, since  $s(n,2)=(-1)^n(n-1)! H_{n-1}$ with $H_n=1+1/2+...+1/n$ being the $n^{th}$ harmonic number,  to obtain the following  formula for $c_{n,2}$: $$
c_{n,2}=\frac{H_{n+1}}{n+2}-\frac{1}{n+1}.$$ 
Further formulas can be obtained in terms of generalized harmonic numbers but they become increasingly more complicated.

Also the explicit formula for $c_{n,n}$ given in Proposition~\ref{thm:p-bounds}-$(i)$ can be obtained, by using  \eqref{eq:Bnk} and the definition of $B_{n,n}$:
$$c_{n,n}=\sum_{m=1}^n\frac{(-1)^m s(n+m,m)}{(n+m)!(n-m)!}=\frac1{n!}B_{n,n}\left(\frac12\right)=\frac1{n!}\left(\frac12\right)^n.$$

}
\end{rem}

\begin{rem}\label{thm:n=-1}{\rm Alan Sokal asked the first author if Theorem~\ref{thm:Haus} can be replaced by the stronger statement that $(p_n(\a))_{n\ge 0}$ is a Hausdorff moment sequence when $0\le \a\le 1$. The answer is yes, but the reader is warned that Equations \eqref{eq:Haus1} and \eqref{eq:Haus2} do not hold for $n=-1$.

In fact, if $0<\a<1$ we get for $n=-1$ 
\begin{eqnarray*}
&&\frac{e^{-\a}}{\pi}\int_0^1 (x/(1-x))^{\a x}\sin(\a\pi x) x^{-1}\,dx =e^{-\a}\lim_{x\to 0^+}f_\a(x)\\
&=&e^{-\a}(e^{\a}-1)=1-e^{-\a}<1=p_0(\a),
\end{eqnarray*}
where we have used \eqref{eq:St2} and \eqref{eq:fa(0)}, and there is a similar calculation  in case $\a=1$. Using the Hausdorff moment sequence $(\delta_{n0})_{n\geq0}=(1,0,0,0,\ldots)$ we find for $0<\a<1$
\begin{equation*} 
p_n(\a)=e^{-\a}\delta_{n0}+\frac{e^{-\a}}{\pi}\int_0^1 (x/(1-x))^{\a x}\frac{\sin(\a\pi x)}{x}x^n\,dx,\quad n\ge 0,
\end{equation*}
showing that $(p_n(\a))_{n\ge 0}$ is a Hausdorff moment sequence when $0<\a<1$.
We  similarly get $p_n(0)=\delta_{n0}$ and
$$
p_n(1)=e^{-1}\delta_{n0}+e^{-1}+\frac{e^{-1}}{\pi}\int_0^1 (x/(1-x))^{x} \frac{\sin(\pi x)}{x}x^n\,dx,\quad n\ge 0.
$$ 
}
\end{rem}

\FloatBarrier

\par\bigskip

\begin{center}
{\bf Acknowledgments}
\end{center}

This work was initiated during a visit of the first author to the Department of Mathematics at the  University of S\~ao Paulo in S\~ao Carlos, Brazil, in March 2018. He wants to thank the Department for generous support and hospitality during his stay.

The second author was  supported by: grant $\#$303447/2017-6, CNPq/Brazil. 
	
The third author was supported by: grant $\#$2016/09906-0, S\~ao Paulo Research Foundation (FAPESP).

The authors thank a referee for useful references leading in particular to Remark~\ref{thm:referee}.

\noindent
Christian Berg\\
Department of Mathematical Sciences, University of Copenhagen\\
Universitetsparken 5, DK-2100, Denmark\\
e-mail: {\tt{berg@math.ku.dk}}

\vspace{0.4cm}
\noindent
Eugenio Massa\\
Departamento de Matem{\'a}tica, ICMC-USP-S{\~a}o  Carlos\\
Caixa Postal 668, 13560-970 S{\~a}o Carlos SP, Brazil\\
e-mail: {\tt{eug.massa@gmail.com}} 

\vspace{0.4cm}
\noindent
Ana P. Peron\\
Departamento de Matem{\'a}tica, ICMC-USP-S{\~a}o  Carlos\\
Caixa Postal 668, 13560-970 S{\~a}o Carlos SP, Brazil\\
e-mail: {\tt{apperon@icmc.usp.br}} 

\end{document}